\title{Exploiting Sparsity for Semi-Algebraic Set Volume Computation\footnote{This work was partly funded by the RTE company and by the ERC Advanced Grant Taming.}}
\newtheorem{thm}{Theorem}
\newtheorem{lem}[thm]{Lemma}
\newtheorem{cor}[thm]{Corollary}
\newtheorem{asm}[thm]{Assumption}
\theoremstyle{definition}
\newtheorem{defn}[thm]{Definition}
\newtheorem{rqe}[thm]{Remark}
\newcommand{\R}{\mathbb{R}}
\newcommand{\N}{\mathbb{N}}
\newcommand{\mcK}{\mathcal{K}}
\newcommand{\mcE}{\mathcal{E}}
\newcommand{\mcF}{\mathcal{F}}
\newcommand{\mcT}{\mathcal{T}}
\newcommand{\mcC}{\mathscr{C}}
\newcommand{\mcM}{\mathscr{M}}
\newcommand{\mcD}{\mathcal{D}}
\newcommand{\mcO}{\mathbf{O}}
\newcommand{\mcP}{\mathcal{P}}
\newcommand{\mbA}{\mathbf{A}}
\newcommand{\mbX}{\mathbf{X}}
\newcommand{\mbY}{\mathbf{Y}}
\newcommand{\mbZ}{\mathbf{Z}}
\newcommand{\mbB}{\mathbf{B}}
\newcommand{\mbP}{\mathbf{P}}
\newcommand{\mbR}{\mathbf{R}}
\newcommand{\mbM}{\mathbf{M}}
\newcommand{\mbU}{\mathbf{U}}
\newcommand{\mbV}{\mathbf{V}}
\newcommand{\mbK}{\mathbf{K}}
\newcommand{\mbx}{\mathbf{x}}
\newcommand{\mbe}{\mathbf{e}}
\newcommand{\mbz}{\mathbf{z}}
\newcommand{\mby}{\mathbf{y}}
\newcommand{\mbv}{\mathbf{v}}
\newcommand{\mbw}{\mathbf{w}}
\newcommand{\mbp}{\mathbf{p}}
\newcommand{\mbm}{\mathbf{m}}
\newcommand{\mbn}{\mathbf{n}}
\newcommand{\mbh}{\mathbf{h}}
\newcommand{\mbg}{\mathbf{g}}
\newcommand{\vol}{\mathrm{vol}}
\newcommand{\spt}{\mathrm{spt}}
\newcommand{\ind}{\mathbb{1}}
\newcommand{\argmax}{\mathrm{argmax}}
\newcommand{\grad}{\mathbf{grad}\:}
\newcommand{\dv}{\mathrm{div}\:}
\tikzstyle{decision} = [diamond, draw, fill=blue!20, 
\tikzstyle{block} = [rectangle, draw, text centered, rounded corners, minimum height=1em]
\tikzstyle{init} = [rectangle, draw,
\tikzstyle{line} = [draw, -latex']
\tikzstyle{cloud} = [draw, ellipse,fill=red!20, text width = 5 em, text badly centered, node distance=5.5cm,
\definecolor{qqttcc}{rgb}{0.,0.2,0.8}
\definecolor{ffqqqq}{rgb}{1.,0.,0.}
\definecolor{fleche}{rgb}{0.0,0.5,0.0}
\definecolor{ffffff}{rgb}{1.,1.,1.}
\definecolor{qqqqff}{rgb}{0.,0.,1.}
\definecolor{ttzzqq}{rgb}{0.2,0.6,0.}
\begin{document}


\author{M. \textsc{Tacchi}$^{1,2}$ \and T. \textsc{Weisser}$^3$ \and J. B. \textsc{Lasserre}$^{1,4}$ \and D. \textsc{Henrion}$^{1,5}$}

\footnotetext[1]{LAAS-CNRS, Universit\'e de Toulouse, France.}
\footnotetext[2]{R\'eseau de Transport d'Electricit\'e (RTE), France.}
\footnotetext[3]{Theoretical Division (T-5/CNLS), Los Alamos National Laboratory, Los Alamos, USA.}
\footnotetext[4]{Institut de Math\'ematiques de Toulouse,  Universit\'e de Toulouse, France.}
\footnotetext[5]{Faculty of Electrical Engineering, Czech Technical University in Prague, Czechia.}

\maketitle

\begin{abstract}
We provide a systematic deterministic numerical scheme
to approximate the volume (i.e. the Lebesgue measure) of a basic semi-algebraic set
whose description follows a {correlative} sparsity pattern. As in previous works (without sparsity), the underlying strategy 
is to consider an infinite-dimensional linear program on measures
whose optimal value is the volume of the set. This is a particular instance of a generalized moment problem which in turn can be approximated as closely as desired by solving a hierarchy of semidefinite relaxations of increasing size. The novelty 
with respect to previous work is that by exploiting the sparsity pattern  we can provide a
sparse formulation for which the associated semidefinite relaxations are of much smaller size.
In addition, we can decompose the sparse relaxations into completely decoupled
subproblems of smaller size, and in some cases computations can be done
in parallel. To the best of our knowledge, it is the first  contribution that exploits {correlative} sparsity for volume computation of semi-algebraic sets which are possibly high-dimensional and/or non-convex and/or non-connected.

\end{abstract}

\section{Introduction}

This paper is in the line of research concerned with computing
approximations of the volume (i.e. the Lebesgue measure) of a given compact basic semi-algebraic set 
$\mbK$ of $\R^n$ neither necessarily convex nor connected. 
Computing or even approximating the volume of a convex body
is hard theoretically and in practice as well. Even if $\mbK$ is a convex polytope,
exact computation of its volume or integration over $\mbK$ is a computational challenge.
Computational complexity of these problems is discussed in, e.g. \cite{bollobas,dyer1,elekes}.
In particular, any deterministic algorithm with polynomial-time complexity
that would compute an upper bound and a lower bound  on the volume cannot yield 
an estimate on the bound ratio better than polynomial in
the dimension $n$. For more detail, the interested reader 
is referred to the discussion in  \cite{sirev} and to  \cite{bueler}
for a comparison.

If one accepts randomized algorithms that fail with small probability, then the situation
is more favorable. Indeed, the
probabilistic
approximation algorithm of \cite{dyer2}  computes the volume to fixed 
arbitrary relative precision $\epsilon>0$  in time polynomial in $1/\epsilon$. The algorithm uses approximation schemes based on rapidly mixing Markov chains and isoperimetric inequalities, see also 
hit-and-run algorithms described in e.g. \cite{belisle1,smith1,smith2}.
So far, it seems that the recent work \cite{vempala} has provided the best algorithm of this type.

\subsection*{The moment approach for volume computation}
In \cite{sirev} a general deterministic methodology 
was proposed for approximating  the volume of a compact basic semi-algebraic set $\mbK$, not necessarily convex or connected. 
It was another illustration of the versatility of the 
so-called  \emph{moment-SOS (sums of squares) hierarchy} developed in \cite{lass-book1} for solving the Generalized Moment
Problem (GMP) with polynomial data.

Briefly, the underlying 
idea is to view the volume as the optimal value of a GMP, i.e., an infinite-dimensional Linear Program (LP)
on an appropriate space of finite Borel measures. Then one may approximate the value from above by solving a hierarchy of \emph{semidefinite programming (SDP) relaxations} with associated sequence of optimal values indexed by an integer $d$. 
Monotone convergence of the bounds is guaranteed when $d$ increases. Extensions to more general measures
and possibly non-compact sets were then proposed in \cite{aam}.
The order $d$ in the hierarchy encodes the amount of information  that is used, namely the number of moments of the Lebesgue measure up to degree $d$.  It is a crucial factor for the size of the associated SDP problem. More precisely, the size grows in $d^n$ as $d$ increases, which so far limits its application to sets of small dimension $n$, typically up to $4$ or $5$.

In view of the growth of the size of the SDP problem  with increasing order it is desirable to converge
quickly towards the optimal value of the LP. However, this convergence is expected to be slow in general. 
One reason becomes clear when looking at the dual LP
which attempts to compute a polynomial approximation (from above) of the (discontinuous)  indicator function of the set $\mbK$. Hence one is then faced with a typical Gibbs effect\footnote{The Gibbs effect appears at a jump discontinuity when one approximates a piecewise continuously differentiable function with a continuous 
function, e.g. by its Fourier series.}, well-known in 
the theory of approximation. To overcome this drawback the authors in \cite{sirev} have proposed 
to use an alternative criterion for the LP, which results in a significantly faster
convergence. However in doing so, the monotonicity of the convergence
(a highly desirable feature to obtain a sequence of upper bounds) is lost. In the related work \cite{aam} the author
has proposed an alternative strategy which consists of strengthening the relaxations by incorporating additional linear moment constraints known to be satisfied at the optimal solution of the LP.  These constraints come from a specific application of Stokes' theorem. Remarkably, adding these \emph{Stokes constraints} results in a significantly faster convergence while keeping monotonicity.

\subsection*{Motivation}

The measure approach for the volume computation problem is intimately linked to the use of occupation measures, in dynamical systems theory, for computing the region of attraction (ROA) of a given target set. Indeed, in \cite{roa}, the problem of estimating the ROA is formulated as a GMP very similar to the volume computation problem. The idea is to maximize the volume of a set of initial conditions that yield trajectories ending in the target set after a given time.

This problem of estimating the ROA of a target set is crucial in power systems safety assessment, since the power grids must have good stability properties. Perturbations (such as short-circuits or unscheduled commutations) should be tackled before they get the system outside the ROA of the nominal operating domain. Currently the stability of power grids is estimated through numerous trajectory simulations that prove very expensive. In the wake of the energy transition, it is necessary to find new tools for estimating the stability of power systems. The conservative, geometric characterization of the region of attraction as formulated in \cite{roa} is a very promising approach for this domain of application, see \cite{josz}.

As in volume computation, the main limitation of this method is that only problems of modest dimension can be handled by current solvers. Exploiting sparsity seems to be the best approach to allow scalability both in volume computation and ROA estimation. Since volume estimation is a simpler instance of the GMP than ROA estimation, we decided to address first the former problem.

In addition, volume computation with respect to a measure satisfying some conditions (e.g. compactly supported or Gaussian measure) also has applications in the fields of geometry and probability computation, which is the reason why many algorithms were already proposed for volume computation of convex polytopes and convex bodies in general.

\subsection*{Contribution}

We design deterministic methods that provide
approximations with strong asymptotic guarantees of convergence to the volume of $\mbK$.
The methodology that we propose is similar in spirit to the one initially developed in \cite{sirev}
as described above and its extension to non-compact sets and 
Gaussian measures of \cite{aam}. However it is not a straightforward or direct extension of \cite{sirev} or \cite{aam}, and it has the following important distinguishing features:

(i) It can handle sets $\mbK\subset\R^n$ of potentially
large dimension $n$ provided that some sparsity pattern {(namely: correlative sparsity, see section \ref{prelimpattern} as well as \cite{waki, diego} for details)} is present in the description of $\mbK$.
This is in sharp contrast with \cite{sirev}.

(ii) The computation of upper and lower bounds can
be decomposed into smaller independent problems of the same type, and  
depending on the sparsity pattern, some of the computations can even be done in parallel. 
This fact alone  is remarkable and unexpected.

To the best of our knowledge, this is the first deterministic method for volume computation that takes benefit from a {correlative} sparsity pattern in the description of $\mbK$ in the two directions
of (a) decomposition into problems of smaller size and (b) parallel computation. {Of course this sharp improvement is performed at some price: our framework only works on semi-algebraic sets that present the appropriate correlative sparsity pattern (see Assumption \ref{asm} as well as section \ref{secdip} and appendix \ref{DIP} for detailed discussion on its applicability).}

The key idea is to provide a new and very specific \emph{sparse formulation} of the original problem
in which one defines a set of marginal measures whose (small dimensional) support is in accordance with the {correlative} sparsity pattern present in the description of the set $\mbK$. However, those
marginal measures are not similar to the ones used in the sparse formulation \cite{siam-sparse}
of polynomial optimization problems 
over the same set $\mbK$. Indeed they are not expected to satisfy the consistency
condition of \cite{siam-sparse}\footnote{If two measures share some variables then the consistency condition requires that their respective marginals coincide.}.

Finally, in principle, our floating point volume computation in large dimension $n$ is faced with a crucial numerical issue.
Indeed as in Monte-Carlo methods, up to rescaling, one has to include
the set $\mbK$ into a box $\mbB$ of unit volume. Therefore the volume of $\mbK$ is of the order $\epsilon^n$ for some $\epsilon \in (0,1)$ and therefore far beyond machine precision
as soon as $n$ is large.  To handle this critical issue we develop 
a \emph{sparsity-adapted rescaling} which allows us to compute very small volumes in potentially very high dimension with good precision.

\subsection*{A motivating example}

Consider the following set
{
$$ \mbK := \{ \mbx \in [0,1]^{100} \; : \;   x_i x_{i+1}  \leq 1/2, \: i=1,\ldots,99\}. $$
}
This is a {\em high-dimensional non-convex sparse semi-algebraic set}.  The precise definition of a {sparse semi-algebraic set} will be given later on, but so far notice that in the description of $\mbK$ each constraint involves only $2$ variables out of $100$.
The volume of $\mbK$ is hard to compute, but thanks to the structured description of the set we are able to prove numerically that its volume is smaller than {$2\cdot 10^{-5}$ in less than 2 minutes on a standard computer.}

For this we have implemented a specific version of the moment-SOS hierarchy of 
SDP relaxations to solve the GMP, in which we exploit the {correlative} sparsity pattern of the set $\mbK$. The basic idea is to replace the original GMP that involves an unknown measure on $\R^{100}$ (whose SDP relaxations are hence untractable)
with a GMP involving $99$ measures on $\R^2$ (hence tractable). In addition, this new GMP 
can be solved either in one shot (with the $99$ unknown measures) or by solving sequentially $99$ GMPs involving (i) one measure on $\R^2$ and (ii) some data obtained from the GMP solved at previous step.
Our approach can be sketched as follows.

First, we rescale the problem so that the set $\mbK$ is included in the unit box $\mbB := [0,1]^n$ on which the moments of the Lebesgue measure are easily computed.

Next, we describe the volume problem on $\mbK$ as a chain of volume subproblems on the subspaces Im$(\pi_i)$ where $\pi_i(x_1,\dots,x_{100}) = (x_i,x_{i+1})$, with a link between 
the $i$-th and $(i+1)$-th sub-problems.

Finally, in this example, as $n=100$ and $\mbK\subset \mbB$, the  volume of $\mbK$ is very small and far below standard floating point machine precision.
To handle this numerical issue, we have implemented a sparsity-adapted strategy which consists of rescaling each subproblem defined on the projections of $\mbK$ to obtain 
intermediate values all with the same order of magnitude. Once all computations (involving quantities of the same order of magnitude) have been performed, the correct value of the volume is obtained  by a reverse scaling.

The sparse formulation stems from considering some measure marginals
appropriately defined according to the {correlative} sparsity pattern present
in the description of $\mbK$. It leads to a variety of algorithms to compute the volume of sparse semi-algebraic sets.

The outline of the paper is as follows. In Section \ref{cylinder} we describe briefly the moment-SOS hierarchy for semi-algebraic set volume approximation, as well as our notion of sparse semi-algebraic set. In Section \ref{LCT}, we introduce a first constructive theorem that allows to efficiently compute the volume of specific sparse sets with the hierarchy. Section \ref{stokes} is dedicated to a method for accelerating the convergence of the sparse hierarchy, and in section \ref{secex} we discuss some numerical examples. Eventually, Section \ref{DCT} presents the general theorem for computing the volume of {a more general variety of} sparse sets using parallel computations, accompanied with some other examples.

\section{Preliminaries}
\label{cylinder}

\subsection{Notations and definitions}

Given a closed set $\mbK\subset\R^n$, we denote by
$\mathscr{C}(\mbK)$ the space of continuous functions on $\mbK$,
$\mathscr{M}(\mbK)$ the space of finite signed Borel measures on $\mbK$, with $\mathscr{C}_+(\mbK)$ and
$\mathscr{M}_+(\mbK)$ their respective cones of positive elements.

The Lebesgue measure on $\R^n$ is $\lambda^n(d\mbx):=d x_1 \otimes d x_2 \otimes \cdots \otimes dx_n=d\mbx$, and its
restriction to a set $\mbK \subset \R^n$ is
$\lambda_\mbK:=\ind_\mbK\lambda^n$ , where $\ind_\mbK$ denotes the indicator function equal to $1$ in $\mbK$ and zero outside.
In this paper, we focus on computing the volume or Lebesgue measure of $\mbK$, that we denote by $\vol\:\mbK$ or $\lambda^n(\mbK)$ or $\lambda_\mbK(\R^n)$. 

Given a Euclidean space $\mbX$ and a subspace $\mbX_i \subset \mbX$, the orthogonal projection map from $\mbX$ to $\mbX_i$ is denoted by $\pi_{\mbX_i}$. {Let $\mbe_j$ denote the $j$-th vector of the canonical basis of $\R^n$ such that if $\mbx = (x_1,\dots,x_n)$ then $x_j = \mbx \cdot \mbe_j$, where the dot denotes the scalar product between vectors.} The $m$-dimensional subspace spanned by {vectors $\mbe_{i_1}, \ldots, \mbe_{i_m}$ is denoted $\langle x_{i_1}, \ldots, x_{i_m} \rangle$ or $\langle x_{i_j} \rangle_{1 \leq j \leq m}$}. Given a measure $\mu\in\mathscr{M}(\mbX)$, its marginal with respect to $\mbX_i$ is denoted by $\mu^{{\mbX}_i} \in \mathscr{M}(\mbX_i)$. It is equal to the image or push-forward measure of $\mu$ through the map $\pi_{\mbX_i}$.

Let $\R[\mbx]$ be the ring of polynomials in the variables $\mbx=(x_1,\ldots,x_n)$ and let
$\R[\mbx]_d$ be the vector space of polynomials of degree at most $d$, whose dimension is $s(d):={n+d\choose n}$.
For every $d\in\N$, let  $\N^n_d:=\{\alpha\in\N^n:\vert\alpha\vert \,(=\sum_i\alpha_i)\leq d\}$, 
and
let $\mbv_d(\mbx)=(\mbx^\alpha)_{\alpha\in\N^n_d}$ be the vector of monomials of the canonical basis 
$(\mbx^\alpha)_{\alpha\in\N^n_d}$ of $\R[\mbx]_{d}$. 
A polynomial $p\in\R[\mbx]_d$ is written as
\[\mbx\mapsto p(\mbx)\,=\,\sum_{\alpha\in\N^n}p_\alpha\,\mbx^\alpha\,=\,\mbp \cdot\mbv_d(\mbx)\]
for some vector of coefficients $\mathbf{p}=(p_\alpha)_{\alpha\in\N^n_d} \in\R^{s(d)}$.

We say that $\mu \in \mcM(\R^n)$ is a {representing measure} of the sequence $\mbm=(m_\alpha)_{\alpha\in\N^n} \subset \R$ whenever
\[
m_\alpha= \int \mbx^\alpha\,d\mu(\mbx)
\]
for all $\alpha\in\N^n$. Given a sequence $\mbm=(m_\alpha)_{\alpha\in\N^n}$, let $L_\mbm:\R[\mbx]\to\R$ be the linear functional
\[f\:\left(=\sum_\alpha f_\alpha\, \mbx^\alpha\right)\:\mapsto L_\mbm(f)\,:=\,\sum_\alpha f_\alpha\,m_\alpha.\]
Given a sequence $\mbm=(m_\alpha)_{\alpha\in\N^n}$, and a polynomial $g := \sum_\gamma g_\gamma \, \mbx^\gamma \in\R[\mbx]$, the {localizing} moment matrix of order $d$
associated with $\mbm$ and $g$ is the 
real symmetric matrix $\mbM_d(g\,\mbm)$ of size $s(d)$ with rows and columns indexed in $\N^n_d$ and with entries 
\begin{eqnarray*}
\mbM_d(g\,\mbm)(\alpha,\beta)&:=& L_\mbm(g(\mbx)\,\mbx^{\alpha+\beta})\\
&=&\sum_\gamma g_\gamma\,m_{\alpha+\beta+\gamma},\quad \alpha,\beta\in\N^n_d.
\end{eqnarray*}
When $g\equiv 1$, the localizing moment matrix $\mbM_d(\mbm)$ is called simply the moment matrix.

\subsection{The Moment-SOS hierarchy for volume computation}

Let $\mbB:=[0,1]^n \subset \mbX:=\R^n$ be the $n$-dimensional unit box, and let $\mbK\subset\mbB$ be a closed basic semialgebraic set defined by
\[
\mbK\,:=\,\{\mbx\in\mbX:\: g_i(\mbx)\,\geq\,0,\:i=1,\ldots,m\,\} = \{\mbx\in\mbX:\: \mbg(\mbx)\,\geq\,0\}
\]
where $\mbg=(g_i)_{i=1,\ldots,m}\in \R[\mbx]^m$ and the rightmost vector inequality is meant entrywise.
As in \cite{sirev} consider the infinite-dimensional linear program (LP) on measures
\begin{equation}
\label{general-lp}
\begin{array}{rl}
\displaystyle \max_{\mu,\:\hat{\mu}\in\mathscr{M}_+(\mbB)} & \displaystyle \int d\mu \\
\mathrm{s.t.} & \mu+\hat{\mu} = \lambda_\mbB \\
& \spt\:\mu \subset\mbK\\
& \spt\:\hat{\mu}\subset\mbB.
\end{array}
\end{equation}
Its value is equal to $\vol\:\mbK$ and the measures  $\mu^*=\lambda_{\mbK}$, $\hat{\mu}^* := \lambda_{\mbB\backslash\mbK}$ are the unique optimal solutions of \eqref{general-lp}. The dual of
\eqref{general-lp} is  the infinite-dimensional LP on continuous functions
\begin{equation}
\label{general-lp-dual}
\begin{array}{rl}
\displaystyle \inf_{v\in\mathscr{C}_+(\mbB)} & \displaystyle \int v\,d\lambda_\mbB \\
\mathrm{s.t.}  & v\geq \ind_{\mbK}.
\end{array}
\end{equation}
It turns out that there is no duality gap between \eqref{general-lp} and \eqref{general-lp-dual}, i.e., they both have the same optimal value. Notice that a minimizing sequence of \eqref{general-lp-dual}
approximates the indicator function $\ind_{\mbK}$ from above by polynomials of increasing degrees.

The LP \eqref{general-lp} is a particular and simple instance of the {Generalized Moment Problem} (GMP). As described in \cite{sirev} one may approximate its optimal value as closely as desired
by using the following  key result in \cite{lass-book1}. Given an infinite dimensional LP on measures, one can construct a hierarchy of finite dimensional semidefinite programs\footnote{A semidefinite program is a convex conic optimization problem that can be solved numerically efficiently, e.g. by using interior point methods.}
 (SDP) whose associated sequence of optimal values converges monotonically to the optimal value of the original infinite dimensional LP. The basic idea is to represent a measure with the sequence $\mbm$ of its moments, and to formulate finite dimensional SDPs on truncations of the sequence $\mbm$. When this strategy is applied to LP \eqref{general-lp},
 the step $d$ of the moment-SOS hierarchy consists of solving the SDP relaxation 
 \begin{equation}
 \label{sdp-primal}
 \begin{array}{rrl}
 \mbP_d: & \displaystyle 
 \max_{\mbm,\:\widehat{\mbm}  \in \R^{s(d)}} & m_0 \\
 & \mathrm{s.t.} & m_\alpha+\widehat{m}_\alpha= \int_{\mbB} \mbx^\alpha \: d\mbx,\quad \alpha\in\N^n_{2d}\\
 && \mbM_d(\mbm) \succeq 0,\:\mbM_d(\widehat{\mbm})\,\succeq0\\
&& \mbM_{d-d_i}(g_i\,\mbm)\,\succeq0,\quad i=1,\ldots,m
\end{array}
\end{equation}
where $\mbm=(m_\alpha)_{\alpha\in\N^n_{2d}}$,
$\widehat{\mbm}=(\widehat{m}_\alpha)_{\alpha\in\N^n_{2d}}$, and $d_i=\lceil ({\rm deg}\,g_i)/2\rceil$,
$i=1,\ldots,m$.

The sequence of SDP problems  $(\mbP_d)_{d \in \N}$ indexed by the relaxation order $d$ is a hierarchy in the sense that its sequence of values converges monotonically from above to $\vol\:\mbK$ when $d$ increases. Each SDP relaxation $\mbP_d$ has a dual formulated in terms of sums of squares (SOS) of polynomials, which leads to a dual SOS hierarchy, whence the name \emph{moment-SOS hierarchy}. 
The basic moment-SOS hierarchy can be modeled using the GloptiPoly Matlab toolbox \cite{glopti} and solved using any SDP solver, e.g. SeDuMi or Mosek. For more details on the moment-SOS hierarchy and some of its applications, the interested reader is referred to \cite{lass-book1}.

\subsection{The {correlative} sparsity pattern and its graph representation}

\label{prelimpattern}

This work heavily relies on a specific notion of sparsity defined as follows.

\begin{defn}
A scalar polynomial $p$ is said to be \textit{sparse} when its vector of coefficients $\mbp$ is sparse. In other words, $p$ is a linear combination of a small number of monomials.
\end{defn}

\begin{defn}
A family of polynomial vectors $(\mbg_1,\dots,\mbg_m)$ is said to be \textit{correlatively sparse} whenever its correlative sparsity pattern matrix $\mbR := (R_{ij})_{1 \leq i,j \leq n}$, defined by
$$ R_{ij} :=  \delta_{ij} + \sum\limits_{k=1}^m \left\Vert \frac{\partial}{\partial x_i} \mbg_k \right\Vert\left\Vert \frac{\partial}{\partial x_j} \mbg_k \right\Vert$$
(where $\delta_{ij}=1$ if $i=j$ and $0$ otherwise, and $\|\cdot\|$ is any norm on polynomial vectors), is sparse. In other words, for many pairs of indices $i\neq j$, the variables $x_i$ and $x_j$ do not appear simultaneously in any element of $\{\mbg_1,\dots,\mbg_m\}$.
\end{defn}

\begin{defn}
The \textit{correlation graph} $G = (V,E)$ of $(\mbg_1,\dots,\mbg_m)$ is defined by vertices $V = \{1,\dots,n\}$ and edges $E = \left\{(i,j) \in \{1,\dots,n\}^2 : i\neq j \ \& \ R_{ij} \neq 0 \right\}$.
The \textit{correlative sparsity} $CS$ of $(\mbg_1,\dots,\mbg_m)$ is the treewidth of its correlation graph. 
\end{defn}

This paper proposes a method to reduce the size of the volume computation SDP to $CS + d + 1 \choose d$ instead of $n + d \choose d$, under appropriate assumptions.

\begin{defn}
The \textit{support} of $\mbg_i$ is the set $S(\mbg_i) := \left\{j \in \{1,\dots,n\} : \frac{\partial}{\partial x_j} \mbg_i \neq 0 \right\}$. 
The \textit{support subspace} of $\mbg_i$ is the set $\mbX_i := \langle x_j \rangle_{j \in S(\mbg_i)}$, whose dimension is smaller than $n$. Since by definition $\mbg_i = \mbg_i \circ \pi_{\mbX_i}$, we use both notations with the same meaning. Then $\mbX := \sum_{i=1}^m \mbX_i$ is called the \textit{coordinate subspace decomposition} associated to $(\mbg_1,\dots,\mbg_m)$. 
\end{defn}

Without loss of generality, we can suppose that $\mbX = \R^n$ (otherwise, there would be variables that appear in none of the $\mbg_i$s).
\color{black}

\begin{defn}
A {\em sparse basic semi-algebraic set} has a description
\[
\mbK := \{ \mbx \in \mbX \; : \; \mbg_i(\pi_{\mbX_i}(\mbx)) \geq 0, \: i=1,\ldots,m\}
\]
where $(\mbg_i)_{1 \leq i \leq m}$ is a { correlatively sparse family of polynomial vectors} (inequalities are meant entrywise) and { $\mbX = \sum_{i=1}^m \mbX_i$ is the coordinate subspace decomposition associated to $(\mbg_i)_{1 \leq i \leq m}$ (and, by extension, to $\mbK$). A \textit{sparse semi-algebraic set} is a finite union of sparse basic semi-algebraic sets that share the same coordinate subspace decomposition.}
\end{defn}

A simple example of a sparse basic semi-algebraic set is
\begin{equation}\label{sparsex1}
\mbK := \{\mbx = (x_1,x_2,x_3,x_4) \in \R^4 \; : \; \mbg_1(x_1,x_2) \geq 0,\: \mbg_2(x_2,x_3) \geq 0, \: \mbg_3(x_3,x_4) \geq 0 \}
\end{equation}
for $\mbX=\R^4$, $\mbX_1= \langle x_1,x_2 \rangle$, $\mbX_2 = \langle x_2,x_3 \rangle$, $\mbX_3 = \langle x_3,x_4 \rangle$ and the projection maps are
$\pi_{\mbX_1}(\mbx)=(x_1,x_2)$, $\pi_{\mbX_2}(\mbx)=(x_2,x_3)$, $\pi_{\mbX_3}(\mbx)=(x_3,x_4)$.

Our methodology is based on the classical theory of clique trees. Up to a chordal extension (which is equivalent to slightly weakening the {correlative} sparsity pattern), we suppose that the {correlation} graph is \emph{chordal} (i.e. every cycle of length greater than 3 has a chord, i.e. an edge linking two nonconsecutive vertices). Then we construct the {maximal} \emph{cliques} of the graph (a clique $C$ is a subset of vertices such that every vertex of $C$ is connected to all the other vertices of $C$, { a clique is maximal when its cardinal is maximal}). {Most of the time, up to concatenation of some of the $\mbg_i$ the maximal cliques of a chordal correlation graph are exactly the supports of the $\mbg_i$: $C_i = S(\mbg_i)$, so in the following we will consider only such case\footnote{The only exception would be cliques forming a triangle and is tackled in detail in section \ref{secdip}.}}. Figure \ref{fig:sparsex1} illustrates this constuction for the sparse set (\ref{sparsex1}), the vertices are denoted by $x_i$ and our {maximal} cliques are denoted by $C_j$.

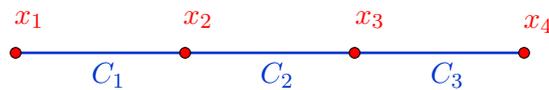
\begin{figure}[!h]
\begin{center}
\begin{tikzpicture}[line cap=round,line join=round,>=triangle 45,x=1.0cm,y=1.0cm]
\clip(-2.5,-0.7) rectangle (5.,0.7);
\draw [line width=1.pt,color=qqttcc] (-2.230336297512104,0.)-- (0.,0.);
\draw [line width=1.pt,color=qqttcc] (0.,0.)-- (2.230336297512104,0.);
\draw [line width=1.pt,color=qqttcc] (2.230336297512104,0.)-- (4.460672595024208,0.);
\draw [fill=ffqqqq] (0.,0.) circle (2.0pt);
\draw[color=ffqqqq] (0.17,0.44) node {$x_2$};
\draw [fill=ffqqqq] (-2.230336297512104,0.) circle (2.0pt);
\draw[color=ffqqqq] (-2.05,0.44) node {$x_1$};
\draw [fill=ffqqqq] (2.230336297512104,0.) circle (2.0pt);
\draw[color=ffqqqq] (2.43,0.44) node {$x_3$};
\draw[color=qqttcc] (-1.01,-0.3) node {$C_1$};
\draw [fill=ffqqqq] (4.460672595024208,0.) circle (2.0pt);
\draw[color=ffqqqq] (4.65,0.38) node {$x_4$};
\draw[color=qqttcc] (1.21,-0.3) node {$C_2$};
\draw[color=qqttcc] (3.45,-0.3) node {$C_3$};
\end{tikzpicture}
\label{fig:4dlin}
\end{center}
\caption{Graph associated to the sparse set (\ref{sparsex1}).}
\label{fig:sparsex1}
\end{figure}

Then, we construct a \emph{clique tree} which is instrumental to the computer implementation. It is proved in \cite{graphs} that if the graph is chordal, then its {maximal} cliques can be organized within a tree satisfying the clique intersection property: for two {maximal} cliques $C$ and $C'$ the intersection $C\cap C'$ is contained in every {maximal} clique on the path from $C$ to $C'$.
Figure \ref{fig:4dcl} represents the clique tree associated to the sparse set (\ref{sparsex1}).

\begin{figure}[!h]
\begin{center}
\begin{tikzpicture}[line cap=round,line join=round,>=triangle 45,x=1.0cm,y=1.0cm]
\clip(-1.5,-0.7) rectangle (3.7,1.);
\draw [line width=1.pt] (-1.115168148756052,0.)-- (1.115168148756052,0.);
\draw [line width=1.pt] (1.115168148756052,0.)-- (3.3455044462681562,0.);
\draw [fill=qqttcc] (-1.115168148756052,0.) circle (2.0pt);
\draw[color=qqttcc] (-1.09,0.58) node {$C_1$};
\draw [fill=qqttcc] (1.115168148756052,0.) circle (2.0pt);
\draw[color=qqttcc] (1.13,0.62) node {$C_2$};
\draw [fill=qqttcc] (3.3455044462681562,0.) circle (2.0pt);
\draw[color=qqttcc] (3.27,0.6) node {$C_3$};
\draw[color=fleche,->] (3.3455044462681562,-0.2) to [bend left = 45] (1.115168148756052,-0.2);
\draw[color=fleche,->] (1.115168148756052,-0.2) to [bend left = 45] (-1.115168148756052,-0.2);
\end{tikzpicture}
\caption{Linear clique tree associated to the sparse set (\ref{sparsex1}).}
\label{fig:4dcl}
\end{center}
\end{figure}
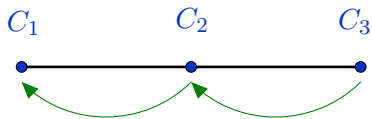

For a slightly more complicated illustration, consider the sparse set
\begin{equation}\label{sparsex2}
\mbK := \{\mbx \in \R^6 \; : \; \mbg_1(x_1,x_2) \geq 0, \: \mbg_2(x_2,x_3,x_4) \geq 0, \: \mbg_3(x_3,x_5) \geq 0, \: \mbg_4(x_4,x_6) \geq 0\}
\end{equation}
whose {correlation} graph is represented on Figure \ref{fig:sparsex2} and whose clique tree is represented on Figure \ref{fig:6dcl}.
The clique tree of Figure \ref{fig:4dcl} is called \emph{linear} because 
all {maximal} cliques form a single chain (i.e. they are in a sequence)  with no branching. In contrast, the clique tree of Figure \ref{fig:6dcl} is called \emph{branched}.

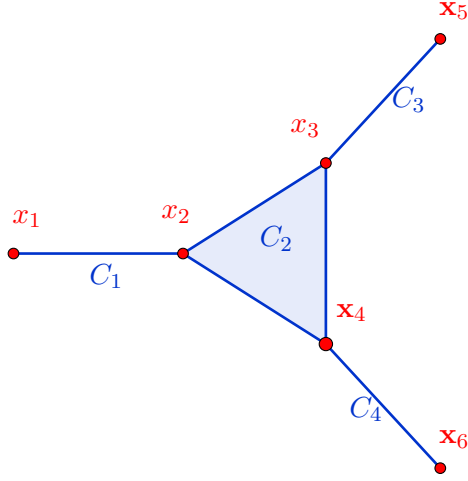
\begin{figure}[!h]
\begin{center}
\begin{tikzpicture}[line cap=round,line join=round,>=triangle 45,x=1.0cm,y=1.0cm]
\clip(-2.5,-3.) rectangle (4.,3.5);
\fill[line width=1.pt,color=qqttcc,fill=qqttcc,fill opacity=0.10000000149011612] (0.,0.) -- (1.88,1.2) -- (1.88,-1.2) -- cycle;
\draw [line width=1.pt,color=qqttcc] (0.,0.)-- (1.88,1.2);
\draw [line width=1.pt,color=qqttcc] (1.88,1.2)-- (1.88,-1.2);
\draw [line width=1.pt,color=qqttcc] (1.88,-1.2)-- (0.,0.);
\draw [line width=1.pt,color=qqttcc] (1.88,1.2)-- (3.3853576993004233,2.845690796339621);
\draw [line width=1.pt,color=qqttcc] (1.88,-1.2)-- (3.3853576993004233,-2.845690796339621);
\draw [line width=1.pt,color=qqttcc] (-2.230336297512104,0.)-- (0.,0.);
\draw [fill=ffqqqq] (0.,0.) circle (2.0pt);
\draw[color=ffqqqq] (-0.09,0.52) node {$x_2$};
\draw [fill=ffqqqq] (1.88,1.2) circle (2.0pt);
\draw[color=ffqqqq] (1.61,1.66) node {$x_3$};
\draw [fill=ffqqqq] (1.88,-1.2) circle (2.5pt);
\draw[color=ffqqqq] (2.22,-0.78) node {$\mbx_4$};
\draw[color=qqttcc] (1.23,0.2) node {$C_2$};
\draw [fill=ffqqqq] (3.3853576993004233,2.845690796339621) circle (2.0pt);
\draw[color=ffqqqq] (3.57,3.22) node {$\mbx_5$};
\draw [fill=ffqqqq] (3.3853576993004233,-2.845690796339621) circle (2.0pt);
\draw[color=ffqqqq] (3.57,-2.46) node {$\mbx_6$};
\draw[color=qqttcc] (2.97,2.04) node {$C_3$};
\draw[color=qqttcc] (2.41,-2.06) node {$C_4$};
\draw [fill=ffqqqq] (-2.230336297512104,0.) circle (2.0pt);
\draw[color=ffqqqq] (-2.05,0.46) node {$x_1$};
\draw[color=qqttcc] (-1.01,-0.3) node {$C_1$};
\end{tikzpicture}
\end{center}
\caption{Graph associated to the sparse set (\ref{sparsex2}).}
\label{fig:sparsex2}
\end{figure}

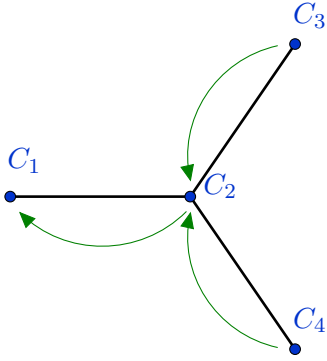
\begin{figure}[!h]
\begin{center}
\begin{tikzpicture}[line cap=round,line join=round,>=triangle 45,x=1.0cm,y=1.0cm]
\clip(-1.5,-3) rectangle (3.5,2.7);
\draw [line width=1.pt] (-1.115168148756052,0.)-- (1.2533333333333332,0.);
\draw [line width=1.pt] (1.2533333333333332,0.)-- (2.632678849650212,2.0228453981698107);
\draw [line width=1.pt] (1.2533333333333332,0.)-- (2.632678849650212,-2.0228453981698107);
\draw [fill=qqttcc] (-1.115168148756052,0.) circle (2.0pt);
\draw[color=qqttcc] (-0.93,0.48) node {$C_1$};
\draw [fill=qqttcc] (2.632678849650212,2.0228453981698107) circle (2.0pt);
\draw[color=qqttcc] (2.83,2.4) node {$C_3$};
\draw [fill=qqttcc] (2.632678849650212,-2.0228453981698107) circle (2.0pt);
\draw[color=qqttcc] (2.83,-1.64) node {$C_4$};
\draw [fill=qqttcc] (1.2533333333333332,0.) circle (2.0pt);
\draw[color=qqttcc] (1.65,0.14) node {$C_2$};
\draw[color=fleche,->] (2.4,2.0) to [bend right = 45] (1.26,0.2);
\draw[color=fleche,->] (2.4,-2.0) to [bend left = 45] (1.26,-0.2);
\draw[color=fleche,->] (1.2,-0.2) to [bend left = 45] (-1,-0.2);
\end{tikzpicture}
\end{center}
\caption{Branched clique tree associated to the sparse set (\ref{sparsex2}).}
\label{fig:6dcl}
\end{figure}

Our method consists of conveniently rooting the clique tree and splitting the volume computation problem into lower-dimensional subproblems that are in correspondence with the {maximal} cliques of the graph. The subproblem associated  with a {maximal} clique $C$ takes as only input the solutions of the subproblems associated with the children of $C$ in the clique tree. This way, one can compute in parallel the solutions of all the subproblems of a given generation, and then use their results to solve the subproblems of the parent generation. This is the meaning of the arrows in Figures \ref{fig:4dcl} and \ref{fig:6dcl}. The volume of $\mbK$ is the optimal value of the (last) sub-problem associated with the root $C_1$ of the tree.

\section{Linear sparse volume computation}

In this section we describe the method in the prototype case of linear clique trees. The more general case of branched clique trees is treated later on.

\label{LCT}

\subsection{An illustrative example: the bicylinder}\label{bicylinder}

\begin{figure}[!h]
\begin{center}
\includegraphics[scale=0.4]{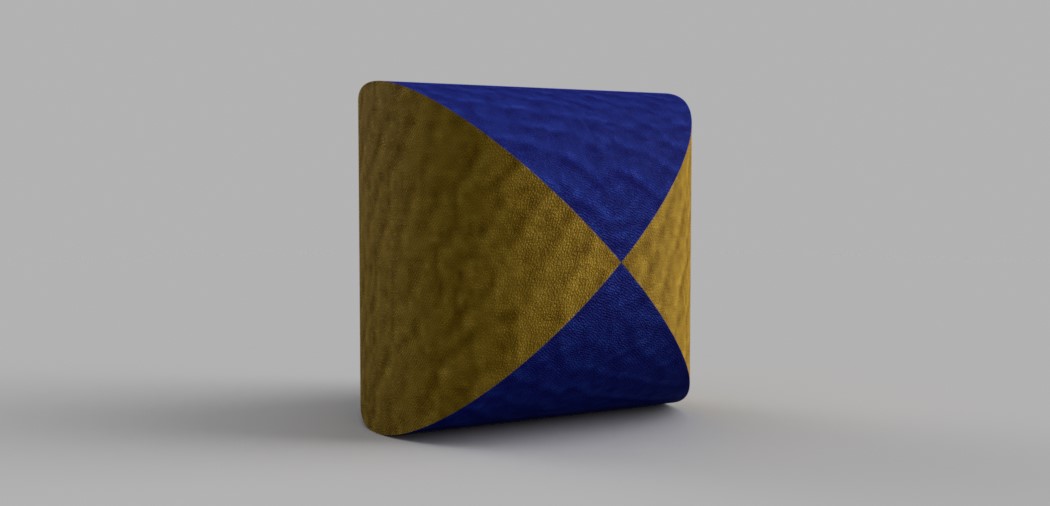}
\caption{A representation of the bicylinder produced by the AutoDesk Fusion 360 software.\label{fig:bicylinder}}
\end{center}
\end{figure}

Before describing the methodology in the general case, we briefly explain the general underlying idea on a simple illustrative example.
Consider the sparse semi-algebraic set
\begin{equation}
\label{setK-sparse}
\mbK := \left\{\mbx \in \R^3 \;  : \; \mbg_1(x_1,x_2):=1-x^2_1-x^2_2 \geq 0, \:  \mbg_2(x_2,x_3):=1-x^2_2-x^2_3 \geq 0\right\}
\end{equation}
modelling the intersection of two truncated cylinders $\mbK_1 := \{\mbx \in \R^3 \;  : \; x_1^2 + x_2^2 \leq 1\}$ and $\mbK_2 := \{\mbx \in \R^3 \;  : \: x_2^2 + x_3^2 \leq 1\}$, see Figure \ref{bicylinder}. The subspaces are $\mbX_1 = \langle x_1,x_2 \rangle$ and $\mbX_2 = \langle x_2,x_3 \rangle$ and the projection maps are $\pi_{\mbX_1}(\mbx)=(x_1,x_2)$ and $\pi_{\mbX_2}(\mbx)=(x_2,x_3)$. Let $\mbU_i := \pi_{\mbX_i}(\mbK_i)$ for $i = 1,2$.

Following \cite{sirev}, computing $\vol\:\mbK$ is equivalent 
to solving the infinite-dimensional LP \eqref{general-lp}.
Next observe that in the description (\ref{setK-sparse}) of $\mbK$ there is no direct interaction between
 variables $x_1$ and $x_3$, but this is neither exploited
 in the LP formulation (\ref{general-lp}) nor in the 
 SDP relaxations \eqref{sdp-primal} to solve (\ref{general-lp}).
 To exploit this {correlative} sparsity pattern we propose the following alternative formulation
\begin{eqnarray}
\vol\:\mbK &=& \max\limits_{\substack{\mu_i \in \mcM_+(\mbX_i)\\ i=1,2}} \int_{\R^2} d\mu_1 \label{sparpb} \\
&\text{s.t.}& \mu_2 \leq \lambda \otimes \lambda \nonumber \\
&& \mu_1 \leq \lambda \otimes \mu^{\langle x_2\rangle}_2 \nonumber \\
&& \spt\:\mu_1 \subset \mbU_1,\:\spt\:\mu_2 \subset \mbU_2 \nonumber
\end{eqnarray}
where $\mu_2^{\langle x_2\rangle}$ denotes the marginal of $\mu_2$ in the variable $x_2$.

In the sparse case, the basic idea behind our reformulation of the volume problem is as follows. We are interested in $\vol\:\mbK$. 
However, as the marginal of a measure has the same mass as the measure itself, instead of looking for the full measure $\mu$ in problem \eqref{general-lp}, we only look for its marginal on $\mbX_1$.

This marginal $\mu^{\mbX_1}$ is modeled by $\mu_1$ in \eqref{sparpb}.
In order to compute it, we need some additional information on $\mu$ captured by the measure $\mu_2$ in \eqref{sparpb}.  The unique optimal solution $\mu$ of \eqref{general-lp} is
\begin{eqnarray}
d\mu(\mbx) &=& d\lambda_\mbK(\mbx) \nonumber \\
&=& \ind_{\mbU_1}(x_1,x_2) \; \ind_{\mbU_2}(x_2,x_3) \; d\mbx \nonumber
\end{eqnarray}
and therefore its marginal $\mu_1:=\mu^{\mbX_1}$ on $(x_1,x_2)$ is
\begin{eqnarray}
d\mu_1(x_1,x_2) &=& \int_0^1 d\mu(x_1,x_2,x_3) \nonumber \\
&=& \ind_{\mbU_1}(x_1,x_2) \; dx_1 \underbrace{\left( \int_0^1 \ind_{\mbU_2}(x_2,x_3) \; dx_3 \right) dx_2}_{d\mu^{\langle x_2\rangle}_2(x_2)} \label{mu}
\end{eqnarray}
where
\begin{eqnarray}
d\mu_2(x_2,x_3) &=& d\lambda_{\mbU_2}(x_2,x_3). \label{nu}
\end{eqnarray}

What is the gain in solving (\ref{sparpb}) when compared to solving (\ref{general-lp}) ?
Observe that in (\ref{sparpb}) we have two unknown measures $\mu_1$ and $\mu_2$
on $\R^2$, instead of a single measure $\mu$ on
$\R^3$ in (\ref{general-lp}). In the resulting SDP relaxations
associated with (\ref{sparpb}) this translates into SDP constraints
of potentially much smaller size. For instance, and to fix ideas, for the same relaxation degree $d$:
\begin{itemize}
\item The SDP relaxation $\mbP_d$ associated with (\ref{general-lp}) contains a moment matrix 
(associated with $\mu$ in (\ref{general-lp})) of size ${3 +d\choose d}$;
\item
The SDP relaxation $\mbP_d$ associated with (\ref{sparpb}) contains two
moment matrices, one associated with $\mu_1$
of size ${2+d\choose d}$, and
one associated with $\mu_2$
of size ${2+d\choose d}$, where $\mu_1$ and $\mu_2$ are as in (\ref{sparpb}).
\end{itemize}
As the size of those matrices is the crucial parameter for all SDP solvers,
one can immediately appreciate the computational gain that can be expected from the
formulation (\ref{sparpb})  versus the formulation (\ref{general-lp}) when the dimension is high or the relaxation order increases. 
Next it is not difficult to extrapolate that the gain can be even more impressive
in the case where the {correlative} sparsity pattern is of the form
\begin{equation}
\label{def-K-multiple}
\mbK=\{(\mbx_0,\ldots\mbx_m)\in \mbX : \:g_i(\mbx_{i-1},\mbx_i)\geq0,\quad i=1,\ldots,m\,\},
\end{equation}
with $\mbx_i\in\R^{n_i}$ and $n_i \ll n$ for $i=0,\ldots,m$. In fact,
it is straightforward to define examples of sets $\mbK$ of the form (\ref{def-K-multiple}) where 
the first SDP relaxation associated with the original dense LP formulation (\ref{general-lp}) cannot be even implemented
on state-of-the-art computers, whereas the SDP relaxations associated with a generalization of the sparse LP formulation (\ref{sparpb}) can be easily implemented, at least for reasonable values of $d$.

\subsection{Linear computation theorem}

Let
\[
\mbK_i := \{ \mbx \in \mbX \; : \; \mbg_i(\pi_{\mbX_i}(\mbx)) \geq 0 \}
\]
with $\mbg_i \in \R[\mbx_i]^{p_i}$, so that our sparse semi-algebraic set can be written
\[
\mbK = \bigcap\limits_{i=1}^m\mbK_i.
\]
Moreover, let
\[
\mbU_i := \{\mbx_i \in \mbX_i \; : \; \mbg_i(\mbx_i) \geq 0\} = \pi_{\mbX_i}(\mbK_i)
\]
and let
\[
\mbY_i := \mbX_i \cap \mbX_{i+1}^\perp {= \langle x_j \rangle_{j \in C_i\cap C_{i+1}^c}}
\] 
be a subspace of dimension $n_i { := |C_i \cap C_{i+1}^c| }$ for $i=1,\ldots,m-1$ with $\mbY_m = \mbX_m$. The superscript $\perp$ denotes the orthogonal complement.

\begin{asm}
\label{ass-1}
For all $i \in \{2,\ldots,m\}$ it holds
$\mbX_i \cap \sum\limits_{j=1}^{i-1}\mbX_j \neq \{0\}$.
\end{asm}
\begin{asm}
\label{ass-1b}
For all $i \in \{2,\ldots,m\}$ it holds
$\mbX_i \cap \sum\limits_{j=1}^{i-1}\mbX_j \subset \mbX_{i-1}$.
\end{asm}
If Assumption \ref{ass-1} is violated then $\mbK$ can be decomposed as a Cartesian product, and one should just apply our methodology to each one of its factors. Assumption \ref{ass-1b} ensures that the associated clique tree is linear.

\begin{thm} \label{linco}
If Assumptions \ref{ass-1} and \ref{ass-1b} hold, then $\vol\:\mbK$ is the value of the LP problem
\begin{eqnarray}
&\max\limits_{\substack{\mu_i \in \mcM_+(\mbX_i) \\ i=1,\ldots,m}} & \displaystyle\int d\mu_1 \label{lincomp} \\
&\text{s.t.} & \mu_i \leq \mu_{i+1}^{\mbX_i \cap \mbX_{i+1}} \otimes \lambda^{n_i} \quad i=1,\ldots,m-1 \label{lindom} \\
&& \mu_{m} \leq \lambda^{n_m}\\
&& \spt\:\mu_{i} \subset \mbU_i \quad\quad\quad\quad\;\;\, i=1,\ldots,m. \label{linspt}
\end{eqnarray}
\end{thm}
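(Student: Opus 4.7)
The plan is to prove both inequalities: (i) exhibit a feasible tuple $(\mu_i^*)$ with $\int d\mu_1^* = \vol\:\mbK$, and (ii) show by telescoping that any feasible tuple satisfies $\int d\mu_1 \leq \vol\:\mbK$. The crucial preliminary is a structural observation: by induction on $j-i$, Assumption~\ref{ass-1b} gives $\mbX_i \cap \mbX_j \subset \mbX_{i+1}$ for all $j > i$. Since each $\mbX_i$ is a coordinate subspace, the coordinate sets underlying $\mbY_1,\ldots,\mbY_m$ are therefore pairwise disjoint, and a dimension count via inclusion--exclusion yields $\sum_i n_i = n$. Hence $\R^n = \mbY_1 \oplus \cdots \oplus \mbY_m$ as an orthogonal direct sum with $\lambda^n = \lambda^{n_1} \otimes \cdots \otimes \lambda^{n_m}$. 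Writing $\mbx = (y_1,\ldots,y_m)$ accordingly, $\pi_{\mbX_i}(\mbx)$ decomposes as $(y_i, \pi_{\mbX_i\cap\mbX_{i+1}}(\mbx))$, where the second component depends only on $(y_{i+1},\ldots,y_m)$.

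For (i), I would define $\mu_m^* := \lambda_{\mbU_m}$ on $\mbX_m = \mbY_m$ and recursively for $i = m-1,\ldots,1$,
\[
d\mu_i^*(z,y) := \ind_{\mbU_i}(z,y)\, d\mu_{i+1}^{*\,\mbX_i\cap\mbX_{i+1}}(z)\, dy \quad \text{on } \mbX_i = (\mbX_i\cap\mbX_{i+1})\oplus\mbY_i.
\]
Constraints \eqref{lindom}--\eqref{linspt} are immediate, since the Radon--Nikodym density of $\mu_i^*$ with respect to $\mu_{i+1}^{*\,\mbX_i\cap\mbX_{i+1}} \otimes \lambda^{n_i}$ is exactly $\ind_{\mbU_i} \leq 1$, and is supported in $\mbU_i$. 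Then $\int d\mu_1^*$ is computed by iteratively replacing each marginal $\mu_{i+1}^{*\,\mbX_i\cap\mbX_{i+1}}$ by the full parent measure $\mu_{i+1}^*$ via the pushforward identity, and applying Fubini across the direct-sum decomposition. This yields
\[
\int d\mu_1^* = \int_{\R^n} \prod_{i=1}^m \ind_{\mbU_i}(\pi_{\mbX_i}(\mbx))\, d\mbx = \int \ind_\mbK\, d\lambda^n = \vol\:\mbK,
\]
since $\mbK = \bigcap_i \pi_{\mbX_i}^{-1}(\mbU_i)$.

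For (ii), let $(\mu_i)$ be any feasible tuple. Using $\spt\mu_1 \subset \mbU_1$ and \eqref{lindom} for $i=1$,
\[
\int d\mu_1 = \int \ind_{\mbU_1}\, d\mu_1 \leq \int \ind_{\mbU_1}\, d(\mu_2^{\mbX_1\cap\mbX_2} \otimes \lambda^{n_1}) = \int_{\mbX_2} \Phi_1(\pi_{\mbX_1\cap\mbX_2}(\mbx_2))\, d\mu_2(\mbx_2),
\]
where $\Phi_1(z) := \int_{\mbY_1} \ind_{\mbU_1}(z,y)\, dy$ after Fubini. Iterating the same step using $\spt\mu_{i+1} \subset \mbU_{i+1}$ and \eqref{lindom} at order $i+1$, and using $\mu_m \leq \lambda^{n_m}$ at the last step, one arrives after $m$ iterations at the same quantity $\int_{\R^n} \prod_i \ind_{\mbU_i}(\pi_{\mbX_i}(\mbx))\, d\mbx = \vol\:\mbK$.

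The main obstacle is the bookkeeping in the telescoping: at each step one must verify that $\pi_{\mbX_i\cap\mbX_{i+1}}$ factors correctly through $\mbX_{i+1} = (\mbX_{i+1}\cap\mbX_{i+2}) \oplus \mbY_{i+1}$, so that the accumulated integrand is a genuine function of $\mbx_{i+1}$ to which Fubini applies. The structural lemma provides exactly this consistency by placing all overlap coordinates in $\mbY_{i+1}\oplus\cdots\oplus\mbY_m$; this is the content of Assumption~\ref{ass-1b} and the reason why linearity of the clique tree is essential. Assumption~\ref{ass-1} plays only the minor role of excluding trivial Cartesian decompositions.
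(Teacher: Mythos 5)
Your proof is correct and takes essentially the same route as the paper: your recursively defined $\mu_i^\ast$ unroll exactly to the paper's closed-form optimal measures $d\mu_i(\mbx_i)=\ind_{\mbU_i}(\mbx_i)\bigl(\int_{\mbZ_i}\prod_{j>i}\ind_{\mbU_j}\circ\pi_{\mbX_j}(\mbx_i+\mbz_i)\,d\mbz_i\bigr)d\mbx_i$, and your structural lemma ($\mbX_i\cap\mbX_j\subset\mbX_{i+1}$ for $j>i$, hence $\R^n=\mbY_1\oplus\cdots\oplus\mbY_m$) is what the paper uses implicitly through the identities $\sum_{j\geq i}\mbX_j=\mbX_i\oplus\mbZ_i$ and $(\mbX_i^\perp\cap\mbX_{i+1})\oplus\mbZ_{i+1}=\mbZ_i$. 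The only difference is presentational: for the upper bound the paper asserts $\tilde\mu_i\leq\mu_i$ for any feasible tuple (an implicit backward induction via monotonicity of marginals and restriction to $\mbU_i$), whereas you make the same induction explicit by telescoping the constraints into a nested integral, which is a legitimate and arguably more self-contained rendering of the same estimate.
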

\begin{proof} 
Let us first prove that the value of the LP is larger than $\vol\:\mbK$. For $i=1,\ldots,m$, let \mbox{$\mbZ_i := \mbX_i^\perp \cap \sum\limits_{j=i+1}^m \mbX_j$} so that $\sum\limits_{j=i}^m \mbX_j = \mbX_i \oplus \mbZ_i$. For $\mbx_i  \in \mbX_i$ define
\begin{equation} \label{eqproof}
d\mu_i(\mbx_i) := \ind_{\mbU_i}(\mbx_i) \left(\int_{\mbZ_i} \prod\limits_{j = i+1}^m \ind_{\mbU_j}\circ\pi_{\mbX_j}(\mbx_i + \mbz_i) \; d \mbz_i \right) d \mbx_i.
\end{equation}
By construction $\mu_i \in \mcM_+(\mbX_i)$ and constraints \eqref{linspt} are enforced. In addition, one can check that, if $\mbx_{i,i+1} \in \mbX_i \cap \mbX_{i+1}$, then
\begin{eqnarray*}
d\mu_{i+1}^{\mbX_i \cap \mbX_{i+1}}(\mbx_{i,i+1}) &\stackrel{\text{def}}{=}& \int_{{\mby_{i,i+1}} \in \mbX_i^\perp \cap \mbX_{i+1}} d\mu_{i+1}(\mbx_{i,i+1} + {\mby_{i,i+1}}) \\
&\stackrel{\eqref{eqproof}}{=}& \left( \int_{\mbX_i^\perp \cap \mbX_{i+1}} \ind_{\mbU_{i+1}}(\mbx_{i,i+1}+{\mby_{i,i+1}}) \right. \\
&& \left. \quad \left( \int_{\mbZ_{i+1}} \prod\limits_{j=i+2}^m \ind_{\mbU_j}\circ\pi_{\mbX_j}(\mbx_{i,i+1}+{\mby_{i,i+1}}+\mbz_{i+1}) \; d\mbz_{i+1}\right) d{\mby_{i,i+1}} \right) d\mbx_{i,i+1} \\
&=& \left(\int_{\mbZ_i} \prod\limits_{j = i+1}^m \ind_{\mbU_j}\circ\pi_{\mbX_j}(\mbx_{i,i+1} + \mbz_i) \; d \mbz_i \right) d \mbx_{i,i+1}
\end{eqnarray*}
since $(\mbX_i^\perp \cap \mbX_{i+1}) \oplus \mbZ_{i+1} = \mbZ_i$.

Thus, constraints \eqref{lindom} are satisfied. Moreover, they are saturated on $\mbU_i$. Eventually, one has
$$ \mbX = \mbX_1 \oplus \mbZ_1 $$
and thus
\begin{eqnarray*} \int_{\mbX_1} d\mu_1 &=& \int_{\mbX_1} \ind_{\mbU_1}(\mbx_1) \left(\int_{\mbZ_1} \prod\limits_{j = 2}^m \ind_{\mbU_j}\circ\pi_{\mbX_j}(\mbx_1 + \mbz_1) \; d \mbz_1 \right) d \mbx_1 \\
&=& \int_\mbX \left( \prod\limits_{j=1}^m \ind_{\mbU_j}\circ\pi_{\mbX_j}(\mbx) \right) d \mbx \\
&=& \int_\mbX \ind_\mbK(\mbx) \; d \mbx \\
&=& \vol\:\mbK,
\end{eqnarray*}
that is, we have just proved that the value of the LP is larger than or equal to $\vol\:\mbK$.

To prove the converse inequality, observe that our previous choice $\mu_1,\ldots,\mu_m$ saturates the constraints \eqref{lindom} while enforcing the constraints \eqref{linspt}. Any other feasible solution $\tilde{\mu}_1,\ldots,\tilde{\mu}_m$ directly satisfies the inequality $\tilde{\mu}_i \leq \mu_i$.
In particular, $\tilde{\mu}_1 \leq \mu_1$ and thus
\[
\int  d\tilde{\mu}_1(\mbx_1) \leq \vol\:\mbK.
\]
\end{proof}

\begin{rqe} \label{dual}
The dual of the LP problem of Theorem \ref{linco} is the LP problem
\begin{equation} \label{dualeq}
\begin{array}{rll}
\inf\limits_{\substack{v_i \in \mcC_+(\mbX_i) \\ i=1,\ldots,m}} & \displaystyle \int_{\mbX_m} v_m(\mbx_m)\; d\mbx_m \\
\text{s.t.}& v_1(\mbx_1) \geq 1 & \forall\:\mbx_1 \in \mbU_1\\
& v_{i+1}(\mbx_{i+1}) \geq \displaystyle \int_{\mbY_i} v_{i}(\mby_i,\pi_{\mbX_i}(\mbx_{i+1})) \; d \mby_i & \forall \: \mbx_{i+1} \in \mbU_{i+1},\: i=1,\ldots,m-1.\\
\end{array}
\end{equation}
According to \cite{lass-book1}, there is no duality gap, i.e. the value of the dual is $\vol\:\mbK$. For example, in the case of the bicylinder treated in Section \ref{bicylinder}, the dual 
reads:
$$\begin{array}{rll}
\inf\limits_{\substack{v_1,\:v_2 \in \mcC_+([0,1]^2)}} & \displaystyle \int_0^1 \int_0^1 v_2(x_2,x_3) \; dx_2 dx_3 & \\
\text{s.t.} & v_1(x_1,x_2) \geq 1 & \forall\:(x_1,x_2)\in\mbU_1\\
& v_2(x_2,x_3) \geq \displaystyle \int_0^1 v_1(x_1,x_2) \; dx_1 \quad & \forall\: (x_2,x_3) \in \mbU_2.
\end{array}$$
Thus, if $(v_1^k,v_2^k)_{k\in\N}$ is a minimizing sequence for this dual LP, then the sets {
$$\mbA^k := \left\{(x_1,x_2,x_3) \in [0,1]^3 \; : \; v_1^k(x_1,x_2) \geq 1 \ , \ v_2^k(x_2,x_3) \geq \int_0^1 v_1^k(x,x_2) dx \right\}$$}
are outer approximations of the set $\mbK$ and the sequences $(\vol\:\mbA^k)_k$ { and $(\int v_2^k \; d\lambda^2)$} decrease to $\vol\:\mbK$. Similar statements can be made for the general dual problem.
\end{rqe}

\begin{cor} \label{lincv}
\textbf{\emph{[Convergence of the linear computation scheme]}}\\
The Moment-SOS hierarchy associated to problem \eqref{lincomp} converges to $\vol \: \mbK$.
\end{cor}
\begin{proof}
We know that $\vol \: \mbK$ is the value of the infinite dimensional primal LP on measures \eqref{lincomp}. The absence of duality gap (which is a consequence of the strong duality property in \cite{sirev}) implies that there exists a sequence of continuous maps $(v_1^{(k)},\dots,v_m^{(k)})_{k\in\N}$ feasible for its dual LP on functions \eqref{dualeq} and such that $\int_{\mbX_m} v_m^{(k)}(\mbx_m) \; d\mbx_m \underset{k\to\infty}{\longrightarrow} \vol \ \mbK$. The Stone-Weierstrass Theorem allows to replace these continuous maps with vectors of polynomials while keeping the convergence of the optimization criterion. Eventually, Putinar's Positivstellensatz allows to replace the positivity constraints with SOS constraints while keeping the convergence of the optimization criterion. Thus, there is no relaxation gap between the dual \eqref{dualeq} and its SOS reinforcement. Since the Moment-SOS hierarchy only consists of restricting the feasible set to bounded degree to enforce finite dimension, we can conclude that it converges to $\vol \ \mbK$ when the upper bound on the degree tends to infinity.
\end{proof}
\color{black}
\begin{rqe}
The LP \eqref{lincomp}-\eqref{linspt} is formulated as a single problem on $m$ unknown measures. However, it is possible to split it in small chained subproblems to be solved in sequence. Each subproblem is associated with a {maximal} clique (in the linear clique tree)   
and it takes as input the results of the subproblem associated with its parent clique. This way, the sparse volume computation is split into $m$ linked low-dimension problems and solved sequentially. This may prove useful when $m$ is large because when solving
the SDP relaxations associated with the single LP \eqref{lincomp}-\eqref{linspt},
the SDP solver may encounter difficulties in handling a high number of measures 
simultaneously. It should be easier to sequentially solve a high number of low-dimensional problems with only one unknown measure. {Both formulations being strictly equivalent, this would not change the convergence properties of the sparse scheme.}
\end{rqe}

\subsection{Lower bounds for the volume}

As explained in the introduction, the hierarchy of SDP relaxations associated with our infinite-dimensional LP provides us with a sequence of upper bounds on $\vol\:\mbK$. One may also be interested in computing lower bounds on $\vol\:\mbK$.
In principle it suffices to apply the same methodology and 
approximate  from above the volume of $\mbB \setminus \mbK$ since $\mbK$ is included in the unit box $\mbB$. However, it is unclear whether $\mbB \setminus \mbK$ has also a sparse description. We show that this is actually the case and so one may exploit {correlative} sparsity to compute lower bounds 
although it is more technical. The following result is a consequence of Theorem \ref{linco}:

 \begin{cor}
If $\mbK$ is sparse, then $\widehat{\mbK} := \mbB \setminus \mbK$ is sparse as well, and
$\vol\:\widehat{\mbK}$ is the value of the LP problem
\[
\begin{array}{rlll}
& \max\limits_{\substack{\mu_{i,j} \in \mcM_+(\mbX_j) \\ 1 \leq i \leq j \leq m}} & \displaystyle\sum\limits_{j=1}^m \int_{\mbX_j} d\mu_{1,j} \\
&\text{s.t.}& \mu_{j,j} \leq \lambda^{m_j} \\
&& \mu_{i,j} \leq \mu_{i+1,j}^{\mbX_i\cap\mbX_{i+1}} \otimes \lambda^{n_{i}} &  i=1,\ldots,j-1 \\
&& \spt\:\mu_{i,j}  \subset \mbU_i & i=1,\ldots j-1 \\
&& \spt\:\mu_{j,j} \subset \mathrm{cl}\:\widehat{\mbU}_j
\end{array}
\]
where $m_j := \dim \mbX_j$, $n_i := \dim \mbX_{i+1}^\perp \cap \mbX_i$, $\widehat{\mbU}_j:=[0,1]^{m_j} \setminus \mbU_j$ is open and $\mathrm{cl}\:\widehat{\mbU}_j$ denotes its closure\footnote{This is necessary since the support of a measure is a closed set by definition.}. 
\end{cor}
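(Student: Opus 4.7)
The plan is to express $\widehat{\mbK}$ as a \emph{disjoint} union of $m$ sparse pieces indexed by the smallest index at which a defining inequality is violated, to apply Theorem \ref{linco} to each piece separately, and then to observe that the resulting $m$ independent LPs package into the single LP of the statement.

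First I would introduce the first-failure decomposition
$$\widehat{\mbK} \;=\; \bigsqcup_{j=1}^m W_j, \qquad W_j \;:=\; \mbB \,\cap\, (\mbX\setminus\mbK_j) \,\cap\, \bigcap_{i=1}^{j-1} \mbK_i.$$
This really is a partition: any $\mbx\in\widehat{\mbK}$ fails to lie in some $\mbK_i$, and the smallest such $i$ singles out the unique $W_j$ containing it. Up to a Lebesgue-null boundary, $W_j$ is defined by the constraints $\mbg_i(\pi_{\mbX_i}(\mbx))\geq 0$ for $i<j$ together with $\pi_{\mbX_j}(\mbx) \in \mathrm{cl}\:\widehat{\mbU}_j$; replacing $\widehat{\mbU}_j$ by its closure modifies only the real-algebraic hypersurface $\partial \mbU_j$, which has Lebesgue measure zero in $\mbX_j$ as soon as $\mbU_j$ has nonempty interior. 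Hence $W_j$ is itself a sparse semi-algebraic set whose clique tree is the prefix chain $C_1,\ldots,C_j$ of the original linear tree, and Assumptions \ref{ass-1}--\ref{ass-1b} are inherited from the original.

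Next I would apply Theorem \ref{linco} to each $W_j$ in turn, using $\mathrm{cl}\:\widehat{\mbU}_j$ in place of $\mbU_j$ at the top of the chain. Any coordinates that do not lie in $\mbX_1+\cdots+\mbX_j$ are unconstrained inside $[0,1]$ and contribute a factor of one, so the value returned by the theorem coincides with the ambient $n$-volume $\vol\:W_j$. This produces, for each $j$, an LP with unknowns $\mu_{1,j},\ldots,\mu_{j,j}$ (with $\mu_{i,j}$ a positive Borel measure on $\mbX_i$) satisfying $\mu_{i,j} \leq \mu_{i+1,j}^{\mbX_i\cap\mbX_{i+1}} \otimes \lambda^{n_i}$ for $i<j$ and $\mu_{j,j} \leq \lambda^{m_j}$, together with the support conditions $\spt\mu_{i,j}\subset\mbU_i$ for $i<j$ and $\spt\mu_{j,j}\subset\mathrm{cl}\:\widehat{\mbU}_j$. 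Its optimal value is $\int d\mu_{1,j} = \vol\:W_j$.

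Finally I would note that these $m$ LPs share no unknowns and no constraints, so they combine into a single LP whose objective is $\sum_{j=1}^m \int d\mu_{1,j}$ and whose optimal value is $\sum_{j=1}^m \vol\:W_j = \vol\:\widehat{\mbK}$, which is exactly the corollary. The sparsity of $\widehat{\mbK}$ itself is obtained as a by-product: it is the disjoint union of the sparse pieces $W_j$, each fitting on a sub-chain of the original clique tree. I expect the main obstacle to be book-keeping rather than conceptual: correctly threading the indexing of cliques and subspaces when the prefix chain $C_1,\ldots,C_j$ used for $W_j$ is a proper truncation of the original tree, and checking that the combined LP one obtains from the $m$ applications of Theorem \ref{linco} aligns term-by-term with the LP displayed in the statement.
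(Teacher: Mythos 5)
Your proposal is correct and matches the paper's own argument: the same first-failure disjoint decomposition of $\widehat{\mbK}$ into sparse pieces (the paper's $\mbV_j$, your $W_j$), followed by applying Theorem \ref{linco} to each piece and summing the resulting decoupled LPs. Your extra remarks on the closure of $\widehat{\mbU}_j$ affecting only a Lebesgue-null boundary and on the LPs sharing no unknowns simply make explicit points the paper leaves implicit.
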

\begin{proof}
The following description
$$\widehat{\mbK} = \bigsqcup\limits_{j=1}^m \left[ \bigcap\limits_{i=1}^{j-1} \mbU_i \cap \widehat{\mbU}_j \right] ,$$
where $\bigsqcup$ stands for disjoint union, is sparse. Indeed  the description of the basic semi-algebraic set
\[
\mbV_j := \bigcap\limits_{i=1}^{j-1} \mbU_i \cap \widehat{\mbU}_j
\]
is sparse. In addition, by $\sigma$-additivity of the Lebesgue measure, it holds
\[
\vol\:\widehat{\mbK} = \displaystyle\sum\limits_{j=1}^m \vol\:\mbV_j.
\]
Finally, by  using  Theorem \ref{linco}  we conclude that $\vol\:\mbV_j$ is the value of LP consisting of maximizing $\int_{\mbX_j} d\mu_{1,j}$ subject to the same constraints as in the above LP problem. Summing up yields the correct value.
\end{proof}

\section{Accelerating convergence}
\label{stokes}

As already mentioned, the convergence of the standard SDP relaxations 
\eqref{sdp-primal} for solving the GMP \eqref{general-lp} is expected to be slow in general.
To cope with this issue we introduce additional linear constraints that are redundant for the infinite dimensional GMP, and that are helpful to accelerate the convergence of the SDP relaxations. These constraints come from a specific application of Stokes' theorem.

\subsection{Dense Stokes constraints}

\label{fullstokes}

We first focus on the dense formulation \eqref{general-lp}. We know that the optimal measure of our infinite dimensional LP is $\mu = \lambda_\mbK$. Thus, one can put additional constraints in the hierarchy in order to give more information on the target sequence of moments, without increasing the dimension of the SDP relaxation. To keep the optimal value unchanged, such constraints should be redundant in the infinite dimensional LP. Ideally, we would like to characterize the whole set of polynomials $p$ such that
\[
\int_\mbK p(\mbx)\;d\mu(\mbx) = 0.
\]
Indeed, given any such polynomial $p$, the moments $\mbm$ of $\mu$ necessarily satisfy the linear constraint $L_\mbm(p) = 0$. However, for a general semi-algebraic set $\mbK$, we do not have an explicit description of this set of polynomials. Nevertheless, we can generate many of them, and hence improve convergence of the SDP relaxations significantly, as it was done originally in \cite{stocon} in another context. Let us explain how we generate these linear moment constraints.

We recall that Stokes' theorem states that if $\mcO$ is an open set of $\R^n$ with a boundary $\partial\mcO$ smooth almost everywhere, and $\omega$ is a $(n-1)$-differential form on $\R^n$, then one has
$$ \int_{\partial\mcO} \omega = \int_\mcO d \omega. $$
A corollary to this theorem is obtained by choosing $\omega(\mbx) = \mbh(\mbx) \cdot  \mbn(\mbx) \; \sigma(d\mbx)$, where the dot denotes the inner product between $\mbh$, a smooth (ideally polynomial) vector field, and $\mbn$ the outward pointing unit vector orthogonal to the boundary $\partial\mcO$, and $\sigma$ denotes the $(n-1)$-dimensional Hausdorff measure on $\partial\mcO$. In this case, one obtains the Gauss formula \cite{stokes}
$$ \int_{\partial\mcO} \mbh(\mbx)\cdot \mbn(\mbx)\;  \sigma(d\mbx) = \int_\mcO \dv\mbh(\mbx) \; d \mbx. $$
Choosing $\mbh(\mbx) = h(\mbx) \: \mbe_i$, where $h$ is a smooth function (ideally a polynomial) and $\mbe_i$ is the $i$-th vector of the canonical basis of $\R^n$, one obtains the following vector equality
$$ \int_{\partial\mcO} h(\mbx) \; \mbn(\mbx) \; d \sigma(\mbx) = \int_\mcO \grad h(\mbx) \; d \mbx. $$
Then, if we choose $\mcO = \mbK\setminus\partial\mbK$ and a polynomial $h$ vanishing on $\partial\mbK$, the vector constraint
\[
\int_\mbK \grad h(\mbx) \; d \mbx = 0
\]
is automatically satisfied and it can be added to the optimization problem \eqref{general-lp} without changing its optimal value. Such constraints are redundant in the infinite-dimensional LP formulation \eqref{general-lp} but not in the SDP relaxations \eqref{sdp-primal}. It has been numerically shown in \cite{stocon} that adding these constraints dramatically increases the convergence rate of the hierarchy of SDP relaxations.

These so-called \emph{Stokes constraints} can be added to the formulation of problem \eqref{general-lp} to yield
\begin{equation}\label{stokeslp}
\begin{array}{ll}
\displaystyle \max\limits_{\mu,\:\hat{\mu} \in \mcM_+(\mbB)} & \displaystyle \int d\mu  \\
\text{s.t.}& \mu + \hat{\mu} = \lambda_\mbB  \\
& (\grad h)\mu = \grad(h\mu)  \\
& \spt\:\mu \subset \mbK  \\
& \spt\:\hat{\mu} \subset \mbB
\end{array}
\end{equation}
where $h$ is any polynomial vanishing on the boundary of $\mbK$, without changing its value $\vol\:\mbK$.
The vector constraint $(\grad h)\mu = \grad(h\mu)$ should be understood in the sense of distributions, i.e. for all test functions $v \in {\mathscr C}^1(\mbB)$ it holds
\[
\int (\grad h) v \;d\mu = - \int (\grad v) h \; d\mu
\]
or equivalently
\[
\int \left((\grad h) v + (\grad v) h\right) d\mu = \int \grad(hv)\; d\mu = 0
\]
which becomes a linear moment constraint
\[
L_\mbm(\grad(hv)) = 0
\]
if $v$ is polynomial. In practice, when implementing the SDP relaxation of degree $d$, we choose $h(\mbx):=\prod_{j=1}^m g_j(\mbx)$ and $v(\mbx)=\mbx^{\alpha}$, $\alpha \in \N^n$, $|\alpha| \leq d+1-\sum_{j=1}^m\deg g_j$.

\begin{rqe} \label{rqstokes} The dual to the LP problem (\ref{stokeslp}) is the LP problem
\[
\begin{array}{rl}
\displaystyle \inf\limits_{\substack{v\in\mathscr{C}_+(\mbB) \\ \mbw\in\mathscr{C}(\mbB)^n}} & \displaystyle \int v\,d\lambda_\mbB \\
\mathrm{s.t.}  & v + \text{div}(h\mbw)\geq \ind_{\mbK}.
\end{array}
\]
It follows that the function $v$ is not required anymore to approximate from above the discontinuous indicator function $\ind_{\mbK}$, so that the Gibbs effect is reduced. {We believe that the infimum in this dual of Stokes is in fact a minimum, i.e. there exists optimal decision variables $v,\mbw$. This would make the Gibbs effect totally disappear. Proving this reduces to a problem of existence and uniqueness of the solution to a degenerate linear PDE, and it is out of the scope of this paper.}
\end{rqe}

\subsection{Sparse Stokes constraints{: the bicylinder}}

We have designed efficient Stokes constraints for the dense formulation of problem \eqref{general-lp}, at the price of introducing in problem \eqref{stokes} a polynomial $h$ vanishing on the boundary of $\mbK$. However, in the sparse case \eqref{sparpb}, the polynomial $h$ would destroy the sparsity structure, as it is the product of all polynomials defining $\mbK$. So we must adapt our strategy to introduce sparse Stokes constraints.

In this section, to keep the notations simple, we illustrate the ideas on our introductive bicylinder example of Section \ref{bicylinder}. Considering the optimal measures $\mu_1$ and $\mu_2$ defined in \eqref{mu},\eqref{nu}, we can apply Stokes constraints derived from the Gauss formula, in the directions in which they are Lebesgue: for $\mu_1$ in the $x_1$ direction and for $\mu_2$ in the remaining directions. To see this, define
{
\begin{eqnarray*}
\mbh_1(x_1,x_2) &=& g_1(x_1,x_2) \; \mbe_1,\\
\mbh_2(x_2,x_3) &=& g_2(x_2,x_3) \; \mbe_2,\\
\mbh_3(x_2,x_3) &=& g_2(x_2,x_3) \; \mbe_3
\end{eqnarray*}}
where $g_i(x_i,x_{i+1})=1-x_i^2-x_{i+1}^2$, such that $\mbh_1\cdot\mbn_{\mbU_1}$ vanishes on the boundary of $\mbU_1$ and $\mbh_2\cdot\mbn_{\mbU_2}$ and $\mbh_3\cdot\mbn_{\mbU_2}$ vanish on the boundary of $\mbU_2$, where $\mbn_{\mbU_i}$ is the outward point vector orthogonal to the boundary of  ${\mbU_i}$. For $i,j,k \in \N$, the Gauss formula yields
{
\begin{eqnarray*}
\int_{\mbU_1} \dfrac{\partial}{\partial x_1} (g_1(x_1,x_2)x_1^{i}x_2^{j}) \; d\mu_1= 0,\\
\int_{\mbU_2} \dfrac{\partial}{\partial x_2} (g_2(x_2,x_3)x_2^{j}x_3^{k}) \; d\mu_2 = 0,\\
\int_{\mbU_2} \dfrac{\partial}{\partial x_3} (g_2(x_2,x_3)x_2^{j}x_3^{k}) \; d\mu_2 = 0.\\
\end{eqnarray*}}
Hence, adding these constraints does not change the optimal value of the LP problem \eqref{sparpb}.

\subsection{General sparse Stokes constraints}

Consider the sequential decomposition of Theorem \ref{linco}:

\begin{eqnarray*}
&\max\limits_{\mu_i \in \mcM_+(\mbX_i)} & \displaystyle\int d\mu_i \label{lincompseq} \\
&\text{s.t.} & \mu_i \leq \mu_{i+1}^{\mbX_i \cap \mbX_{i+1}} \otimes \lambda^{n_i} \label{lindomseq} \\
&& \spt\:\mu_{i} \subset \mbU_i \label{linsptseq}
\end{eqnarray*}
for $1 \leq i \leq m-1$, and
\begin{eqnarray*}
&\max\limits_{\mu_m \in \mcM_+(\mbX_m)} & \displaystyle\int d\mu_m \\
&\text{s.t.} & \mu_{m} \leq \lambda^{n_m}\\
&& \spt\:\mu_{m} \subset \mbU_m.
\end{eqnarray*}

Our algorithm consists of sequentially solving these problems, starting with determining $\mu_m$, then $\mu_{m-1}$, and so on until $\mu_1$, whose mass will be $\vol(\mbK)$. We implement Stokes constraints on each one of these problems. For the problem in $\mu_m$, we implement regular Stokes constraints as in section \ref{fullstokes}:
\begin{eqnarray*}
&\max\limits_{\mu_m \in \mcM_+(\mbX_m)} & \displaystyle\int d\mu_m \\
&\text{s.t.} & \mu_{m} \leq \lambda^{n_m}\\
&& (\grad h_m)\mu_m = (\grad h_m\mu_m) \\
&& \spt\:\mu_{m} \subset \mbU_m
\end{eqnarray*}
where $h_m$ is a polynomial vanishing on $\partial \mbU_m$.

Then, let $i \in \{1 , \dots , m-1\}$ and suppose that $\mu_{i+1}$ is known, such that determining $\mu_i$ is reduced to a linear programming problem. From the arguments of Section \ref{LCT}, we know that the optimal measure $\mu_i$ is supported on $\mbU_i$ and that on this set it is the product measure between $\mu_{i+1}^{\mbX_i\cap\mbX_{i+1}}$ and the uniform measure on $\mbY_i = \langle x_j \rangle_{j \in C_i  \cap C_{i+1}^c} $. Since Stokes' theorem is only valid for uniform measures, it will only apply to $\mu_i^{\mbY_i}$.

Let $J \subset \{1,\dots,n\}$. For $f \in \mcC^1(\R^n)$ we define
$$\nabla_Jf := \left(\frac{\partial f}{\partial x_j}\right)_{j\in J}$$
such that $\nabla_{\{1,\dots,n\}}f = \grad f$ and $\nabla_{\{j\}}f = \frac{\partial f}{\partial x_j}$ for example. This notation allows us to define Stokes constraints exactly in the directions we are interested in and to formulate the general sparse Stokes constraints:
\begin{eqnarray*}
&\max\limits_{\mu_i \in \mcM_+(\mbX_i)} & \displaystyle\int d\mu_i \label{lincompseqstokes} \\
&\text{s.t.} & \mu_i \leq \mu_{i+1}^{\mbX_i \cap \mbX_{i+1}} \otimes \lambda^{n_i} \label{lindomseqstokes} \\
&& (\nabla_{C_i\cap C_{i+1}^c} h_i) \mu_i = (\nabla_{C_i\cap C_{i+1}^c} h_i \mu_i) \label{linstokeseq} \\
&& \spt\:\mu_{i} \subset \mbU_i \label{linsptseqstokes}
\end{eqnarray*}
where $h_i$ is a polynomial vanishing on $\partial \mbU_i$.

\begin{rqe} In some cases, in both dense and sparse contexts, these Stokes constraints can be slightly improved by choosing a different polynomial $h_j$ for each basis vector $\mbe_j$ when applying the Gauss formula, such that $h_j$ can be taken with the lowest possible degree, allowing for a better implementation of the hierarchy. For example, if one is looking for the volume of $\mbK := [0,1]^2$, the polynomial vanishing on $\partial\mbK$ with the lowest degree is $h(x_1,x_2) := x_1(1-x_1)x_2(1-x_2)$, but one can formulate Stokes constraints by applying the Gauss formula to $x_1(1-x_1) \mbe_1$ and $x_2(1-x_2) \mbe_2$, instead of $h(x_1,x_2) \mbe_1$ and $h(x_1,x_2) \mbe_2$. By doing so, one would replace the constraint $(\grad h)\mu = \grad(h\mu)$ with $\left(\frac{\partial h_j}{\partial x_j}\right)\mu = \frac{\partial}{\partial x_j}\left(h_j\mu\right)$ for every possible $j$. This is what we actually implemented in our numerical examples, but we presented the Stokes constraints in the restrictive case of $h_j = h$ for all $j$ for the sake of readability.
\end{rqe}

\section{Numerical examples}

\label{secex}

\color{black}

\subsection{Bicylinder revisited}

We refer to \eqref{general-lp} as the dense problem and to \eqref{sparpb} as the sparse problem. For both problems, we consider instances with and without additional Stokes constraints. 
Note that for the bicylinder example of Section \ref{bicylinder} the optimal value for both the dense and the sparse problem is
\[
\vol\:\mbK = \frac{16}{3} \approx 5.3333
\]
since adding Stokes constraints does not change the optimal value. 

We solve the SDP relaxations with Mosek on a standard laptop, for various relaxation orders and we report the bounds and the computation times in Table \ref{tab:ex1}.
\begin{table}[h!]
\begin{center}
\begin{tabular}{|r|l|l|l|l|}
\hline
	& \multicolumn{2}{c|}{full}& \multicolumn{2}{c|}{sparse}\\
$d$	& without Stokes		& with Stokes 	& without Stokes	& with Stokes\\
\hline
$2$& 7.8232 (1.0s)	& 5,828 (1.1s)	& 7,7424 (1.1s)	& 5,4984 (1.1s)\\
$3$& 7.2368 (0.9s)	& 5,4200 (1.3s)	& 7,1920 (0.9s)	& 5,3488 (1.1s)\\
$4$& 7.0496 (1.4s)	& 5,3520 (2.2s)	& 7,0040 (1.2s)	& 5,3376 (1.2s)\\
$5$& 6,8136 (3.1s)	& 5,3400 (4.4s)	& 6,7944 (1.8s)	& 5,3352 (1.8s)\\
$6$& 6,7376 (7.2s)	& 5,3376 (8.2s)	& 6,6960 (2.1s)	& 5,3344 (2.3s)\\
$7$& 6,6336 (12.8s)	& 5,3360 (18.3s) & 6,6168 (2.6s) & 5,3344 (3.2s)\\
\hline
\end{tabular}
\caption{Bounds on the volume (and computation times in seconds) vs relaxation order for the bicylinder.}
\label{tab:ex1}
\end{center}
\end{table}
We observe a slow convergence for the dense and the sparse versions without Stokes constraints, and a much faster convergence with Stokes constraints.  We also observe significantly smaller computation times when using the sparse formulation.

\subsection{A nonconvex set}

Let $\mbX := \R^5$, $\mbX_1 = \langle x_1,x_2\rangle$, $\mbX_2 = \langle x_1,x_3\rangle$, $\mbX_3 = \langle x_1,x_4\rangle$, $\mbX_4 = \langle x_1,x_5\rangle$ and
\begin{itemize}
\item $\mbg_i(x_1,x_{i+1}) := (2x_1^2 - x_{1+i}^2 - 1 \: , \: x_1\:(1-x_1) \: , \: x_{i+1}\:(1-x_{i+1})), \quad i = 1, \ldots, 4$
\item $\mbU_i := \mbg_i^{-1}\left((\R_+)^3\right) = \{(x_1,x_{i+1})\in [0,1]^2 : 2x_1^2 - x_{i+1}^2 \geq 1 \}$, $i = 1, \ldots, 4$.
\end{itemize}
Let us approximate the volume of the sparse set
$$ \mbK := \left\{(x_1,x_2,x_3,x_4,x_5) \in [0,1]^5 : 2x_1^2-x_{i+1}^2 \geq 1, \: i=1,\ldots,4\right\} = \bigcap\limits_{i=1}^4 \pi_{\mbX_i}^{-1}\left(\mbU_i\right).$$
Here the coordinates $x_2$, $x_3$, $x_4$ and $x_5$ do not interact: they are only linked with the coordinate $x_1$. The proper way to apply our linear computation Theorem \ref{linco} is to define a linear clique tree as shown in Figure \ref{nobranch}.

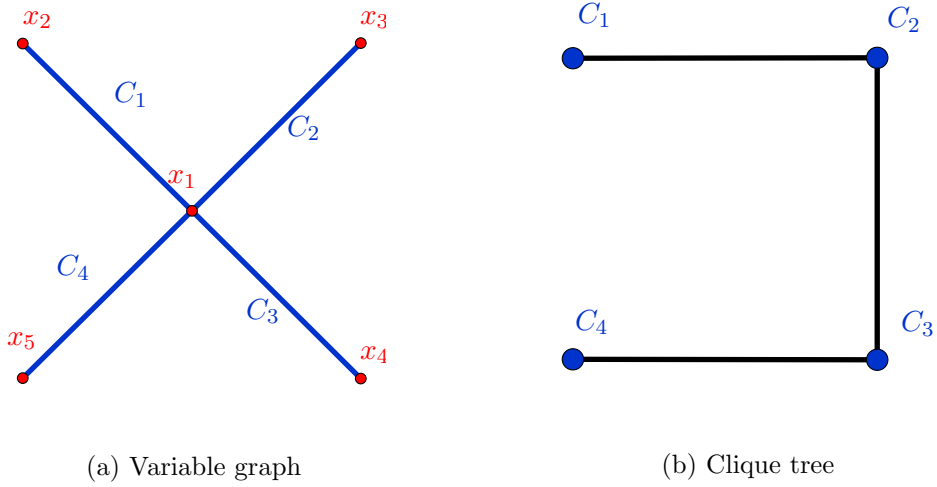
\begin{figure}[!h]
\begin{center}
\begin{subfigure}{0.45\textwidth}
\begin{center}
\begin{tikzpicture}[line cap=round,line join=round,>=triangle 45,x=1.0cm,y=1.0cm]
\clip(-2.5,-3.) rectangle (2.55,3.);
\draw [line width=2.pt,color=qqttcc] (0.,0.)-- (2.22096761177982,2.223980704890109);
\draw [line width=2.pt,color=qqttcc] (0.,0.)-- (2.22096761177982,-2.223980704890109);
\draw [line width=2.pt,color=qqttcc] (0.,0.)-- (-2.226989710267781,2.217950436475055);
\draw [line width=2.pt,color=qqttcc] (0.,0.)-- (-2.226989710267781,-2.217950436475055);
\draw [fill=ffqqqq] (0.,0.) circle (2.0pt);
\draw[color=ffqqqq] (-0.128576146772983,0.42019711072555577) node {$x_1$};
\draw [fill=ffqqqq] (2.22096761177982,2.223980704890109) circle (2.0pt);
\draw[color=ffqqqq] (2.405971057335158,2.529236390056418) node {$x_3$};
\draw [fill=ffqqqq] (2.22096761177982,-2.223980704890109) circle (2.0pt);
\draw[color=ffqqqq] (2.405971057335158,-1.9108463032717127) node {$x_4$};
\draw[color=qqttcc] (1.4717036572806974,1.0954596870025424) node {$C_2$};
\draw[color=qqttcc] (0.935193665170215,-1.3280854497723955) node {$C_3$};
\draw [fill=ffqqqq] (-2.226989710267781,2.217950436475055) circle (2.0pt);
\draw[color=ffqqqq] (-2.0341116359929727,2.529236390056418) node {$x_2$};
\draw [fill=ffqqqq] (-2.226989710267781,-2.217950436475055) circle (2.0pt);
\draw[color=ffqqqq] (-2.237615426103845,-1.725842857716374) node {$x_5$};
\draw[color=qqttcc] (-0.8038387230499695,1.5394679563353555) node {$C_1$};
\draw[color=qqttcc] (-1.5623528498268586,-0.7545747685508455) node {$C_4$};
\end{tikzpicture}
\caption{Variable graph}
\end{center}
\end{subfigure}
\begin{subfigure}{0.45\textwidth}
\begin{center}
\vspace*{-1.7em}
\begin{tikzpicture}[line cap=round,line join=round,>=triangle 45,x=1.0cm,y=1.0cm,scale=2]
\clip(-1.5,-1.5) rectangle (1.8,1.8);
\draw [line width=2.pt] (-1.00249256081015,0.9979729245624902)-- (0.9994817385567067,1.0009882851118512);
\draw [line width=2.pt] (0.9994817385567067,1.0009882851118512)-- (0.9994817385567067,-1.0009882851118512);
\draw [line width=2.pt] (0.9994817385567067,-1.0009882851118512)-- (-1.00249256081015,-0.9979729245624902);
\draw [fill=qqttcc] (-1.00249256081015,0.9979729245624902) circle (2.0pt);
\draw[color=qqttcc] (-0.8593397567165706,1.2839669226687537) node {$C_1$};
\draw [fill=qqttcc] (0.9994817385567067,1.0009882851118512) circle (2.0pt);
\draw[color=qqttcc] (1.175698144392156,1.2544645107800166) node {$C_2$};
\draw [fill=qqttcc] (0.9994817385567067,-1.0009882851118512) circle (2.0pt);
\draw[color=qqttcc] (1.2681998671698254,-0.762073045773176) node {$C_3$};
\draw [fill=qqttcc] (-1.00249256081015,-0.9979729245624902) circle (2.0pt);
\draw[color=qqttcc] (-0.8778401012721045,-0.7510709784399728) node {$C_4$};
\end{tikzpicture}
\caption{Clique tree}
\end{center}
\end{subfigure}
\caption{Graph with linear clique tree for the nonconvex set.}\label{nobranch}
\end{center}
\end{figure}
This yields the following formulation
\begin{eqnarray}
\vol \: \mbK &=& \max\limits_{\substack{\mu_i \in \mcM_+(\mbX_i) \\ i=1,\ldots,4}} \int_{\mbX_1} d \mu_1 \label{nobpb} \\
&\text{s.t.}& d\mu_1(x_1,x_2) \leq d\mu^{\langle x_1\rangle}_2(x_1) \; d x_2 \nonumber \\
&& d\mu_2(x_1,x_3) \leq d\mu^{\langle x_1\rangle}_3(x_1) \; d x_3 \nonumber \\
&& d\mu_3(x_1,x_4) \leq d\mu^{\langle x_1\rangle}_4(x_1) \; d x_4 \nonumber \\
&& d\mu_4(x_1,x_5) \leq d x_1 \; d x_5 \nonumber \\
&& \spt \; \mu_i \subset \mbU_i \quad\quad i=1,\ldots,4\nonumber 
\end{eqnarray}
with Stokes constraints
\small
\begin{eqnarray*}
\frac{\partial}{\partial x_2} \left[(2x_1^2-x_2^2-1)\: x_2\: (1-x_2) \vphantom{\sum} \right] \; d\mu_1(x_1,x_2) &=& \frac{\partial}{\partial x_2} \left[ (2x_1^2-x_2^2-1)\: x_2\: (1-x_2) \; d\mu_1(x_1,x_2) \right] \\
\frac{\partial}{\partial x_3} \left[(2x_1^2-x_3^2-1)\: x_3\: (1-x_3) \vphantom{\sum} \right] \; d\mu_2(x_1,x_3) &=& \frac{\partial}{\partial x_3} \left[ (2x_1^2-x_3^2-1)\: x_3\: (1-x_3) \; d\mu_2(x_1,x_3) \right] \\
\frac{\partial}{\partial x_4} \left[(2x_1^2-x_4^2-1)\: x_4\: (1-x_4) \vphantom{\sum} \right] \; d\mu_3(x_1,x_4) &=& \frac{\partial}{\partial x_4} \left[ (2x_1^2-x_4^2-1)\: x_4\: (1-x_4) \; d\mu_3(x_1,x_4) \right] \\
\frac{\partial}{\partial x_5} \left[(2x_1^2-x_5^2-1)\: x_5\: (1-x_5) \vphantom{\sum} \right] \; d\mu_4(x_1,x_5) &=& \frac{\partial}{\partial x_5} \left[ (2x_1^2-x_5^2-1)\: x_5\: (1-x_5) \; d\mu_4(x_1,x_5) \right] \\
\frac{\partial}{\partial x_1} \left[(2x_1^2-x_5^2-1)\: x_1\: (1-x_1) \vphantom{\sum} \right] \; d\mu_4(x_1,x_5) &=& \frac{\partial}{\partial x_1} \left[ (2x_1^2-x_5^2-1)\: x_1\: (1-x_1) \; d\mu_4(x_1,x_5) \right].
\end{eqnarray*}
\normalsize
We can compute analytically
$$ \vol \: \mbK = \frac{1}{15}\left(7-4\sqrt{2}\right) \simeq 0.0895.$$

On Figure \ref{fig:nonconvex} we show results from solving several relaxations via the dense and the sparse approach, with and without Stokes constraints. While solving with Mosek the degree $12$ dense relaxation  took about $1000$ seconds, solving the degree $12$ sparse relaxation  took less than $10$ seconds. With the sparse relaxations it was possible to go much higher in the hierarchy. Figure \ref{fig:nonconvex_val} shows convincingly how Stokes constraints accelerate the convergence of the hierarchy. We can also observe that  the nonconvexity of $\mbK$ poses no special difficulty for the volume computation.

\begin{figure}[!h]
\begin{subfigure}{0.5\textwidth}
\includegraphics[width=\textwidth]{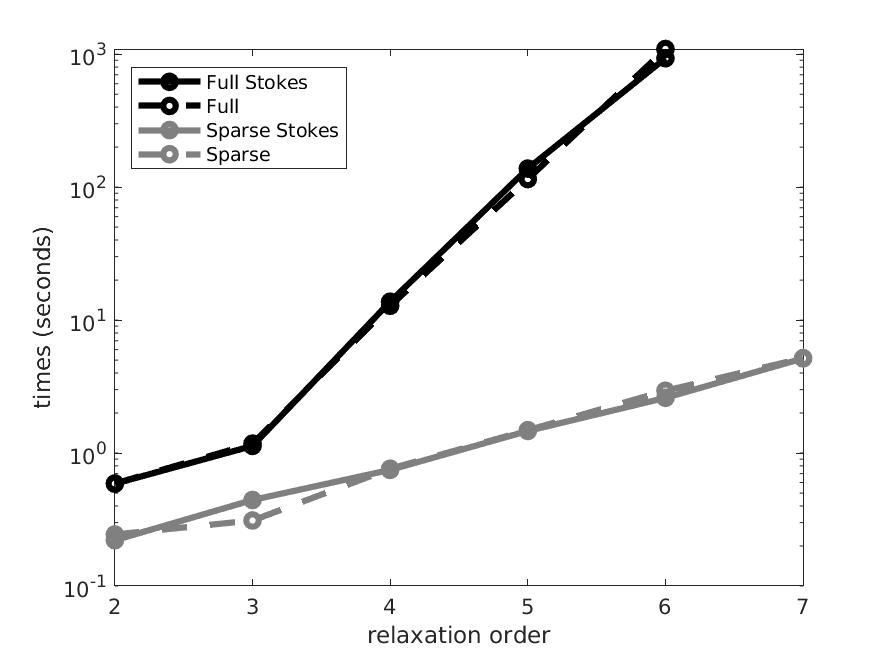}
\caption{Computation time vs relaxation order.}
\label{fig:nonconvex_tim}
\end{subfigure}
\hfill
\begin{subfigure}{0.5\textwidth}
\includegraphics[width=\textwidth]{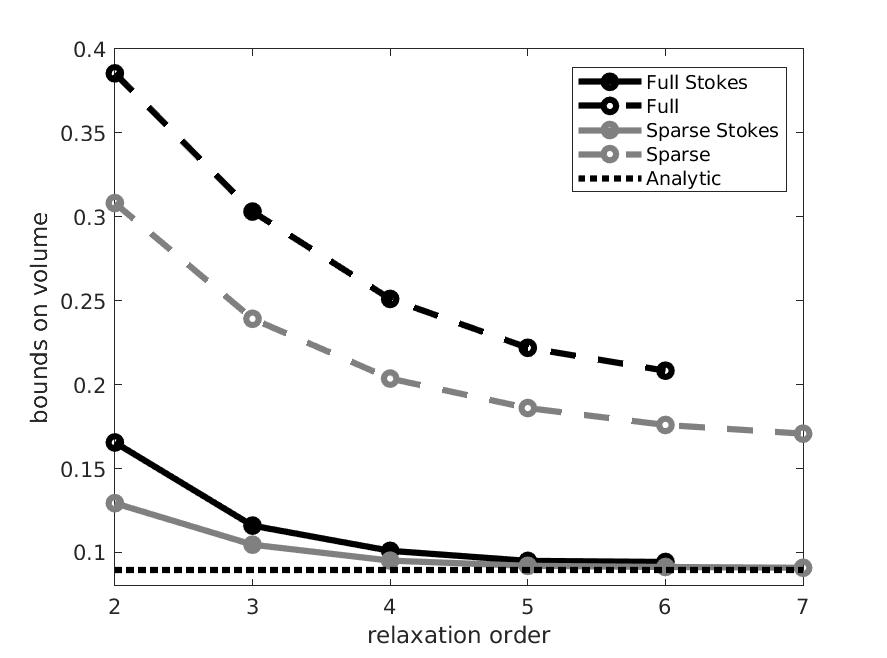}
\caption{Bounds on the volume vs relaxation order.}
\label{fig:nonconvex_val}
\end{subfigure}
\caption{Performance for the nonconvex set.}
\label{fig:nonconvex}
\end{figure}

\subsection{A high dimensional polytope}

Consider 
$$\mbK_n := \{\mbx \in [0,1]^n \; :\: x_i + x_{i+1}  \leq 1 , \, i=1,\ldots,n-1 \}.$$
According to \cite{polytope}, for any $\theta \in ]-\frac{\pi}{2}, \frac{\pi}{2}[$, one has the elegant formula :
$$ \tan \theta + \sec \theta = 1 + \sum_{n=1}^\infty \vol\:\mbK_n \: \theta^n $$
which allows to compute analytically the volume for $n$ arbitrarily large. For example when $n=20$ we obtain
\[
\vol\:\mbK_{20} = \frac{14814847529501}{97316080327065600} \approx 1.522 \cdot 10^{-4}.
\]

From the SDP viewpoint, $\vol \: \mbK_n$ is computed by solving relaxations of the LP problem given in Theorem \ref{linco} where $m = n-1$, $\mbX_i = \langle x_i,x_{i+1}\rangle$ and $\mbg_i(x_i,x_{i+1}) = (x_i,x_{i+1},1 - x_i - x_{i+1})$, $i=1,\ldots,n-1$.\\

We implemented the volume computation algorithm for $n = 20$, with Stokes contraints. This cannot be achieved without resorting to sparse computation as the dimension is too high for regular SDP solvers. With the sparse formulation however we could solve relaxations up to degree $28$ in less than $100$ seconds, see Figure \ref{fig:highdim}. Note however, that the analytic volume is of the order of $10^{-4}$. In consequence we observe a non monotonicity of the relaxation values which contradicts the theory. This issue is surprising as the Mosek SDP solver terminates without reporting issues. This indicates that computing small volumes in large dimension can be numerically sensitive.

\begin{figure}[!h]
\begin{center}
\includegraphics[width=0.5\textwidth]{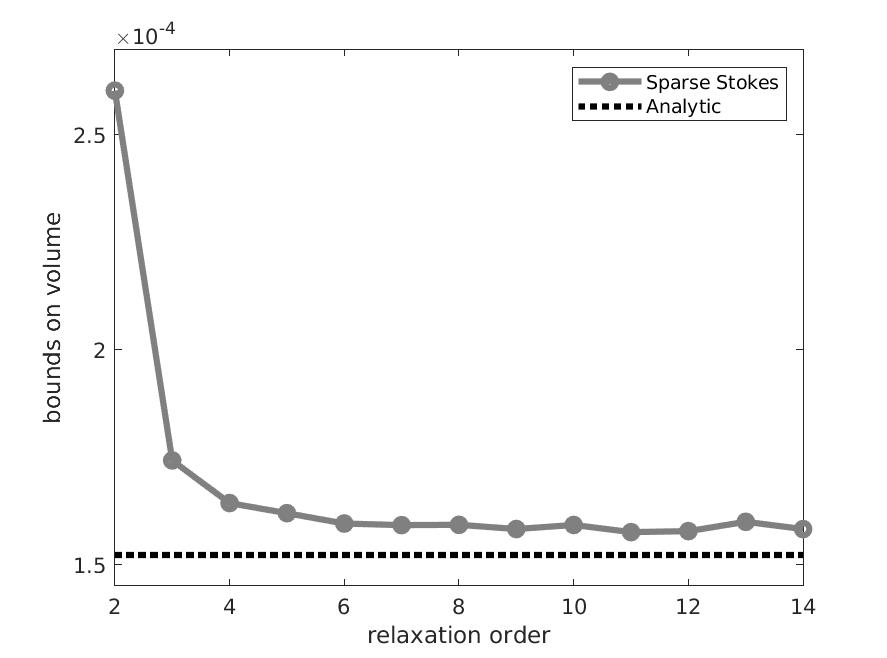}
\caption{Bounds on the volume vs relaxation order for the high dimensional polytope.}
\label{fig:highdim}
\end{center}
\end{figure}

In order to fix the monotonicity issue, we added a sparse rescaling to our problem. The idea is the following: at each step of the algorithm, the mass of the measure $\mu_i$ is less than the mass of the reference measure
\[
\rho_i := \mu^{\mbX_i \cap \mbX_{i+1}}_{i+1} \otimes \lambda^{n_i}.
\]
Defining
\[
\epsilon_i := \frac{\vert \mu_i \vert}{\vert \rho_i \vert} \in ]0,1[,
\]
we obtain that
\[
\vol \: \mbK = \prod\limits_{i=1}^m \epsilon_i
\]
as a telescoping product, since $\vert \rho_m \vert = \vol\:\mbB = 1$. As a result, if $m$ is large and the $\epsilon_i$ are small, one can expect the volume to be very small, which explains why the SDP solver encounters difficulties. Thus, a solution is to multiply each domination constraint by a well-chosen rescaling factor $\epsilon$ such that the mass of $\mu_i$ does not decrease too much. The resulting LP is as follows
\begin{eqnarray}
\vol\:\mbK &=& \epsilon^{m-1} \: \max\limits_{\substack{\mu_i \in \mcM_+(\mbX_i) \\ i=1,\ldots,m}} \int_{\mbX_1} d\mu_1 \label{hdlincomp} \\
&\text{s.t.}& \epsilon \: \mu_i \leq \mu_{i+1}^{\mbX_i \cap \mbX_{i+1}} \otimes \lambda^{n_i} \qquad i=1,\ldots,m-1 \nonumber \\
&& \mu_m \leq \lambda^{n_m} \nonumber \\
&& \spt \; \mu_i \subset \mbU_i \qquad\qquad\qquad\;\;\; i = 1, \ldots, m. \nonumber
\end{eqnarray}
Figure \ref{rescal} gives a comparison between the results obtained with and without sparse rescaling, using the SDP Solvers SeDuMi and Mosek, for the choice $\epsilon = \frac{1}{2}$.

\begin{figure}
\begin{subfigure}{0.5\textwidth}
\includegraphics[width=\textwidth]{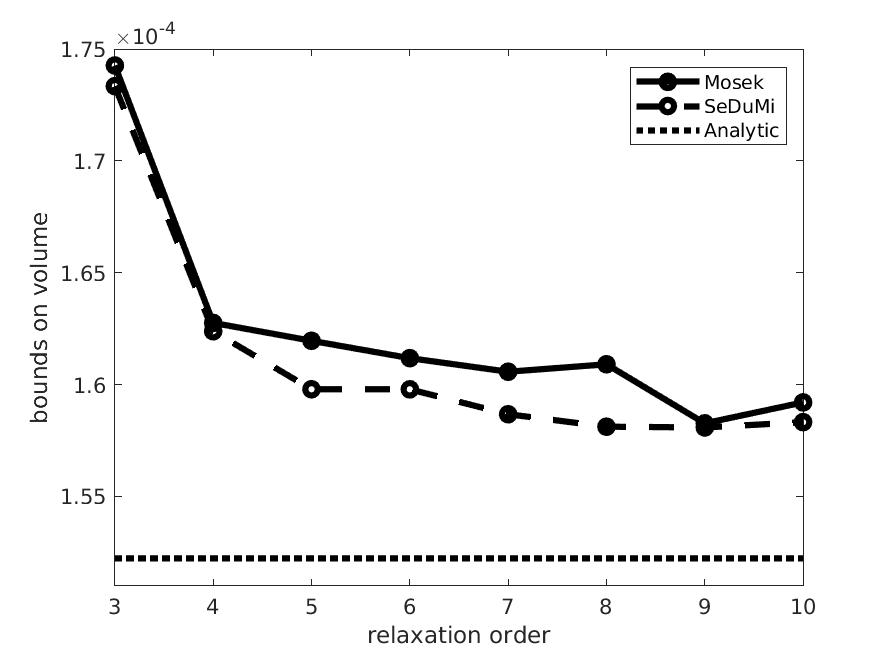}
\caption{Without rescaling.}
\label{fig:non_rescal}
\end{subfigure}
\hfill
\begin{subfigure}{0.5\textwidth}
\includegraphics[width=\textwidth]{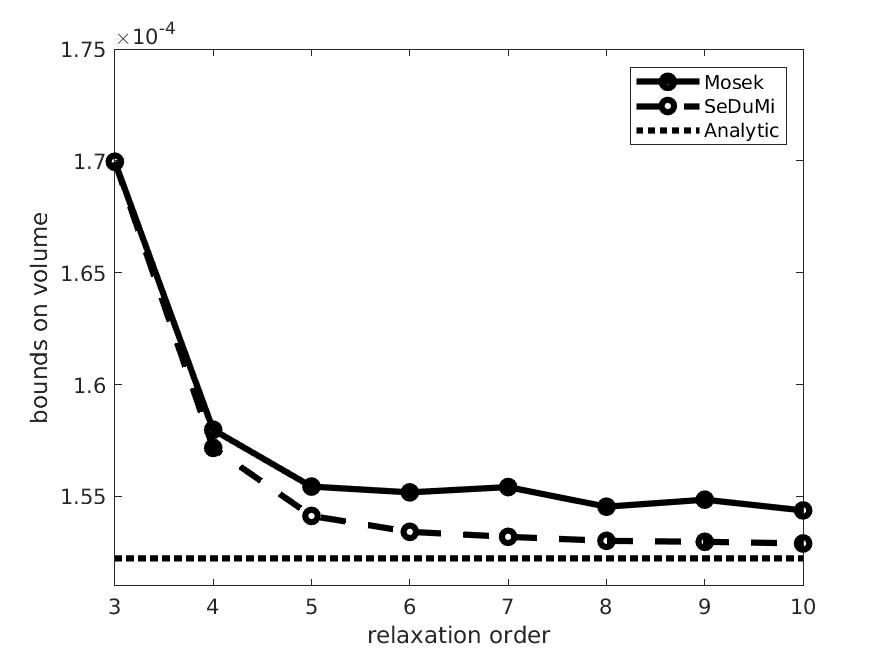}
\caption{With rescaling.}
\label{fig:rescal}
\end{subfigure}
\caption{Bounds on the volume vs relaxation order for the high dimensional polytope.}
\label{rescal}
\end{figure}

First, one can see that without rescaling (Figure \ref{fig:non_rescal}), both SeDuMi and Mosek have accuracy issues that make them lose monotonicity, while the rescaling (Figure \ref{fig:rescal}) allows to recover monotonicity at least when using SeDuMi (which is slower but more accurate than Mosek to our general experience). Second, it is clear that the relative approximation error is much smaller with scaling. This, combined to the fact that the error is relative (after rescaling, the error is much smaller), demonstrates the power of our rescaling method.

\subsection{A nonconvex high dimensional set}

\label{sec:highnonconvex}

Finally, we consider {the set already mentioned in the introduction,} which is both nonconvex and high dimensional. Let 
\[
\mbK_n := \left \{\mbx \in [0,1]^n \; : \; x_{i+1} \ x_{i} \leq 1/2,\:i=1,2,\ldots,n-1 \right \}
\]
whose analytic volume is a function of the dimension $n$. For $n = 3$ the analytic volume is $0.75$, for $n = 4$ it is $0.6566$, approximately. In higher dimensions we do not have an analytic expression for the volume. However, in order to get a feeling for its value for bigger $n$, we ran a Monte Carlo simulation\footnote{We provide a quick introduction in Appendix \ref{app:MC}.} with one million samples for $n = 10, 20, 50$, and $100$.

Before we go on, let us emphasize that the method proposed in this paper is not in concurrence with the Monte Carlo approach. While the Monte Carlo gives a \textit{probabilistic} estimate of the volume, our method provides a \textit{guaranteed} upper bound. Nonetheless, it would be concerning if the computed upper bound was much smaller than the confidence interval, and we consider our approximation valid, when it returns something in the order of the Monte Carlo approximation. The results for different dimensions $n$ and solved with the Mosek SDP solver are summarized in Table \ref{tab:highdimnonconvex}.
As in the previous section we experience accuracy issues for the relaxations of order $14$ and $n = 20, 100$, as well as for order $16$ and $n=50$. Otherwise, the approximations provide better upper bounds for increased relaxation orders as expected. 
For $n=3,4$ the approximation is reasonably close to the analytic value. For $n=10,20,50$ our scheme provides an upper bound in the same order of magnitude as the 99\%-confidence interval of the Monte Carlo simulation. We interpret this as a validation for both the Monte Carlo approach and our own approach. 
For $n = 100$ we could not derive a meaningful confidence interval. Indeed, as our approximation shows, the volume for $n=100$ is less than $9\cdot 10^{-6}$. In order to get an accuracy of $\varepsilon = 10^{-6}$ one would have to draw approximately $N = \frac{1}{\varepsilon^2} = 10^{12}$ samples. With our non-sophisticated implementation, the Monte Carlo simulation for one million points took about 5 seconds. Extending this linearly to a simulation with $10^{12}$ samples would therefore take a little less than 2 months ($5\cdot 10^6 \ s \simeq 1389 \ h \simeq 58 \ d$). With more sophisticated methods, this time could certainly be reduced dramatically. However, it sets the 44 minutes it took to solve relaxation order 16 for $n=100$ into perspective. 
    \begin{table}
\begin{center}
\begin{tabular}{|c|r|r|r|r|r|r|}
\hline
	& \multicolumn{2}{c|}{n=3}& \multicolumn{2}{c|}{n = 4}& \multicolumn{2}{c|}{n=10} \\
    $d$	& value & time (s)	& value & time (s)	& value & time (s)	\\
    \hline
    4   &	7.86E-01	&	 0.95	&	7.09E-01	&	 0.61	&	3.93E-01	&	 1.24	\\
    5   &	7.73E-01	&	 2.87	&	6.90E-01	&	 0.78	&	3.57E-01	&	 1.84	\\
    6   & 	7.69E-01	&	 2.74	&	6.84E-01	&	 0.86	&	3.45E-01	&	 4.21	\\
    7   &	7.66E-01	&	 4.58	&	6.79E-01	&	 1.83	&	3.38E-01	&	 4.55	\\
    8   &	7.63E-01	&	 5.00	&	6.77E-01	&	 2.29	&	3.34E-01	&	 5.97	\\
    9   &	7.63E-01	&	 6.11	&	6.74E-01	&	 3.33	&	3.30E-01	&	11.56	\\
    10  &	7.62E-01	&	 9.83	&	6.73E-01	&	 6.86	&	3.26E-01	&	18.21	\\
    11  &	7.61E-01	&	18.16	&	6.72E-01	&	 8.57	&	3.26E-01	&	22.24	\\
    12  &	7.60E-01	&	19.45	&	6.71E-01	&	10.43	&	3.23E-01	&	33.78	\\
    13  &	7.60E-01	&	22.49	&	6.70E-01	&	17.89	&	3.22E-01	&	74.00	\\
    14  &	7.60E-01	&	27.02	&	6.69E-01	&	26.84	&	3.21E-01	&	79.68	\\
    15  &	7.59E-01	&	32.90	&	6.69E-01	&	39.25	&	3.20E-01	&	119.7	\\
    16  &	7.58E-01	&	78.20	&	6.68E-01	&	61.32	&	3.19E-01	&	176.6	\\
    \hline
    ana/mc& 7.50E-01 & - & 6.57E-01 & - & \multicolumn{2}{c|}{[2.99e-01, 3.03e-01]} \\
    \hline
\end{tabular}
\begin{tabular}{|c|r|r|r|r|r|r|r|r|r|r|r|r|}
\hline
	& \multicolumn{2}{c|}{n = 20} & \multicolumn{2}{c|}{n=50}& \multicolumn{2}{c|}{n = 100}\\
    $d$	& value & time (s)	& value & time (s)	& value & time (s)	\\
    \hline
    4   &	1.47E-01	&	  5.11	&	7.68E-03	&	 10.64	&	9.49E-05	&	  15.19	\\
    5   &	1.20E-01	&	  3.58	&	4.78E-03	&	 15.10	&	4.80E-05	&	  26.87	\\
    6   & 	1.11E-01	&	  8.13	&	3.86E-03	&	 21.75	&	2.84E-05	&	  49.89	\\
    7   &	1.07E-01	&	 11.06	&	3.42E-03	&	 48.31	&	2.27E-05	&	  77.07	\\
    8   &	1.03E-01	&	 17.93	&	3.20E-03	&	 72.78	&	1.91E-05	&	 135.08	\\
    9   &	1.00E-01	&	 33.31	&	2.99E-03	&	120.49	&	1.63E-05	&	 202.12	\\
    10  &	9.81E-02	&	 41.02	&	2.99E-03	&	103.61	&	1.44E-05	&	 299.44	\\
    11  &	9.70E-02	&	 85.83	&	2.89E-03	&	165.56	&	1.22E-05	&	 441.67	\\
    12  &	9.59E-02	&	117.08	&	2.77E-03	&	220.84	&	1.19E-05	&	 623.24	\\
    13  &	9.51E-02	&	138.38	&	2.67E-03	&	314.00	&	1.08E-05	&	 850.92	\\
    14  &	9.57E-02	&	156.32	&	2.60E-03	&	457.92	&	1.10E-05	&	1175.02	\\
    15  &	9.39E-02	&	249.82	&	2.54E-03	&	685.64	&	9.86E-06	&	1589.49	\\
    16  &	9.36E-02	&	357.87	&	2.56E-03	&	859.60	&	9.46E-06	&	2623.02	\\
    \hline
    ana/mc& \multicolumn{2}{c|}{[8.09e-02, 8.24e-02]} & \multicolumn{2}{c|}{[1.48e-03,  1.68e-03]}& \multicolumn{2}{c|}{-}		\\
    \hline
\end{tabular}

    \caption{\textbf{A nonconvex high dimensional set:} ana/mc refers to the analytic value and the 99\%-confidence interval, respectively.}
    \label{tab:highdimnonconvex}
\end{center}
    \end{table}
\color{black}

\section{General sparse volume computation}

\label{DCT}

\subsection{General {correlative} sparsity pattern}

Let us describe a general method to compute the volume of $$\mbK := \bigcap\limits_{i=1}^m\mbK_i$$
{where $\mbK_i = \{\mbx \in \mbX : \mbg_i(\mbx) \geq 0\}$ and $(\mbg_1,\dots,\mbg_m)$ is a correlatively sparse family of polynomial vectors with associated coordinate subspace decomposition $\mbX = \sum_{i=1}^m \mbX_i$.}
For this we construct the correlation graph $G = (V,E)$ as follows:
\begin{itemize}
\item $V = \{1\dots,n\}$ represents the canonical basis $\{\mbe_1,\ldots,\mbe_n\}$ of $\mbX$;
\item $E = \{(i,j) \in \{1,\ldots,n\}^2\ \; : \; i \neq j \ \& \ \mbe_i,\mbe_j \in \mbX_k \;\text{for some}\; k \in \{1,\ldots,m\}\}$.
\end{itemize}
{As stated in Section \ref{cylinder}, we suppose that the correlation graph of $(\mbg_i)_{1\leq i\leq m}$ has exactly $m$ maximal cliques (see Section \ref{secdip} for discussions when it is not the case) that are in correspondence with the $\mbX_i$.}

Let $\mcK$ be the set of {maximal} cliques of $G$. We will use the following property of graphs:
\begin{defn}
\textbf{(CIP)}
The graph $G=(V,E)$ is said to satisfy the \emph{clique intersection property (CIP) iff} there is a clique tree $T=(\mcK,\mcE)$, such that for all $C,C' \in \mcK$, $C\cap C' \subset C''$ for any $C''$ on the path connecting $C$ and $C'$ in the tree $T$.
\end{defn}
Such a property ensures that $G$ is chordal\footnote{The CIP always holds, up to a chordal extension. In particular, cyclic graphs can be handled with empty interactions between well-chosen variables.}, see \cite{graphs}.
We then replace Assumption \ref{ass-1b} with the following strong {correlative} sparsity {assumption:

\begin{asm} \label{asm}
\emph{\textbf{(DIP)}} { We suppose that} there is a clique tree $T=(\mcK,\mcE)$, rooted in some $C_1$, {that simultaneously satisfies the CIP and the following \emph{disjoint intersection property (DIP):}} $\forall C,C',C'' \in \mcK$, 
if $(C,C') \in \mcE_T$ and $(C,C'') \in \mcE$ then $C' = C''$ or $C' \cap C'' = \varnothing$.
\end{asm}
In words, each clique has 
an empty intersection with all its siblings. See Appendix \ref{DIP} for details { on how to check this assumption and construct such a tree when it exists. See Section \ref{secdip} for possible solutions when Assumption \ref{asm} does not hold}.
Figure \ref{global} illustrates the meaning of this { assumption} for $n = 12$ and $m = 8$. One can check that Assumptions \ref{ass-1} and \ref{asm} hold.

\begin{rqe}
{With these assumptions}, the only possible clique trees for applying our method to the nonconvex example illustrated in Figure \ref{nobranch} are linear clique trees. Indeed, any branched clique tree would imply sibling cliques containing $x_1$.
\end{rqe}

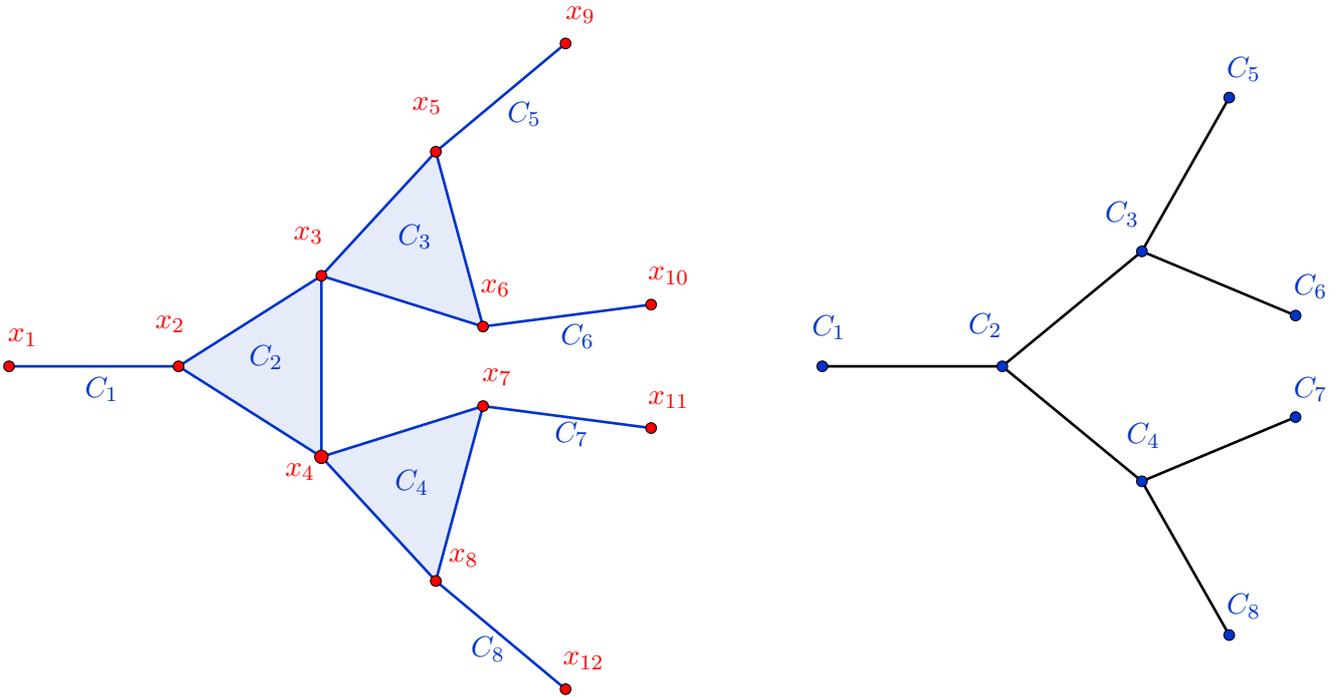
\begin{figure}[!h]
\hspace{-2cm}
\begin{tikzpicture}[line cap=round,line join=round,>=triangle 45,x=1.0cm,y=1.0cm]
\clip(-3.,-5.) rectangle (8.,5.);
\fill[line width=1.pt,color=qqttcc,fill=qqttcc,fill opacity=0.10000000149011612] (0.,0.) -- (1.88,1.2) -- (1.88,-1.2) -- cycle;
\fill[line width=1.pt,color=qqttcc,fill=qqttcc,fill opacity=0.10000000149011612] (1.88,1.2) -- (3.3853576993004233,2.845690796339621) -- (4.006523407546473,0.5274688132458569) -- cycle;
\fill[line width=1.pt,color=qqttcc,fill=qqttcc,fill opacity=0.10000000149011612] (4.006523407546473,-0.5274688132458569) -- (1.88,-1.2) -- (3.3853576993004233,-2.845690796339621) -- cycle;
\draw [line width=1.pt,color=qqttcc] (0.,0.)-- (1.88,1.2);
\draw [line width=1.pt,color=qqttcc] (1.88,1.2)-- (1.88,-1.2);
\draw [line width=1.pt,color=qqttcc] (1.88,-1.2)-- (0.,0.);
\draw [line width=1.pt,color=qqttcc] (1.88,1.2)-- (3.3853576993004233,2.845690796339621);
\draw [line width=1.pt,color=qqttcc] (3.3853576993004233,2.845690796339621)-- (4.006523407546473,0.5274688132458569);
\draw [line width=1.pt,color=qqttcc] (4.006523407546473,0.5274688132458569)-- (1.88,1.2);
\draw [line width=1.pt,color=qqttcc] (4.006523407546473,-0.5274688132458569)-- (1.88,-1.2);
\draw [line width=1.pt,color=qqttcc] (1.88,-1.2)-- (3.3853576993004233,-2.845690796339621);
\draw [line width=1.pt,color=qqttcc] (3.3853576993004233,-2.845690796339621)-- (4.006523407546473,-0.5274688132458569);
\draw [line width=1.pt,color=qqttcc] (3.3853576993004233,2.845690796339621)-- (5.0926426080191245,4.28081387136189);
\draw [line width=1.pt,color=qqttcc] (4.006523407546473,0.5274688132458569)-- (6.2177788688503375,0.8185861175302799);
\draw [line width=1.pt,color=qqttcc] (4.006523407546473,-0.5274688132458569)-- (6.2177788688503375,-0.8185861175302799);
\draw [line width=1.pt,color=qqttcc] (3.3853576993004233,-2.845690796339621)-- (5.0926426080191245,-4.28081387136189);
\draw [line width=1.pt,color=qqttcc] (-2.230336297512104,0.)-- (0.,0.);
\draw [fill=ffqqqq] (0.,0.) circle (2.0pt);
\draw[color=ffqqqq] (-0.11,0.56) node {$x_2$};
\draw [fill=ffqqqq] (1.88,1.2) circle (2.0pt);
\draw[color=ffqqqq] (1.71,1.74) node {$x_3$};
\draw [fill=ffqqqq] (1.88,-1.2) circle (2.5pt);
\draw[color=ffqqqq] (1.6,-1.4) node {$x_4$};
\draw[color=qqttcc] (1.15,0.12) node {$C_2$};
\draw [fill=ffqqqq] (3.3853576993004233,2.845690796339621) circle (2.0pt);
\draw[color=ffqqqq] (3.27,3.46) node {$x_5$};
\draw [fill=ffqqqq] (4.006523407546473,0.5274688132458569) circle (2.0pt);
\draw[color=ffqqqq] (4.17,1.04) node {$x_6$};
\draw [fill=ffqqqq] (4.006523407546473,-0.5274688132458569) circle (2.0pt);
\draw[color=ffqqqq] (4.19,-0.14) node {$x_7$};
\draw [fill=ffqqqq] (3.3853576993004233,-2.845690796339621) circle (2.0pt);
\draw[color=ffqqqq] (3.75,-2.54) node {$x_8$};
\draw[color=qqttcc] (3.11,1.72) node {$C_3$};
\draw[color=qqttcc] (3.07,-1.54) node {$C_4$};
\draw [fill=ffqqqq] (5.0926426080191245,4.28081387136189) circle (2.0pt);
\draw[color=ffqqqq] (5.29,4.66) node {$x_9$};
\draw [fill=ffqqqq] (6.2177788688503375,0.8185861175302799) circle (2.0pt);
\draw[color=ffqqqq] (6.45,1.2) node {$x_{10}$};
\draw [fill=ffqqqq] (6.2177788688503375,-0.8185861175302799) circle (2.0pt);
\draw[color=ffqqqq] (6.45,-0.44) node {$x_{11}$};
\draw [fill=ffqqqq] (5.0926426080191245,-4.28081387136189) circle (2.0pt);
\draw[color=ffqqqq] (5.33,-3.9) node {$x_{12}$};
\draw[color=qqttcc] (4.55,3.35) node {$C_5$};
\draw[color=qqttcc] (5.25,0.4) node {$C_6$};
\draw[color=qqttcc] (5.17,-0.9) node {$C_7$};
\draw[color=qqttcc] (4.07,-3.75) node {$C_8$};
\draw [fill=ffqqqq] (-2.230336297512104,0.) circle (2.0pt);
\draw[color=ffqqqq] (-2.05,0.38) node {$x_1$};
\draw[color=qqttcc] (-1.01,-0.3) node {$C_1$};
\end{tikzpicture}
\begin{tikzpicture}[line cap=round,line join=round,>=triangle 45,x=1.0cm,y=1.0cm]
\clip(-1.5,-5.) rectangle (8.,5.);
\draw [line width=1.pt] (-1.115168148756052,0.)-- (1.2533333333333332,0.);
\draw [line width=1.pt] (1.2533333333333332,0.)-- (3.090627035615632,1.5243865365284928);
\draw [line width=1.pt] (3.090627035615632,1.5243865365284928)-- (4.239000153659774,3.563252333850756);
\draw [line width=1.pt] (3.090627035615632,1.5243865365284928)-- (5.112151138198405,0.6730274653880683);
\draw [line width=1.pt] (1.2533333333333332,0.)-- (3.090627035615632,-1.5243865365284925);
\draw [line width=1.pt] (3.090627035615632,-1.5243865365284925)-- (5.112151138198405,-0.6730274653880683);
\draw [line width=1.pt] (3.090627035615632,-1.5243865365284925)-- (4.239000153659774,-3.563252333850756);
\draw [fill=qqttcc] (-1.115168148756052,0.) circle (2.0pt);
\draw[color=qqttcc] (-1.03,0.52) node {$C_1$};
\draw [fill=qqttcc] (4.239000153659774,3.563252333850756) circle (2.0pt);
\draw[color=qqttcc] (4.43,3.94) node {$C_5$};
\draw [fill=qqttcc] (5.112151138198405,0.6730274653880683) circle (2.0pt);
\draw[color=qqttcc] (5.31,1.06) node {$C_6$};
\draw [fill=qqttcc] (5.112151138198405,-0.6730274653880683) circle (2.0pt);
\draw[color=qqttcc] (5.31,-0.3) node {$C_7$};
\draw [fill=qqttcc] (4.239000153659774,-3.563252333850756) circle (2.0pt);
\draw[color=qqttcc] (4.43,-3.18) node {$C_8$};
\draw [fill=qqttcc] (1.2533333333333332,0.) circle (2.0pt);
\draw[color=qqttcc] (1.03,0.54) node {$C_2$};
\draw [fill=qqttcc] (3.090627035615632,1.5243865365284928) circle (2.0pt);
\draw[color=qqttcc] (2.83,2.02) node {$C_3$};
\draw [fill=qqttcc] (3.090627035615632,-1.5243865365284925) circle (2.0pt);
\draw[color=qqttcc] (3.11,-0.92) node {$C_4$};
\end{tikzpicture}
\vspace{-1cm}
\caption{\label{global} Chordal graph (left) with its clique tree (right).}
\end{figure}

\subsection{Distributed computation theorem}

One can formulate a simple generalization of the sequential implementation of Theorem \ref{linco} to our general {correlative} sparsity pattern.
\begin{thm} \label{main}
Let Assumptions \ref{ass-1} and \ref{asm} hold.
Let $T = (\mcK , \mcE)$ be a clique tree as in {Assumption} \ref{asm}. Then
$\vol \: \mbK = \displaystyle\int_{\mbX_1} d \mu_1^\ast$ where for $i=1,\ldots,m$, $\mu_i^\ast$ is an optimal solution to
\begin{eqnarray}
& \max\limits_{\mu_i \in \mcM_+(\mbX_i)} & \int_{\mbX_i}d\mu_i \label{distr_comp} \\
&\text{s.t.}& \mu_i \leq \left( \bigotimes\limits_{(C_i,C_j)\in\mcE} \mu_j^{\ast\mbX_i\cap\mbX_j} \right) \otimes \lambda^{n_i} \label{maj} \\
&& \spt \; \mu_i \subset \mbU_i \label{spt}
\end{eqnarray}
and $n_i = \dim\:\mbX_i \cap \left( \sum\limits_{(C_i,C_j)\in \mcE} \mbX_j \right)^\perp = \left| C_i \cap \left( \bigcup\limits_{(C_i,C_j) \in \mcE} C_j \right)^c \right|$.
\end{thm}
%
%

\begin{proof}
We define, for $i = 1,\dots,m$,
$$ \mbY_i := \mbX_i \cap \left( \sum\limits_{(C_i,C_j)\in \mcE} \mbX_j \right)^\perp = \langle x_k \rangle_{k \in C_i \ ; \ (C_i,C_j) \in \mcE \Rightarrow  k \notin C_j}$$
and we observe that, according to Assumption \ref{asm}, for any $i =1,\ldots,m$
\[
\mbX_i = \left(\bigoplus\limits_{(C_i,C_j)\in\mcE} \mbX_i \cap \mbX_j\right) \oplus \mbY_i.
\]
Thus, constraint \eqref{maj} is well-posed.

For $i = 1,\ldots,m$, let $\mcD(i) := \{j \neq i \; : \; \exists \; \text{an oriented path from } C_i \text{ to } C_j \text{ in } T\}$, the set of descendants of $C_i$, as well as $\mbZ_i:= \sum\limits_{j \in \mcD(i)} \mbX_j = \langle x_k \rangle_{k \in C_j \ ; \ j \in \mcD(i)}$ and $q_i := \dim \mbZ_i = \left|\bigcup\limits_{j\in\mcD(i)} C_j \right|$.

We are going to show by induction that for $i = 1,\dots,m$,
$$ \mu_i^\ast = \ind_{\mbU_i} \left(\prod\limits_{j \in \mcD(i)} \ind_{\mbU_j}\circ\pi_{\mbX_j} \; \lambda^{q_i}\right)^{\mbX_i\cap\mbZ_i} \otimes \lambda^{n_i}. $$
Our base cases are the leaves of $T$, i.e. the $i$ such that $\mcD(i) = \varnothing$. Then, problem \eqref{distr_comp} is reduced to the classical problem of computing the volume of $\mbU_i$, whose optimal solution is exactly $\mu_i^\ast = \lambda_{\mbU_i} = \ind_{\mbU_i} \; \lambda^{n_i}$ (because $D(i) = \varnothing \Rightarrow n_i = \dim \mbX_i$), which is the expected result.

Then we can proceed to the induction: let $i$ be a node of $T$ that is not a leaf: $\mcD(i) \neq \varnothing$; and suppose that for $j \in \mcD(i)$ such that $(C_i,C_j) \in \mcE$,
$$ \mu_j^\ast = \ind_{\mbU_j} \left(\prod\limits_{k \in \mcD(j)} \ind_{\mbU_k}\circ\pi_{\mbX_k} \; \lambda^{q_j}\right)^{\mbX_j\cap\mbZ_j} \otimes \lambda^{n_j}. $$
Then, constraint \eqref{maj} can be rewritten as
$$ \mu_i \leq \left( \bigotimes\limits_{(C_i,C_j)\in\mcE} \left( \ind_{\mbU_j} \left(\prod\limits_{k \in \mcD(j)} \ind_{\mbU_k}\circ\pi_{\mbX_k} \; \lambda^{q_j}\right)^{\mbX_j\cap\mbZ_j} \otimes \lambda^{n_j} \right)^{\mbX_i\cap\mbX_j} \right) \otimes \lambda^{n_i} $$
which in turn is simplified into
$$ \mu_i \leq \left( \bigotimes\limits_{(C_i,C_j)\in\mcE} \left( \ind_{\mbU_j} \left(\prod\limits_{k \in \mcD(j)} \ind_{\mbU_k}\circ\pi_{\mbX_k} \right) \; \lambda^{n_j+q_j} \right)^{\mbX_i\cap\mbZ_j} \right) \otimes \lambda^{n_i} $$
since the CIP ensures that $\mbX_i \cap \mbX_j \cap \mbZ_j = \mbX_i \cap \mbZ_j$: indeed $C_j$ is on the path between $C_i$ and any $C_k$ with $k \in \mcD(j)$, so that $C_i \cap C_k \subset C_j$ and thus $\mbX_i \cap \mbX_k \subset \mbX_j$, yielding $\mbX_i \cap \mbZ_j \subset \mbX_j$. At this point one can notice that $n_j + q_j = \dim (\mbX_j + \mbZ_j)$.

Then, using the DIP, we know that if $(C_i,C_j),(C_i,C_k) \in \mcE$ with $j \neq k$ then $C_j \cap C_k = \varnothing$ and with the CIP $C_j \cap C_l = \varnothing$ for any $l \in \mcD(k)$. This yields that $(\mbX_j + \mbZ_j) \cap (\mbX_k + \mbZ_k) = \{0\}$ and thus $\mbZ_i = \bigoplus\limits_{(C_i,C_j) \in \mcE} (\mbX_j + \mbZ_j)$, allowing to rewrite constraint \eqref{maj} as
$$ \mu_i \leq \left( \bigotimes\limits_{(C_i,C_j)\in\mcE} \ind_{\mbU_j} \left(\prod\limits_{k \in \mcD(j)} \ind_{\mbU_k}\circ\pi_{\mbX_k} \right) \; \lambda^{n_j+q_j} \right)^{\mbX_i\cap\mbZ_i} \otimes \lambda^{n_i}, $$
which simplifies into
$$ \mu_i \leq \left( \prod\limits_{j \in \mcD(i)} \ind_{\mbU_j}\circ\pi_{\mbX_j} \; \lambda^{q_i} \right)^{\mbX_i\cap\mbZ_i} \otimes \lambda^{n_i}. $$
Eventually, we are again faced to a classical instance of the dense volume problem for $\mbU_i$, with $\left( \prod\limits_{j \in \mcD(i)} \ind_{\mbU_j}\circ\pi_{\mbX_j} \; \lambda^{q_i} \right)^{\mbX_i\cap\mbZ_i} \otimes \lambda^{n_i}$ instead of only the uniform Lebesgue measure, and we know that the optimal solution is obtained by multiplying this non-negative dominating measure with the indicator of $\mbU_i$, yielding
$$ \mu_i^\ast = \ind_{\mbU_i} \left( \prod\limits_{j \in \mcD(i)} \ind_{\mbU_j}\circ\pi_{\mbX_j} \; \lambda^{q_i} \right)^{\mbX_i\cap\mbZ_i} \otimes \lambda^{n_i} $$
which is the announced result.

We conclude by using the fact that $\mcD(1) = \{2,\dots,m\}$ and $\R^n = \mbX_1 + \mbZ_1 = \mbX_1 \oplus (\mbX_1^\perp \cap \mbZ_1)$ to compute the value:
\begin{eqnarray*}
\int_{\mbX_1} d\mu_1^\ast &=& \int_{\mbX_1} \ind_{\mbU_1}  \left( \prod\limits_{j = 2}^m \ind_{\mbU_j}\circ\pi_{\mbX_j} \; \lambda^{q_1} \right)^{\mbX_1\cap\mbZ_1} d\lambda^{n_1} \\
&=& \int_{\mbX_1} \ind_{\mbU_1}(\mbx_1) \left( \int_{\mbX_1^\perp \cap \mbZ_1} \prod\limits_{j = 2}^m \ind_{\mbU_j}\circ\pi_{\mbX_j}(\mbx_1+\mbz_1) \; d\mbz_1 \right) d\mbx_1 \\
&=& \int_{\R^n} \left(\prod\limits_{i=1}^m \ind_{\mbU_i}\circ\pi_{\mbX_i}(\mbx) \right) d\mbx \\
&=& \int_{\R^n} \left(\prod\limits_{i=1}^m \ind_{\mbK_i}(\mbx) \right) d\mbx \\
&=& \int_{\R^n} \ind_\mbK(\mbx) \; d\mbx \\
&=& \vol \ \mbK.
\end{eqnarray*}
\end{proof}
\color{black}

Therefore one obtains a sequence of infinite dimensional LPs on measures that can be algorithmically addressed using the usual SDP relaxations. The computations start from the leaves of the clique tree and proceed down to the root. It is worth noting that all the {maximal} cliques of the same generation in the tree are totally independent, which allows to treat them simultaneously, \textit{i.e.} to partially parallelize the computations. {Let $d \in \N$. We consider the solutions $\mbm_i^{(d)}$ to the moment relaxations, for $i = 1,\dots,m$:

\begin{eqnarray}
\mbP_{d,i} = & \max\limits_{\mbm_i,\hat{\mbm}_i \in \R^{s(d)}} & m_{i,0} \label{blh} \\
&\text{s.t.}& m_{i,(\alpha^{(j)})_j,\beta} + \hat{m}_{i,(\alpha^{(j)})_j,\beta} = \left( \prod\limits_{(C_i,C_j)\in\mcE} m_{j,\alpha^{(j)},0}^{(d)} \right) \ell_{i,\beta} \label{majh} \\
&& \mbM_d(\mbm_i) \succeq 0, \mbM_d(\hat{\mbm}_i) \succeq 0 \nonumber \\
&& \mbM_{d-d_i}(\mbg_i \mbm_i) \succeq 0 \nonumber
\end{eqnarray}
where $(\alpha^{(j)})_j,\beta$ are appropriate multi-indices and $\ell_{i,\beta}$ is the $\beta$ moment of $\lambda^{n_i}$ on the appropriate projection of $\mbB$. We are going to study the convergence of the sequence $\mbP_{d,1}$ to $\vol \ \mbK$.


\begin{thm}[Convergence of the branched Moment-SOS hierarchy]
Let $C_i \in \mcK$ such that for $C_j \in \mcK$ satisfying $(C_i,C_j) \in \mcE$, we have a converging sequence of moment vectors $(\mbm_j^{(d)})_d$: $\mbm_j^{(d)} \in \R^{s(d)}$ and for any appropriate multi-index $\alpha$, $m_{j,\alpha}^{(d)}{\underset{d \to \infty}{\longrightarrow}} \int \mbx^\alpha \; d\mu_j^{\ast}$.

In this setting, if each relaxation of the LP problem \eqref{blh} associated to the clique $C_i$ has at least one feasible solution, then the Moment-SOS hierarchy associated to clique $C_i$ converges, in the sense that for any appropriate multi-index $\alpha$, $m_{i,\alpha}^{(d)} {\underset{d \to \infty}{\longrightarrow}} \int \mbx^\alpha \; d\mu_i^\ast$.

Thus, by induction, if at all nodes of $T$ the moment relaxations remain feasible at all degrees of relaxation, then the branched Moment-SOS hierarchy converges, namely $\mbP_{d,1} \underset{d \to \infty}{\longrightarrow} \vol \ \mbK$.
\end{thm}
\begin{proof}
The feasibility assumption ensures that the $\mbm_i^{(d)}$ are properly defined at all degrees $d$. Then, pointwise convergence of the $(\mbm_j^{(d)})_d$ yields that $m^{(d)}_{j,0}$ is bounded for the weak-$\ast$ topology on $\R[\mbx]'$. Constraint \eqref{majh} yields that $(\mbm_i^{(d)})_d$ is bounded for the weak-$\ast$ topology on $\R[\mbx]'$, which means, according to Banach-Alaoglu's theorem, that it has an accumulation point. Finally, by uniqueness of the solution to the infinite dimensional LP problem \eqref{distr_comp}, the convergence of $(\mbm_j^{(d)})_d$ to the moment sequence of $\mu_j^\ast$ ensures that this accumulation point is none other than the moment sequence of $\mu_i^\ast$. This proves existence and uniqueness of the accumulation point of $(\mbm_i^{(d)})_d$. Then, we get for any appropriate multi-index $\alpha$
$$m^{(d)}_{i,\alpha} \underset{d \to \infty}{\longrightarrow} \int \mbx^\alpha \; d\mu_i^\ast. $$
To conclude for the global convergence of the sparse scheme, we just need to check the base case of this induction. Here again the base case is the leaves of the tree at which we are face to standard instances of the volume problem, whose associated Moment-SOS hierarchy is already proved to converge. Thus, our convergence assumption is satisfied, which means that as long as all the relaxations are feasible, their solutions converge weakly-$\ast$ to the infinite dimensional optimal measures, and in particular
$$ \mbP_{d,1} \underset{d \to \infty}{\longrightarrow} \vol \ \mbK. $$
\end{proof}
}


\begin{rqe}
Stokes constraints can be implemented similarly to the linear case.
\end{rqe}

\subsection{Example: 6D polytope}\label{ex1}

Let $\mbX := \R^6$ and $\mbX_1 = \langle x_1,x_2 \rangle$, 
$\mbX_2 = \langle x_2,x_3,x_4 \rangle$, $\mbX_3 = \langle x_3,x_5 \rangle$, $\mbX_4 = \langle x_4,x_6 \rangle$. For $i=1,3,4$ let
$\mbg_i(x,y) := (x,y,1-x-y)$ and $\mbU_i := \mbg_i^{-1}\left( \R_+^2 \right) = \{(u,v) \in [0,1]^2 \; : \; u+v \leq 1 \}$. Let $\mbg_2(x,y,z) := (x,y,z,1-x-y-z)$ and $\mbU_2 := \mbg_2^{-1}\left(\R_+^3\right) = \{(x,y,z) \in [0,1]^3 \; \vert \; x+y+z \leq 1 \}$. Let us approximate the volume of the 6D polytope
$$ \mbK := \{\mbx \in \R_+^6 \; :\; x_1+x_2 \leq 1,\: x_2+x_3+x_4 \leq 1,\: x_3+x_5 \leq 1,\: x_4+x_6 \leq 1\} = \bigcap\limits_{i=1}^4 \pi_{\mbX_i}^{-1}\left(\mbU_i\right). $$
No linear clique tree is associated to this problem through Proposition \ref{asm}. The only possible clique trees for applying our method are the two branched clique trees of Figure \ref{nonlinear}.
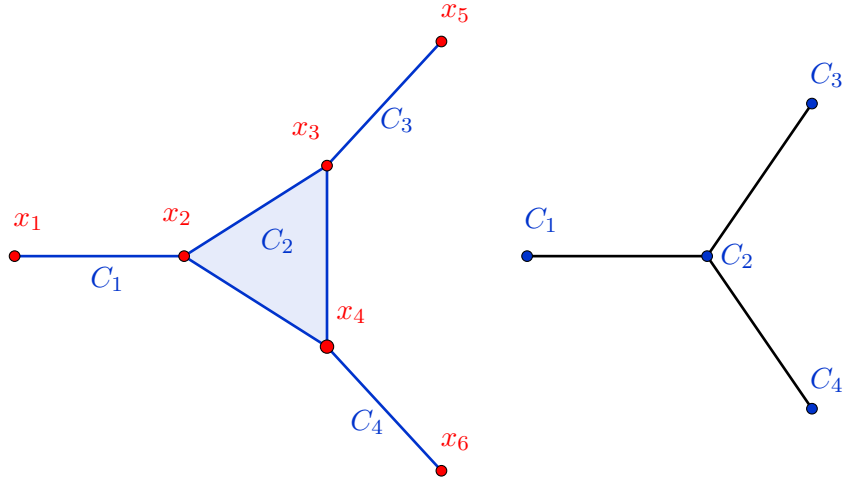
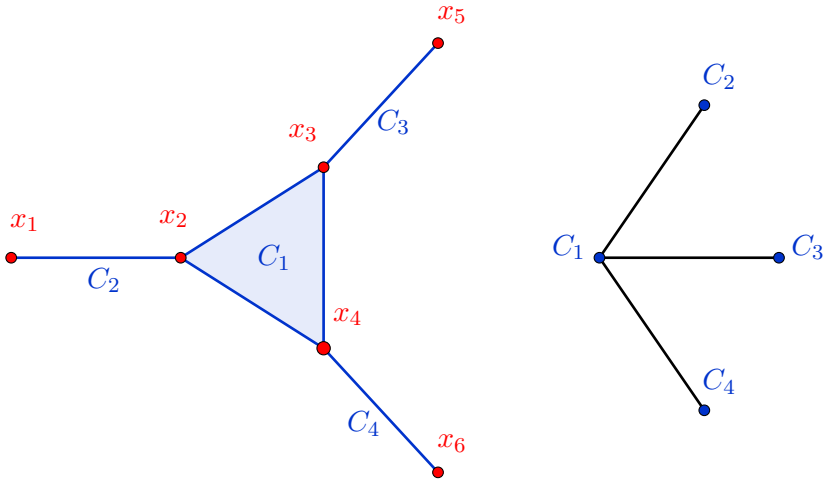
\begin{figure}[!h]
\begin{center}
\begin{subfigure}{\textwidth}
\begin{center}
\begin{tikzpicture}[line cap=round,line join=round,>=triangle 45,x=1.0cm,y=1.0cm]
\clip(-2.5,-3.) rectangle (4.,3.5);
\fill[line width=1.pt,color=qqttcc,fill=qqttcc,fill opacity=0.10000000149011612] (0.,0.) -- (1.88,1.2) -- (1.88,-1.2) -- cycle;
\draw [line width=1.pt,color=qqttcc] (0.,0.)-- (1.88,1.2);
\draw [line width=1.pt,color=qqttcc] (1.88,1.2)-- (1.88,-1.2);
\draw [line width=1.pt,color=qqttcc] (1.88,-1.2)-- (0.,0.);
\draw [line width=1.pt,color=qqttcc] (1.88,1.2)-- (3.3853576993004233,2.845690796339621);
\draw [line width=1.pt,color=qqttcc] (1.88,-1.2)-- (3.3853576993004233,-2.845690796339621);
\draw [line width=1.pt,color=qqttcc] (-2.230336297512104,0.)-- (0.,0.);
\draw [fill=ffqqqq] (0.,0.) circle (2.0pt);
\draw[color=ffqqqq] (-0.09,0.52) node {$x_2$};
\draw [fill=ffqqqq] (1.88,1.2) circle (2.0pt);
\draw[color=ffqqqq] (1.61,1.66) node {$x_3$};
\draw [fill=ffqqqq] (1.88,-1.2) circle (2.5pt);
\draw[color=ffqqqq] (2.2,-0.78) node {$x_4$};
\draw[color=qqttcc] (1.23,0.2) node {$C_2$};
\draw [fill=ffqqqq] (3.3853576993004233,2.845690796339621) circle (2.0pt);
\draw[color=ffqqqq] (3.57,3.22) node {$x_5$};
\draw [fill=ffqqqq] (3.3853576993004233,-2.845690796339621) circle (2.0pt);
\draw[color=ffqqqq] (3.57,-2.46) node {$x_6$};
\draw[color=qqttcc] (2.8,1.8) node {$C_3$};
\draw[color=qqttcc] (2.41,-2.2) node {$C_4$};
\draw [fill=ffqqqq] (-2.230336297512104,0.) circle (2.0pt);
\draw[color=ffqqqq] (-2.05,0.46) node {$x_1$};
\draw[color=qqttcc] (-1.01,-0.3) node {$C_1$};
\end{tikzpicture}
\begin{tikzpicture}[line cap=round,line join=round,>=triangle 45,x=1.0cm,y=1.0cm]
\clip(-1.5,-3) rectangle (3.5,2.7);
\draw [line width=1.pt] (-1.115168148756052,0.)-- (1.2533333333333332,0.);
\draw [line width=1.pt] (1.2533333333333332,0.)-- (2.632678849650212,2.0228453981698107);
\draw [line width=1.pt] (1.2533333333333332,0.)-- (2.632678849650212,-2.0228453981698107);
\draw [fill=qqttcc] (-1.115168148756052,0.) circle (2.0pt);
\draw[color=qqttcc] (-0.93,0.48) node {$C_1$};
\draw [fill=qqttcc] (2.632678849650212,2.0228453981698107) circle (2.0pt);
\draw[color=qqttcc] (2.83,2.4) node {$C_3$};
\draw [fill=qqttcc] (2.632678849650212,-2.0228453981698107) circle (2.0pt);
\draw[color=qqttcc] (2.83,-1.64) node {$C_4$};
\draw [fill=qqttcc] (1.2533333333333332,0.) circle (2.0pt);
\draw[color=qqttcc] (1.65,0.) node {$C_2$};
\end{tikzpicture}
\caption{3 step clique tree}
\end{center}
\end{subfigure}

\begin{subfigure}{\textwidth}
\begin{center}
\hspace{-1cm}
\begin{tikzpicture}[line cap=round,line join=round,>=triangle 45,x=1.0cm,y=1.0cm]
\clip(-2.5,-3.) rectangle (4.,3.5);
\fill[line width=1.pt,color=qqttcc,fill=qqttcc,fill opacity=0.10000000149011612] (0.,0.) -- (1.88,1.2) -- (1.88,-1.2) -- cycle;
\draw [line width=1.pt,color=qqttcc] (0.,0.)-- (1.88,1.2);
\draw [line width=1.pt,color=qqttcc] (1.88,1.2)-- (1.88,-1.2);
\draw [line width=1.pt,color=qqttcc] (1.88,-1.2)-- (0.,0.);
\draw [line width=1.pt,color=qqttcc] (1.88,1.2)-- (3.3853576993004233,2.845690796339621);
\draw [line width=1.pt,color=qqttcc] (1.88,-1.2)-- (3.3853576993004233,-2.845690796339621);
\draw [line width=1.pt,color=qqttcc] (-2.230336297512104,0.)-- (0.,0.);
\draw [fill=ffqqqq] (0.,0.) circle (2.0pt);
\draw[color=ffqqqq] (-0.09,0.52) node {$x_2$};
\draw [fill=ffqqqq] (1.88,1.2) circle (2.0pt);
\draw[color=ffqqqq] (1.61,1.66) node {$x_3$};
\draw [fill=ffqqqq] (1.88,-1.2) circle (2.5pt);
\draw[color=ffqqqq] (2.2,-0.78) node {$x_4$};
\draw[color=qqttcc] (1.23,0.) node {$C_1$};
\draw [fill=ffqqqq] (3.3853576993004233,2.845690796339621) circle (2.0pt);
\draw[color=ffqqqq] (3.57,3.22) node {$x_5$};
\draw [fill=ffqqqq] (3.3853576993004233,-2.845690796339621) circle (2.0pt);
\draw[color=ffqqqq] (3.57,-2.46) node {$x_6$};
\draw[color=qqttcc] (2.8,1.8) node {$C_3$};
\draw[color=qqttcc] (2.41,-2.2) node {$C_4$};
\draw [fill=ffqqqq] (-2.230336297512104,0.) circle (2.0pt);
\draw[color=ffqqqq] (-2.05,0.46) node {$x_1$};
\draw[color=qqttcc] (-1.01,-0.3) node {$C_2$};
\end{tikzpicture}
\hspace{0.5cm}
\begin{tikzpicture}[line cap=round,line join=round,>=triangle 45,x=1.0cm,y=1.0cm]
\clip(.5,-3) rectangle (4.3,3);
\draw [line width=1.pt] (1.2533333333333332,0.)-- (3.615168148756052,0.);
\draw [line width=1.pt] (1.2533333333333332,0.)-- (2.632678849650212,2.0228453981698107);
\draw [line width=1.pt] (1.2533333333333332,0.)-- (2.632678849650212,-2.0228453981698107);
\draw [fill=qqttcc] (3.615168148756052,0.) circle (2.0pt);
\draw[color=qqttcc] (4,0.14) node {$C_3$};
\draw [fill=qqttcc] (2.632678849650212,2.0228453981698107) circle (2.0pt);
\draw[color=qqttcc] (2.83,2.4) node {$C_2$};
\draw [fill=qqttcc] (2.632678849650212,-2.0228453981698107) circle (2.0pt);
\draw[color=qqttcc] (2.83,-1.64) node {$C_4$};
\draw [fill=qqttcc] (1.2533333333333332,0.) circle (2.0pt);
\draw[color=qqttcc] (0.85,0.14) node {$C_1$};
\end{tikzpicture}
\caption{2 step clique tree}
\end{center}
\end{subfigure}
\caption{Two possible branched clique trees for the 6D polytope.}\label{nonlinear}
\end{center}
\end{figure}

Let us compare the performance of the algorithms derived from the two possible clique tree configurations and with the dense problem. For that, we first write the problem associated with the 3 step clique tree configuration of the top of Figure \ref{nonlinear}: 
\begin{equation} \label{linear}
\vol \: \mbK = \int_{\mbX_1} d \mu^{\ast}_1
\end{equation} where \begin{eqnarray*}
\mu^{\ast}_1 &=& \underset{\mu_1 \in \mcM_+(\mbX_1)}{\argmax} \int_{\mbX_1} d \mu_1\\
&\text{s.t.}& d\mu_1(x_1,x_2) \leq d x_1 \; d\mu_2^{\ast \langle x_2\rangle}(x_2) \\
&& \spt \: \mu_1 \subset \mbU_1 \\
\mu^{\ast}_2 &=& \underset{\mu_2 \in \mcM_+(\mbX_2)}{\argmax} \int_{\mbX_2} d \mu_2\\
&\text{s.t.}& d\mu_2(x_2,x_3,x_4) \leq d x_2 \; d\mu^{\ast\langle x_3\rangle}_3(x_3) \; d\mu^{\ast\langle x_4\rangle}_4(x_4) \\
&& \spt \: \mu_2 \subset \mbU_2 \\
\mu_i^\ast &=& \underset{\substack{\mu_i \in \mcM_+(\mbX_i)\\i=
3,4}}{\argmax} \int_{\mbX_i} d \mu_i\\
&\text{s.t.}& \mu_i \leq \lambda^2 \\
&& \spt \; \mu_i \subset \mbU_i, \qquad\qquad i=3,4.
\end{eqnarray*}
This problem can be complemented with the following Stokes constraints:
\begin{eqnarray*}
\frac{\partial}{\partial x_1} \left[x_1\:(1-x_1-x_2) \vphantom{\sum} \right] \; d\mu_1(x_1,x_2) &=& \frac{\partial}{\partial x_1} \left[ x_1\:(1-x_1-x_2) \; d\mu_1(x_1,x_2)\right] \\
\frac{\partial}{\partial x_2} \left[x_2\:(1-x_2-x_3-x_4) \vphantom{\sum} \right] \; d\mu_2(x_2,x_3,x_4) &=& \frac{\partial}{\partial x_2} \left[ x_2\:(1-x_2-x_3-x_4) \;d\mu_2(x_2,x_3,x_4) \right]  \\
\frac{\partial}{\partial x_i} \left[x_i \: (1-x_i-x_{i+2}) \vphantom{\sum} \right] \; d\mu_i(x_i,x_{i+2}) &=& \frac{\partial}{\partial x_i} \left[ x_i \: (1-x_i-x_{i+2}) \; d\mu_i(x_i,x_{i+2}) \right]\\
\frac{\partial}{\partial x_{i+2}} \left[x_{i+2} \: (1-x_i-x_{i+2}) \vphantom{\sum} \right] \; d\mu_i(x_i,x_{i+2}) &=& \frac{\partial}{\partial x_{i+2}} \left[ x_{i+2} \: (1-x_i-x_{i+2}) \;d\mu_i(x_i,x_{i+2}) \right] \quad  \quad i=3,4.
\end{eqnarray*}
The 2 step clique tree of the bottom of Figure \ref{nonlinear} yields the following formulation
\begin{equation} \label{branched}
\vol \: \mbK = \int_{\mbX_2} d \mu^{\ast}_2
\end{equation}
where
\begin{eqnarray*}
\mu^{\ast}_2 &=& \underset{\mu_2 \in \mcM_+(\mbX_2)}{\argmax} \int_{\mbX_2} d \mu_2\\
&\text{s.t.}& d\mu_2(x_2,x_3,x_4) \leq d\mu^{\ast\langle x_2\rangle}_1(x_2) \; d\mu^{\ast\langle x_3\rangle}_3(x_3) \; d\mu^{\ast\langle x_4\rangle}_4(x_4) \\
&& \spt \: \mu_2 \subset \mbU_2 \\
\mu_i^\ast &=& \underset{\substack{\mu_i \in \mcM_+(\mbX_i)\\i=1,3,4}}{\argmax} \int_{\mbX_i} d \mu_i\\
&\text{s.t.}& \mu_i \leq \lambda_{\mbX_i} \\
&& \spt \; \mu_i \subset \mbU_i,  \qquad \qquad \qquad i = 1,3,4
\end{eqnarray*}
with Stokes constraints
\begin{eqnarray*}
\frac{\partial}{\partial x_i} \left[x_i \: (1-x_i-x_j) \vphantom{\sum} \right] \; d\mu_i(x_i,x_j) &=& \frac{\partial}{\partial x_i} \left[ x_i \: (1-x_i-x_j) \;d\mu_i(x_i,x_j)\right]\\
\frac{\partial}{\partial x_j} \left[x_j \: (1-x_i-x_j) \vphantom{\sum} \right] \; d\mu_i(x_i,x_j) &=& \frac{\partial}{\partial x_j} \left[ x_j \: (1-x_i-x_j) \; d\mu_i(x_i,x_j)\right]
\end{eqnarray*}
for $(i,j) = (1,2),(3,5),(4,6)$.
However, no Stokes constraints can be applied for the computation of $\mu_2$ (there is no Lebesgue measure in the domination constraint, so the optimal measure is not uniform). For this reason one can expect a slower convergence than in the linear configuration.

We implement the hierarchies associated to the 2 and 3 step sparse formulations, as well as the dense problem hierarchy, and compare their performance in Figure \ref{fig:nonlinear}. We can compute analytically
\[
\vol \: \mbK = \frac{1}{18} \simeq 0.0556.
\]
\begin{figure}[!h]
\begin{subfigure}{0.5\textwidth}
\includegraphics[width=\textwidth]{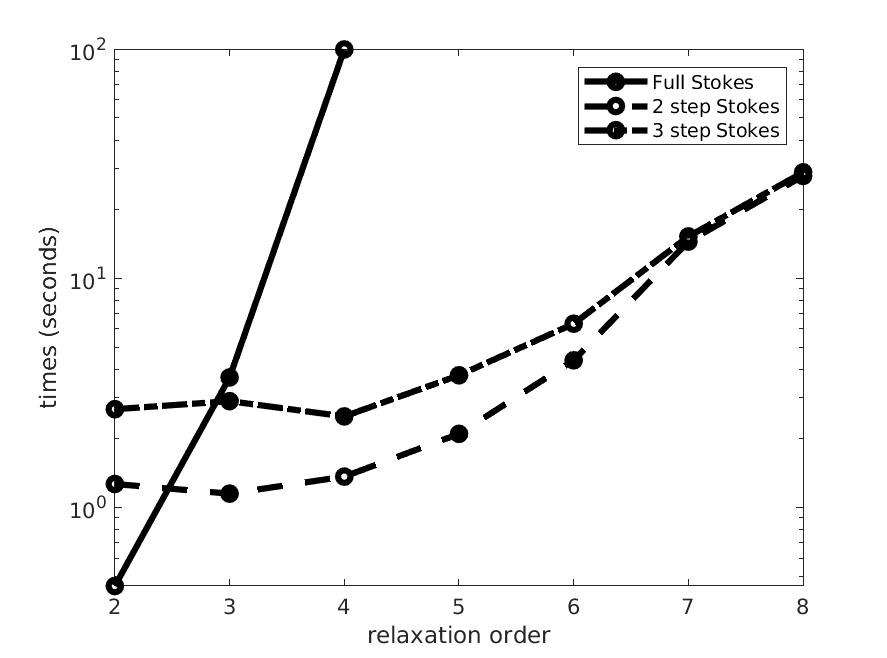}
\caption{Computation time vs relaxation order.}
\label{fig:nonlinear_tim}
\end{subfigure}
\hfill
\begin{subfigure}{0.5\textwidth}
\includegraphics[width=\textwidth]{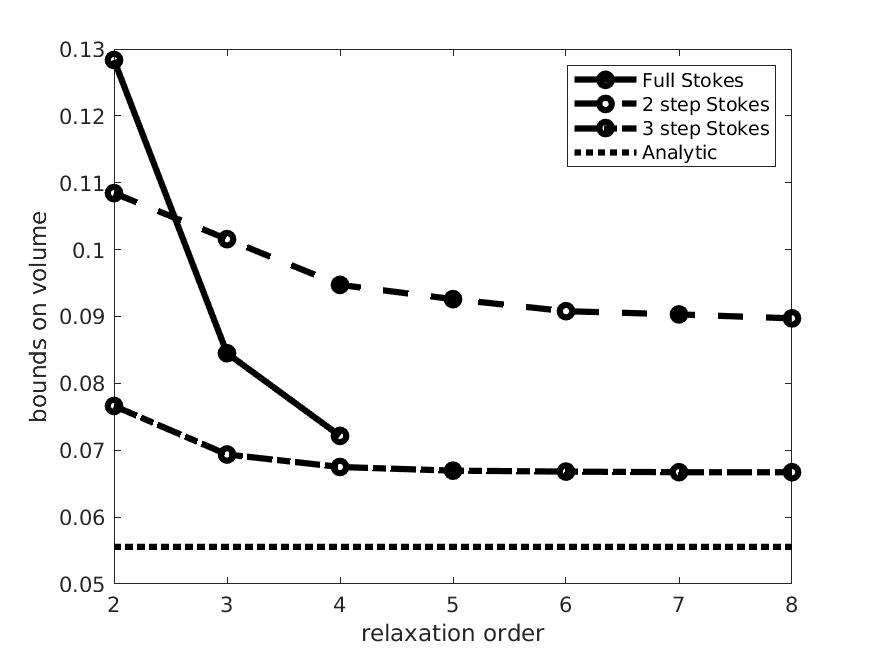}
\caption{Bounds on the volume vs relaxation order.}
\label{fig:nonlinear_val}
\end{subfigure}
\caption{Performance for the 6D polytope.}
\label{fig:nonlinear}
\end{figure}

Both sparse formulations outperform the dense one in terms of computational time needed to solve the corresponding SDPs (Figure \ref{fig:nonlinear_tim}). On the accuracy side however (Figure \ref{fig:nonlinear_val}), we observe that the 2 step formulation seems to be less efficient than the 3 step formulation. In particular when considering the accuracy/time effort relation at order $3$ the dense formulation provides a better value in almost the same time.

We believe that this can be explained by the way Stokes constraints are added to the program. {Indeed, at a given clique, Stokes constraints can only be implemented in the variables that are not shared with the input measure.} In the {fully (2 step)} branched configuration, the last step of the optimization program cannot be accelerated by Stokes constraints at all{, while in the 3 step configuration, step 1 includes Stokes constraints in $x_3,x_4,x_5,x_6$, step 2 includes Stokes constraints in $x_2$ and step 3 includes Stokes constraints on $x_1$, which explains the gap between the optimal values of these two configurations}.

{Moreover, it seems that even the least branched (3 step) configuration still presents a gap between its optimal value and the analytic solution. This might also happen with a non-sparse instance of the Moment-SOS hierarchy (which converges theoretically) and it is likely due to the choice of the monomial basis to represent polynomials. Indeed, most of the Moment-SOS parsers generate SDP problems with the basis of monomials, while sometimes other bases (e.g. Chebyshev or Legendre polynomials) are more appropriate. However, in this precise case, it might also be linked again with the sparse Stokes constraints implementation. Indeed, in step 2 of the scheme, the unknown measure measures $x_2,x_3$ and $x_4$ but Stokes constraints are implemented only in $x_2$, leaving a possible Gibbs effect in $x_3,x_4$. Unlike most of our numerical examples, this one still includes an optimization step in which most of the variables are not controlled through Stokes constraints.} The gap between the optimal value and the analytic value for the {3 step} branched formulation in Figure \ref{fig:nonlinear_val} could be explained by a Gibbs effect in the {second} optimization step.

As a consequence, in the following, one should avoid the branched hierarchies that cannot be accelerated at each step { at least partially} by Stokes constraints. Such a hierarchy appears when the root of the chosen clique tree shares all its vertices with its children cliques. It can be proved that such a configuration can always be avoided while implementing sparse volume computation, by choosing a leaf as the new root of the tree.

\subsection{Example: 4D polytope}\label{ex2}

Let $\mbX := \R^4$, $\mbX_1:=\langle x_1,x_2\rangle$, $\mbX_2:=\langle x_2,x_3\rangle$, $\mbX_3:=\langle x_3,x_4\rangle$, $\mbg_i(u,v) := (u,v,1-u-v)$, $i=1,2,3$ and $\mbU_i := \mbg_i^{-1}\left(\R_+^3 \right) = \{(u,v) \in [0,1]^2 \; : \; u+v \leq 1\}$. Let us approximate the 
volume of the 4D polytope
$$ \mbK := \left\{(x_1,x_2,x_3,x_4) \in \R_+^4 \: : \:
x_1+x_2 \leq 1,\: x_2+x_3 \leq 1,\: x_3+x_4 \leq 1 \right\} = \bigcap\limits_{i=1}^3 \pi_{\mbX_i}^{-1}\left(\mbU_i\right). $$
In such a case, there are two possible configurations for the associated clique tree of Proposition \ref{asm}, see Figure \ref{threecyl}. Accordingly, we can compute $\vol \: \mbK$ in two different ways. The first way
\begin{figure}[!h]
\begin{center}
\begin{subfigure}{\textwidth}
\begin{center}
\hspace{-0.8cm}
\begin{tikzpicture}[line cap=round,line join=round,>=triangle 45,x=1.0cm,y=1.0cm]
\clip(-2.5,-0.5) rectangle (5.,1);
\draw [line width=1.pt,color=qqttcc] (-2.230336297512104,0.)-- (0.,0.);
\draw [line width=1.pt,color=qqttcc] (0.,0.)-- (2.230336297512104,0.);
\draw [line width=1.pt,color=qqttcc] (2.230336297512104,0.)-- (4.460672595024208,0.);
\draw [fill=ffqqqq] (0.,0.) circle (2.0pt);
\draw[color=ffqqqq] (0.17,0.44) node {$x_2$};
\draw [fill=ffqqqq] (-2.230336297512104,0.) circle (2.0pt);
\draw[color=ffqqqq] (-2.05,0.44) node {$x_1$};
\draw [fill=ffqqqq] (2.230336297512104,0.) circle (2.0pt);
\draw[color=ffqqqq] (2.43,0.44) node {$x_3$};
\draw[color=qqttcc] (-1.01,-0.25) node {$C_1$};
\draw [fill=ffqqqq] (4.460672595024208,0.) circle (2.0pt);
\draw[color=ffqqqq] (4.65,0.38) node {$x_4$};
\draw[color=qqttcc] (1.21,-0.25) node {$C_2$};
\draw[color=qqttcc] (3.45,-0.25) node {$C_3$};
\end{tikzpicture}
\begin{tikzpicture}[line cap=round,line join=round,>=triangle 45,x=1.0cm,y=1.0cm]
\clip(-1.5,-0.5) rectangle (3.7,1.);
\draw [line width=1.pt] (-1.115168148756052,0.)-- (1.115168148756052,0.);
\draw [line width=1.pt] (1.115168148756052,0.)-- (3.3455044462681562,0.);
\draw [fill=qqttcc] (-1.115168148756052,0.) circle (2.0pt);
\draw[color=qqttcc] (-1.09,0.58) node {$C_1$};
\draw [fill=qqttcc] (1.115168148756052,0.) circle (2.0pt);
\draw[color=qqttcc] (1.13,0.62) node {$C_2$};
\draw [fill=qqttcc] (3.3455044462681562,0.) circle (2.0pt);
\draw[color=qqttcc] (3.27,0.6) node {$C_3$};
\end{tikzpicture}
\caption{linear clique tree}
\end{center}
\end{subfigure}

\begin{subfigure}{\textwidth}
\begin{center}
\hspace{-1.5cm}
\begin{tikzpicture}[line cap=round,line join=round,>=triangle 45,x=1.0cm,y=1.0cm]
\clip(-2.5,-1.) rectangle (5.,1);
\draw [line width=1.pt,color=qqttcc] (-2.230336297512104,0.)-- (0.,0.);
\draw [line width=1.pt,color=qqttcc] (0.,0.)-- (2.230336297512104,0.);
\draw [line width=1.pt,color=qqttcc] (2.230336297512104,0.)-- (4.460672595024208,0.);
\draw [fill=ffqqqq] (0.,0.) circle (2.0pt);
\draw[color=ffqqqq] (0.17,0.44) node {$x_2$};
\draw [fill=ffqqqq] (-2.230336297512104,0.) circle (2.0pt);
\draw[color=ffqqqq] (-2.05,0.44) node {$x_1$};
\draw [fill=ffqqqq] (2.230336297512104,0.) circle (2.0pt);
\draw[color=ffqqqq] (2.43,0.44) node {$x_3$};
\draw[color=qqttcc] (-1.01,-0.25) node {$C_2$};
\draw [fill=ffqqqq] (4.460672595024208,0.) circle (2.0pt);
\draw[color=ffqqqq] (4.65,0.38) node {$x_4$};
\draw[color=qqttcc] (1.21,-0.25) node {$C_1$};
\draw[color=qqttcc] (3.45,-0.25) node {$C_3$};
\end{tikzpicture}
\hspace{0.8cm}
\begin{tikzpicture}[line cap=round,line join=round,>=triangle 45,x=1.0cm,y=1.0cm]
\clip(-0.5,-2.) rectangle (3.,2.);
\draw [line width=1.pt] (0.,0.)-- (2.34,1.36);
\draw [line width=1.pt] (0.,0.)-- (2.34,-1.36);
\draw [fill=qqttcc] (0.,0.) circle (2.0pt);
\draw[color=qqttcc] (-0.09,0.5) node {$C_1$};
\draw [fill=qqttcc] (2.34,1.36) circle (2.0pt);
\draw[color=qqttcc] (2.53,1.74) node {$C_2$};
\draw [fill=qqttcc] (2.34,-1.36) circle (2.0pt);
\draw[color=qqttcc] (2.53,-0.98) node {$C_3$};
\end{tikzpicture}
\caption{branched clique tree}
\end{center}
\end{subfigure}
\caption{Two possible clique trees for the 4D polytope.} \label{threecyl}
\end{center}
\end{figure}
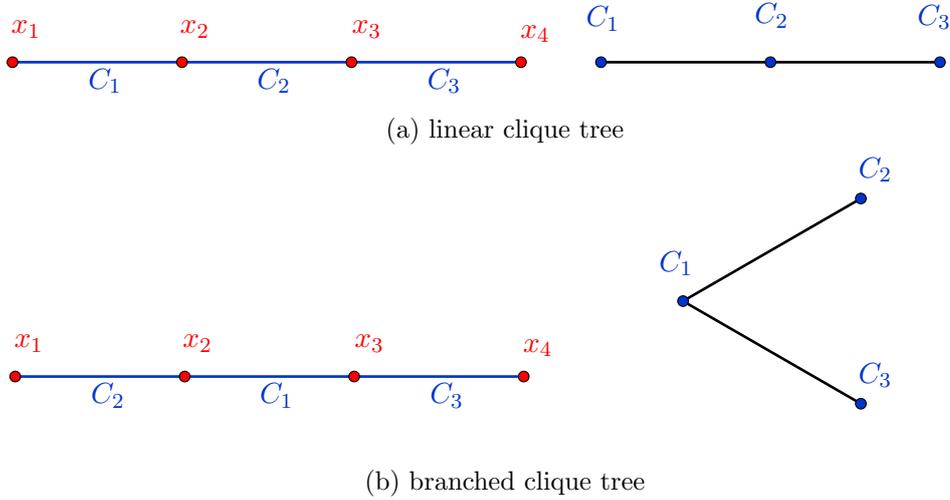
\begin{eqnarray}
\vol \: \mbK &=& \max\limits_{\substack{\mu_1 \in \mcM_+(\mbX_1) \\ \mu_2 \in \mcM_+(\mbX_2) \\ \mu_3 \in \mcM_+(\mbX_3)}} \int_{\mbX_1} d \mu_1 \\
&\text{s.t.}& d\mu_1(x_1,x_2) \leq d x_1 \; d\mu_2^{\langle x_2\rangle}(x_2) \nonumber \\
&& d\mu_2(x_2,x_3) \leq d x_2 \; d\mu_3^{\langle x_3\rangle}(x_2) \nonumber \\
&& d\mu_3(x_3,x_4) \leq d x_3 \; d x_4 \nonumber \\
&& \spt \; \mu_i \subset \mbU_i, \qquad  \qquad\qquad i=1,2,3. \nonumber 
\end{eqnarray}
is the linear formulation given by Corollary \ref{linco}, which is under the form of a non-parallelizable single linear problem.
The following additional Stokes constraints can be added:
\begin{eqnarray*}
\frac{\partial}{\partial x_1} \left[x_1 \: (1-x_1-x_2) \vphantom{\sum} \right] \; d\mu_1(x_1,x_2) &=& \frac{\partial}{\partial x_1} \left[ x_1 \: (1-x_1-x_2) \; d\mu_1(x_1,x_2) \right]\\
\frac{\partial}{\partial x_2} \left[x_2 \: (1-x_2-x_3) \vphantom{\sum} \right] \; d\mu_2(x_2,x_3) &=& \frac{\partial}{\partial x_2} \left[ x_2 \: (1-x_2-x_3) \; d\mu_2(x_2,x_3) \right]\\
\frac{\partial}{\partial x_3} \left[x_3 \: (1-x_3-x_4) \vphantom{\sum} \right] \; d\mu_3(x_3,x_4) &=& \frac{\partial}{\partial x_3} \left[ x_3 \: (1-x_3-x_4) \; d\mu_3(x_3,x_4) \right]\\
\frac{\partial}{\partial x_4} \left[x_4 \: (1-x_3-x_4) \vphantom{\sum} \right] \; d\mu_3(x_3,x_4) &=& \frac{\partial}{\partial x_4} \left[ x_4 \: (1-x_3-x_4) \; d\mu_3(x_3,x_4) \right].\\
\end{eqnarray*}
On the other hand, if one associates the {maximal} clique $C_1$ to the subspace $\mbX_2$ and the {maximal} clique $C_2$ to the subspace $\mbX_1$, one also has
\begin{equation}
\vol \: \mbK = \int_{\mbX_2} d \mu^{\ast}_2
\end{equation}
where
\begin{eqnarray*}
\mu^{\ast}_2 &=& \underset{\mu_2 \in \mcM_+(\mbX_2)}{\argmax} \int_{\mbX_2} d \mu_2\\
&\text{s.t.}& d\mu_2(x_2,x_3) \leq \mu^{\ast\langle x_2\rangle}_1(dx_2) \; d\mu^{\ast\langle x_3\rangle}_3(x_3) \\
&& \spt \: \mu_2 \subset \mbU_2 \\
\mu^{\ast}_1 &=& \underset{\mu_1 \in \mcM_+(\mbX_1)}{\argmax} \int_{\mbX_1} d \mu_1\\
&\text{s.t.}& d\mu_1(x_1,x_2) \leq d x_1 \; d x_2 \\
&& \spt \: \mu_1 \subset \mbU_1 \\
\mu^{\ast}_3 &=& \underset{\mu_3 \in \mcM_+(\mbX_3)}{\argmax} \int_{\mbX_3} d \mu_3\\
&\text{s.t.}& d\mu_3(x_3,x_4) \leq d x_3 \; d x_4 \\
&& \spt \: \mu_3 \subset \mbU_3
\end{eqnarray*}
which is the branched formulation associated to Theorem \ref{main}. Here one can see that $\mu^{\ast}_1$ and $\mu^{\ast}_3$ can be computed independently in parallel, and then re-injected in the problem to which $\mu^{\ast}_2$ is the solution.
One can add the following Stokes constraints:
\begin{eqnarray*}
\frac{\partial}{\partial x_1} \left[x_1 \: (1-x_1-x_2) \vphantom{\sum} \right] \; d\mu_1(x_1,x_2) &=& \frac{\partial}{\partial x_1} \left[ x_1 \: (1-x_1-x_2) \; d\mu_1(x_1,x_2) \right]\\
\frac{\partial}{\partial x_2} \left[x_2 \: (1-x_1-x_2) \vphantom{\sum} \right] \; d\mu_1(x_1,x_2) &=& \frac{\partial}{\partial x_2} \left[ x_2 \: (1-x_1-x_2) \; d\mu_1(x_1,x_2) \right]\\
\frac{\partial}{\partial x_3} \left[x_3 \: (1-x_3-x_4) \vphantom{\sum} \right] \; d\mu_3(x_3,x_4) &=& \frac{\partial}{\partial x_3} \left[ x_3 \: (1-x_3-x_4) \; d\mu_3(x_3,x_4) \right]\\
\frac{\partial}{\partial x_4} \left[x_4 \: (1-x_3-x_4) \vphantom{\sum} \right] \; d\mu_3(x_3,x_4) &=& \frac{\partial}{\partial x_4} \left[ x_4 \: (1-x_3-x_4) \; d\mu_3(x_3,x_4) \right].
\end{eqnarray*}
We can compute analytically
\[
\vol \: \mbK=\frac{5}{24}\simeq 0.2083.
\]

In Figure \ref{fig:4dprism} we compare the two sparse formulations with and without Stokes constraints. Surprisingly the linear formulation is faster than the branched one for small relaxation degrees{, most probably because at this level of precision the branching costs more in terms of constructing and parsing the LMIs than it saves in computational time}. When going deeper in the hierarchy we see the advantage of the branched formulation where more computations are done in parallel. As observed in the previous example however, the branched formulation seems to have problems to converge to the optimal value on an early relaxation. While the values of both formulations without Stokes constraints almost coincide, the values of the linear formulation with Stokes are strictly better than the ones of the accelerated branched formulation. This further supports our conjecture that formulations where Stokes constraints can be added at every step of the optimization program are to be preferred{: the fact that both configurations behave equally without Stokes constraints and that the branched configuration keeps a relaxation gap when implementing Stokes constraints suggests that these Stokes constraints behave better in linear configurations than in branched configurations. For this reason, in the 6D case where all possible configurations are branched, we could not completely eliminate the relaxation gap, while in this case where there is a linear configuration, the relaxation gap vanishes.}

\begin{figure}[!h]
\begin{subfigure}{0.5\textwidth}
\includegraphics[width=\textwidth]{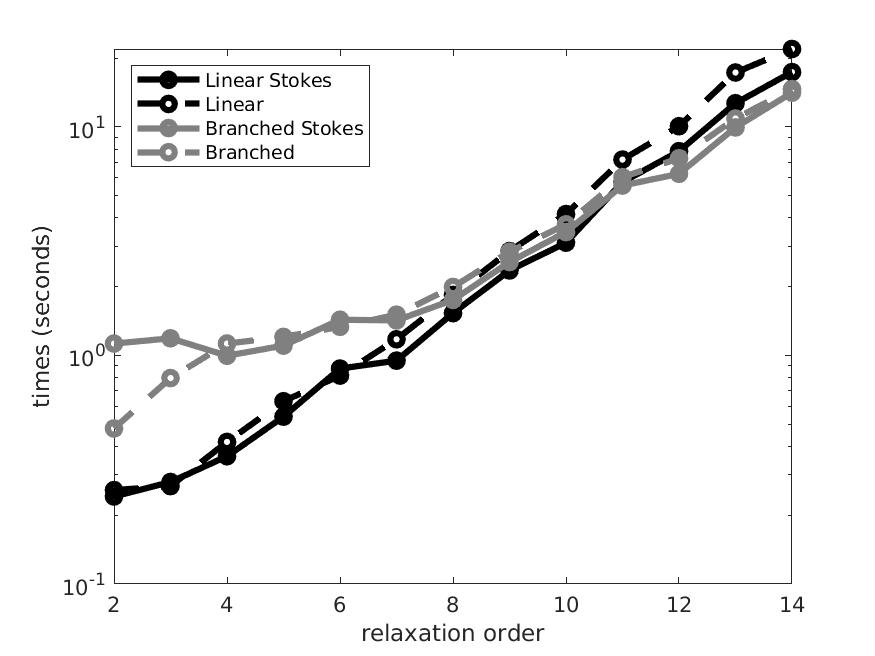}
\caption{Computation time vs relaxation order.}
\label{fig:4dprism_tim}
\end{subfigure}
\hfill
\begin{subfigure}{0.5\textwidth}
\includegraphics[width=\textwidth]{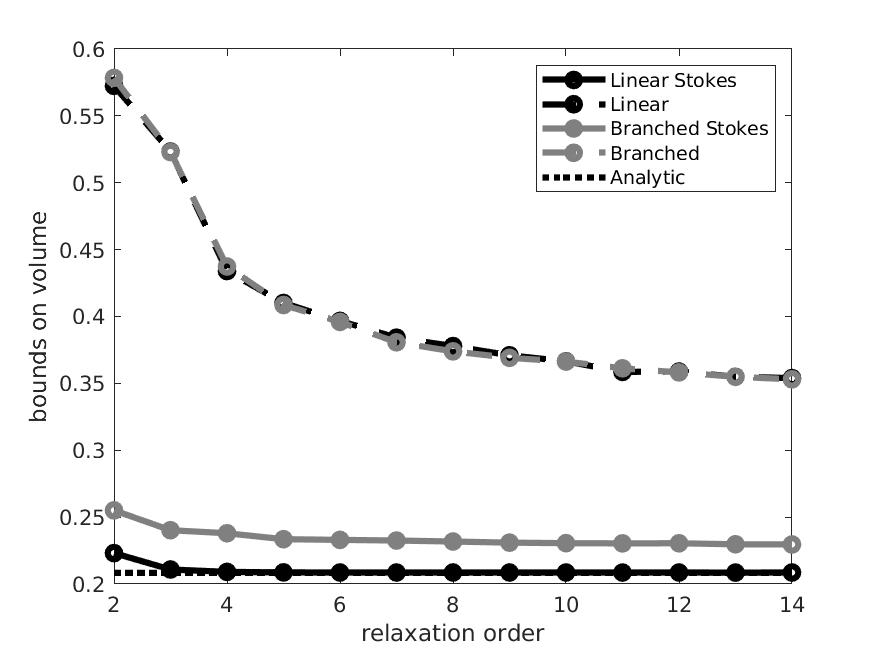}
\caption{Bounds on the volume vs relaxation order.}
\label{fig:4dprism_val}
\end{subfigure}
\caption{Performance for the 4D polytope.}
\label{fig:4dprism}
\end{figure}

\subsection{Discussion on the DIP assumption} \label{secdip}

It may happen that Assumption \ref{asm} does not hold, in which case all the above results would not apply.
For example one could think of the following set:
$$ \mbK := \left\{ \mbx \in \R^6 : x_1^2 + x_2^2 + x_3^2 \leq 1,\: x_i^2 + x_{i+2}^2 + x_{i+3}^2 \leq 1 , \:i=2,3  \right\}$$
whose correlative graph is represented on Figure \ref{counterex}. Here the DIP and CIP cannot be simultaneously enforced: the CIP would only be satisfied by a branched clique tree, but since all the cliques share common vertices, in such a branched tree there would automatically be sibling cliques with nonempty intersection. Also, one can notice that in this case (and, as far as we know, only in similar configurations where Assumption \ref{asm} is violated), the clique $C_2$ does not correspond to a polynomial appearing in the description of $\mbK$.

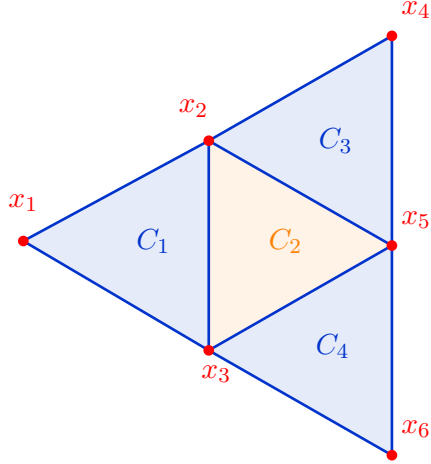
\begin{figure}[!h]
\begin{center}
\begin{tikzpicture}[line cap=round,line join=round,>=triangle 45,x=1.0cm,y=1.0cm]
\clip(-3,-3.5) rectangle (4,3.5);
\fill[line width=1pt,color=qqttcc,fill=qqttcc,fill opacity=0.1] (-2.18,0.06) -- (0.26,1.39) -- (0.26,-1.39) -- cycle;
\fill[line width=1pt,color=qqttcc,fill=qqttcc,fill opacity=0.1] (0.26,1.39) -- (2.67,2.78) -- (2.67,0) -- cycle;
\fill[line width=1pt,color=qqttcc,fill=qqttcc,fill opacity=0.1] (0.26,-1.39) -- (2.67,0) -- (2.67,-2.78) -- cycle;
\fill[line width=1pt,color=qqttcc,fill=orange,fill opacity=0.1] (0.26,-1.39) -- (2.67,0) -- (0.26,1.39) -- cycle;
\draw [line width=1pt,color=qqttcc] (-2.18,0.06)-- (0.26,1.39);
\draw [line width=1pt,color=qqttcc] (0.26,1.39)-- (0.26,-1.39);
\draw [line width=1pt,color=qqttcc] (0.26,-1.39)-- (-2.18,0.06);
\draw [line width=1pt,color=qqttcc] (0.26,1.39)-- (2.67,2.78);
\draw [line width=1pt,color=qqttcc] (2.67,2.78)-- (2.67,0);
\draw [line width=1pt,color=qqttcc] (2.67,0)-- (0.26,1.39);
\draw [line width=1pt,color=qqttcc] (0.26,-1.39)-- (2.67,0);
\draw [line width=1pt,color=qqttcc] (2.67,0)-- (2.67,-2.78);
\draw [line width=1pt,color=qqttcc] (2.67,-2.78)-- (0.26,-1.39);
\fill [color=ffqqqq] (0.26,1.39) circle (2.0pt);
\draw[color=ffqqqq] (0.06,1.82) node {$x_2$};
\fill [color=ffqqqq] (0.26,-1.39) circle (2.0pt);
\draw[color=ffqqqq] (0.36,-1.7) node {$x_3$};
\fill [color=ffqqqq] (-2.18,0.06) circle (2.0pt);
\draw[color=ffqqqq] (-2.18,0.57) node {$x_1$};
\draw[color=qqttcc] (-0.47,0.06) node {$C_1$};
\draw[color=orange] (1.27,0.06) node {$C_2$};
\fill [color=ffqqqq] (2.67,0) circle (2.0pt);
\draw[color=ffqqqq] (2.99,0.34) node {$x_5$};
\fill [color=ffqqqq] (2.67,2.78) circle (2.0pt);
\draw[color=ffqqqq] (2.99,3.13) node {$x_4$};
\fill [color=ffqqqq] (2.67,-2.78) circle (2.0pt);
\draw[color=ffqqqq] (2.99,-2.43) node {$x_6$};
\draw[color=qqttcc] (1.93,1.39) node {$C_3$};
\draw[color=qqttcc] (1.89,-1.33) node {$C_4$};
\end{tikzpicture}
\caption{A correlation graph that violates Assumption \ref{asm}.}
\label{counterex}
\end{center}
\end{figure}

First, we would like to emphasize that the core of this article is the linear computation theorem, for which the working assumption always holds. The branched generalizations are only consequences of this linear computation theorem.

Second, the fact that Assumption \ref{asm} does not hold is not a dead-end for using our scheme. In fact, even the simpler CIP might not hold, in which case one would need to perform a \textit{chordal extension}, which consists of adding virtual links between variables to construct a chordal graph. Basically, a chordal extension would make the graph chordal at the price of slightly weakening the correlative sparsity pattern. In general, the same can be performed to enforce Assumption \ref{asm}: one could add virtual links between variables to enforce the assumption to hold. For example, if one artificially links variables $x_3$ and $x_4$ in the correlation graph of $\mbK$, one obtains a new correlation graph, with an associated clique tree satisfying all our working assumptions (see Figure \ref{fixcounter}). This manipulation results in increasing the correlative sparsity from 3 to 4, which is a weakening of the correlative sparsity pattern. However, our framework still allows to reduce the dimensionality of the problem from 6 to 4.

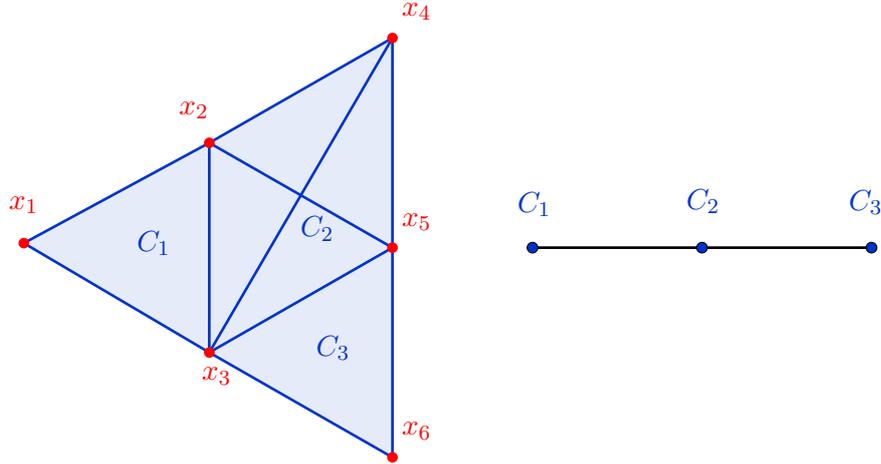
\begin{figure}[!h]
\begin{center}
\begin{tikzpicture}[line cap=round,line join=round,>=triangle 45,x=1.0cm,y=1.0cm]
\clip(-3,-3.5) rectangle (4,3.5);
\fill[line width=1pt,color=qqttcc,fill=qqttcc,fill opacity=0.1] (-2.18,0.06) -- (0.26,1.39) -- (0.26,-1.39) -- cycle;
\fill[line width=1pt,color=qqttcc,fill=qqttcc,fill opacity=0.1] (0.26,1.39) -- (2.67,2.78) -- (2.67,0) -- cycle;
\fill[line width=1pt,color=qqttcc,fill=qqttcc,fill opacity=0.1] (0.26,-1.39) -- (2.67,0) -- (2.67,-2.78) -- cycle;
\fill[color=qqttcc,fill=qqttcc,fill opacity=0.1] (0.26,1.39) -- (2.67,0) -- (0.26,-1.39) -- cycle;
\draw [line width=1pt,color=qqttcc] (-2.18,0.06)-- (0.26,1.39);
\draw [line width=1pt,color=qqttcc] (0.26,1.39)-- (0.26,-1.39);
\draw [line width=1pt,color=qqttcc] (0.26,-1.39)-- (-2.18,0.06);
\draw [line width=1pt,color=qqttcc] (0.26,1.39)-- (2.67,2.78);
\draw [line width=1pt,color=qqttcc] (2.67,2.78)-- (2.67,0);
\draw [line width=1pt,color=qqttcc] (2.67,0)-- (0.26,1.39);
\draw [line width=1pt,color=qqttcc] (0.26,-1.39)-- (2.67,0);
\draw [line width=1pt,color=qqttcc] (2.67,0)-- (2.67,-2.78);
\draw [line width=1pt,color=qqttcc] (2.67,-2.78)-- (0.26,-1.39);
\draw [line width=1pt,color=qqttcc] (0.26,-1.39)-- (2.67,2.78);
\fill [color=ffqqqq] (0.26,1.39) circle (2.0pt);
\draw[color=ffqqqq] (0.06,1.82) node {$x_2$};
\fill [color=ffqqqq] (0.26,-1.39) circle (2.0pt);
\draw[color=ffqqqq] (0.36,-1.7) node {$x_3$};
\fill [color=ffqqqq] (-2.18,0.06) circle (2.0pt);
\draw[color=ffqqqq] (-2.18,0.57) node {$x_1$};
\draw[color=qqttcc] (-0.47,0.06) node {$C_1$};
\fill [color=ffqqqq] (2.67,0) circle (2.0pt);
\draw[color=ffqqqq] (2.99,0.34) node {$x_5$};
\fill [color=ffqqqq] (2.67,2.78) circle (2.0pt);
\draw[color=ffqqqq] (2.99,3.13) node {$x_4$};
\fill [color=ffqqqq] (2.67,-2.78) circle (2.0pt);
\draw[color=ffqqqq] (2.99,-2.43) node {$x_6$};
\draw[color=qqttcc] (1.89,-1.33) node {$C_3$};
\draw[color=qqttcc] (1.68,0.25) node {$C_2$};
\end{tikzpicture}
\begin{tikzpicture}[line cap=round,line join=round,>=triangle 45,x=1.0cm,y=1.0cm]
\clip(-1.5,-3.5) rectangle (3.7,1.);
\draw [color=black,line width=1.pt] (-1.115168148756052,0.)-- (1.115168148756052,0.);
\draw [color=black,line width=1.pt] (1.115168148756052,0.)-- (3.3455044462681562,0.);
\draw [fill=qqttcc] (-1.115168148756052,0.) circle (2.0pt);
\draw[color=qqttcc] (-1.09,0.58) node {$C_1$};
\draw [fill=qqttcc] (1.115168148756052,0.) circle (2.0pt);
\draw[color=qqttcc] (1.13,0.62) node {$C_2$};
\draw [fill=qqttcc] (3.3455044462681562,0.) circle (2.0pt);
\draw[color=qqttcc] (3.27,0.6) node {$C_3$};
\end{tikzpicture}
\caption{A way to fix our counterexample.}
\label{fixcounter}
\end{center}
\end{figure}

\begin{rqe}
There might exist examples for which the DIP would only be obtained by completely destroying the sparsity pattern one wants to exploit. However, this could also happen with the more common CIP. In either case, an option might be to consider the dual problem of \eqref{general-lp-dual} as a way to find minimizing sequences $(v_k)_k$ that approximate $\ind_\mbK$, and to apply it to each one of the $\mbU_i := \{\mbx_i \in \mbX_i : \mbg_i(\mbx_i) \geq 0\}$ to obtain a sequence $\left(v_1^{(k)},\dots,v_m^{(k)}\right)_k$ such that $\left(v_i^{(k)}\right)_k$ approximates $\ind_{\mbU_i}$. Then, one would still have to prove that the convergence of the hierarchy is stable by product (which is nontrivial) to conclude that $\displaystyle\int \prod\limits_{i=1}^m v_i^{(k)}\circ\pi_{\mbX_i}(\mbx) \; d\mbx$ converges to $\vol \ \mbK$. The major drawback of this solution is that in order for $(v_k)_k$ to approximate $\ind_\mbK$, we cannot implement Stokes constraints, as they would modify problem \eqref{general-lp-dual} (see Remark \ref{rqstokes}) in a way that makes us lose the convergence to an indicator (this is precisely the point of Stokes constraints: they allow to obtain the volume without trying to approximate discontinuous functions with polynomials). However, we know that the volume approximation hierarchy has a bad convergence rate without Stokes constraints. In general, we should not expect any method that approximates indicators with polynomials to yield satisfactory results in terms of volume computation. Such a method should be considered only as a last resort if any trial to apply the above scheme fails. Finally, this option represents a framework that would be completely independent from the above and thus it remains out of the scope of this paper.
\end{rqe}

\color{black}

\section*{Conclusion}

\subsection*{Our results}

In this paper we addressed the problem of approximating the volume of sparse semi-algebraic sets with the Moment-SOS hierarchy of SDP relaxations.
As illustrated by our examples, our sparse formulation allows to dramatically decrease the computational time for each relaxation, and to tackle high dimensional volume computation problems that are not tractable with the usual SDP methods. By splitting the problems into low dimensional subproblems, one drastically reduces the dimension of each relaxation, without loss of precision. This reduction of complexity is due to the correspondance between the structure of our algorithm and the {correlative} sparsity pattern in the description of the semi-algebraic set.

We also showed that additional Stokes constraints have a huge effect on convergence and precision for volume computation, and that they can successfully be adapted to our sparse formulations. This yields a much better rate of convergence for the corresponding hierarchy. However, implementing these Stokes constraints leads to subtle constraints that have to be enforced if one wants to efficiently compute the volume:
\begin{itemize}
\item First, one should always prefer the linear formulation of  Theorem \ref{linco} whenever possible, since this ensures that Stokes constraints can always be efficiently implemented.
\item Then, in the more general case of Theorem \ref{main}, one should always avoid formulations in which the root at the computation tree has no Stokes constraint; fortunately, such configurations can always be avoided by chosing a leaf as the root of the clique tree.
\end{itemize}

Furthermore, in the branched case, one should be aware of the fact that each step of the algorithm introduces an approximation error, and the errors accumulate until the root is reached. Consequently, a formulation in which the clique tree has too many generations will lead to a larger global error than a formulation with less generations. For this reason, one should minimize the number of generations in the clique tree, which is equivalent to parallelizing as much as possible. In addition to that, when the problem has many dimensions and branches, parallelization can obviously drastically increase the speed of the computations.

\subsection*{Applications and future work}

To the best of our knowledge, this sparse method for solving volume problems is new and full of promises for future applications. For instance, the problem of computing the mass of any compactly supported measure absolutely continuous (with respect to the Lebesgue measure) can be adressed using this sparsity method. Also, measures that are not compactly supported but have some decay properties (e.g. Gaussian measures) can also be handled by our method, which may prove useful in computations for probability and statistics. Also, specific constraints could probably be used in addition to Stokes constraints when the semi-algebraic set presents a specific structure (e.g. a polytope, a convex body).

Furthermore, the framework of exploiting {correlative} sparsity can be applied to any method that relies on computations on measures, whether these measures are represented by their moments (as it is done in this paper), or by samples (as in the stochastic volume computation methods). In particular, we believe that this formalism could easily be extended to Monte-Carlo-based volume computations.

Finally, we also believe that this method can be adapted to the computation of regions of attraction, through the formalism developed in \cite{roa}, for high dimensional differential-algebraic systems that present a network structure, such as power grids, distribution networks in general and possibly other problems. The main difficulty resides in taking sparsity into account when formulating the Liouville equation, and keeping uniqueness of the solution as in the non-controlled non-sparse framework.

\section*{Acknowledgements}

We are grateful to an anonymous Reviewer for a deep insight that helped us significantly improve the initial version of this paper.
\color{black}

\appendix

%
%
%
%

\section{Disjoint Intersection Property}

\label{DIP}

\begin{defn}
Let $G = (V,E)$ be a graph with vertices set $V$ and edges set $E \subset V^2$. The following definitions can be found in e.g. \cite{graphs}:
\begin{itemize}
\item  The \textit{degree} $\deg v$ of $v \in V$ is the cardial of the set $\{w \in V : (v,w) \in E\}$ \textit{i.e.} the number of vertices connected to $v$. \color{black}
\item A \emph{clique} of $G$ is a subset of vertices $C \subset V$ such that $u,v \in  C$ implies $(u,v) \in E$.
\item A graph $G$ is \emph{chordal} if every cycle of length greater than 3 has a chord, i.e. an edge connecting two nonconsecutive vertices on the cycle.
\item A \emph{tree} $T=(\mcK,\mcE)$ is a graph without cycle.
\item {The \textit{treewidth} of a chordal graph is the size of its biggest clique minus 1. Thus the treewidth of a tree is 1 and the treewidth of a complete graph ($E = V^2$) of size $n$ is $n-1$.}
\item A \textit{rooted tree} is a tree in which one vertex has been designated the \textit{root}.
\item In a rooted tree, the \textit{parent} of a vertex $v$ is the vertex $w$ connected to it on the path to the root; $v$ is then called a \textit{child} of $w$; two vertices that have the same parent are called \textit{siblings}; a \textit{descendant} of a vertex $v$ is any vertex which is either the child of $v$ or is (recursively) the descendant of any of the children of $v$; $v$ is then called an \textit{ancestor} of itself and any of its descendants.
\item The vertices of a rooted tree can be partitioned between the root, the \textit{leaves} (the vertices that have parents but no children) and the \textit{branches} (that have children and parents).
\item Let $\mcK$ be the set of {maximal} cliques of $G$. A \emph{clique tree} $T = (\mcK,\mcE)$ of $G$ is a tree whose vertices are the {maximal} cliques of $G$.
\item A clique tree satisfies the clique intersection property (CIP) if for every pair of distinct cliques $C,C'\in\mcK$, the set $C \cap C'$ is contained in every clique on the path connecting $C$ and $C'$ in the tree. We denote by $\mcT^{ct}$ the set of clique trees of $G$ that satisfy the CIP.
\item {Let $A \subset V$. Then, the \textit{subgraph of $G$ generated by} $A$ is given by $ \langle A \rangle_G := (A,E \cap A^2)$.}
\end{itemize} 
\end{defn}

\begin{thm}
A connected graph $G$ is chordal if and only if $\mcT^{ct} \neq \varnothing$ if and only if $\mcK$ admits an ordering that satisfies the RIP.
\end{thm}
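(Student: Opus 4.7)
The plan is to establish the three-way equivalence by proving the cyclic chain of implications: $G$ chordal $\Rightarrow$ $\mcK$ admits a RIP ordering $\Rightarrow$ $\mcT^{ct} \neq \varnothing$ $\Rightarrow$ $G$ chordal. Each direction is standard in the clique-tree literature (see \cite{graphs}), and I would organize the argument so that the purely combinatorial content lives in the first and third implications, while the middle implication is a direct construction.

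For the first step, I would invoke the classical Fulkerson--Gross theorem that $G$ is chordal iff it admits a \emph{perfect elimination ordering} (PEO) $v_1,\ldots,v_n$, i.e. an ordering such that the higher-indexed neighbours of each $v_i$ induce a clique. This is typically proved by a maximum cardinality search or lexicographic BFS, and I would cite rather than reprove it. From a PEO I would then order the maximal cliques $C_1,\ldots,C_m$ by the index of their first vertex that does not belong to any previously listed clique (traversing the PEO in reverse), and check that the resulting ordering satisfies $C_i\cap\bigcup_{j<i}C_j\subset C_{k_i}$ for some $k_i<i$, which is the RIP.

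For the second step, given a RIP ordering with associated parents $k_i$, I would define the tree $T=(\mcK,\mcE)$ by $\mcE:=\{(C_{k_i},C_i):i=2,\ldots,m\}$. This graph has $m-1$ edges and is connected by construction, so it is a tree. To verify the CIP, I would proceed by induction on $i$: for any $j<i$, the RIP gives $C_i\cap C_j\subset C_{k_i}$, and by the inductive hypothesis applied to the subtree on $C_1,\ldots,C_{i-1}$, the intersection is already contained in every clique on the tree path from $C_j$ to $C_{k_i}$; adding the edge $(C_{k_i},C_i)$ extends the property to the path from $C_j$ to $C_i$. For the third step, I would argue by contradiction: suppose $G$ contains a chordless cycle $v_1v_2\cdots v_kv_1$ with $k\geq 4$, and for each edge $(v_\ell,v_{\ell+1})$ pick a maximal clique $\widetilde C_\ell$ containing it; the CIP forces $v_{\ell+1}$ to lie in every clique on the path from $\widetilde C_\ell$ to $\widetilde C_{\ell+1}$ in $T$. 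Concatenating these paths around the cycle and using that $T$ is acyclic, I would extract a single clique simultaneously containing two non-consecutive vertices $v_p,v_q$ of the cycle, producing the forbidden chord.

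The main obstacle will be the bookkeeping in the third implication: one needs to carefully choose a minimal counterexample cycle and then locate, along the concatenated tree path, a clique witnessing a chord. The subtle point is that the path closes up in the tree, so one must exploit acyclicity to pinpoint a vertex common to two cliques that correspond to non-adjacent points on the cycle. The first implication is conceptually non-trivial as well, but since the existence of a PEO for chordal graphs is a textbook result, I would treat it as a black box and focus the technical effort on the CIP-to-chordality direction.
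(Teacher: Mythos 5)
The first thing to note is that the paper does not prove this statement at all: it is quoted as a classical result on chordal graphs and clique trees, with \cite{graphs} as the reference, so there is no in-paper argument to compare yours against; I can only assess your plan on its own merits. Your global strategy (chordal $\Rightarrow$ RIP ordering $\Rightarrow$ $\mcT^{ct}\neq\varnothing$ $\Rightarrow$ chordal) is the standard one and is sound. The middle implication is complete as sketched: attaching each $C_i$ to $C_{k_i}$ gives a connected graph with $m-1$ edges, hence a tree, and your induction closes correctly because $C_i\cap C_j\subseteq C_i\cap\bigcup_{l<i}C_l\subseteq C_{k_i}$, so $C_i\cap C_j\subseteq C_j\cap C_{k_i}$, and the inductive CIP applied to the pair $(C_j,C_{k_i})$ propagates the containment along the path before you append the new edge. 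For the first implication, citing Fulkerson--Gross for the existence of a perfect elimination ordering is legitimate, but be aware that the passage from a PEO to a RIP ordering of the maximal cliques is itself a lemma requiring the simpliciality property (this is essentially Blair--Peyton's maximum cardinality search argument); the phrase ``check that the resulting ordering satisfies the RIP'' is where that work hides, and it should not be presented as a mere verification.

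The genuine gap is in the third implication. Concatenating the tree paths between the cliques $\widetilde C_\ell$ and invoking acyclicity of $T$ does not by itself yield a clique containing two non-consecutive cycle vertices: in a closed walk in a tree every edge is traversed an even number of times, but the repeated traversals may all come from paths associated with \emph{consecutive} cycle vertices, which produces no chord, so the ``bookkeeping'' you defer is exactly where the proof can fail. A clean way to close it is the following. First, the CIP implies that for every vertex $v$ the set $T_v$ of cliques containing $v$ induces a subtree of $T$, and chordlessness of the cycle $v_1v_2\cdots v_kv_1$ ($k\geq 4$) makes $T_{v_p}$ and $T_{v_q}$ disjoint whenever $v_p,v_q$ are non-consecutive, while consecutive ones intersect (they share a clique containing the edge). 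Next, since $T_{v_1}$ and $T_{v_3}$ are disjoint subtrees of the tree $T$, there is an edge $e=\{a,b\}$ of $T$ whose removal leaves them in different components $A\ni T_{v_1}$ and $B\ni T_{v_3}$. The subtree $T_{v_2}$ meets both $T_{v_1}$ and $T_{v_3}$, hence meets both components, hence (being connected) contains both $a$ and $b$. Finally, along the chain $T_{v_3},T_{v_4},\ldots,T_{v_k},T_{v_1}$ consecutive subtrees intersect, so if each lay wholly in $A$ or wholly in $B$ they would all lie on the same side, contradicting $T_{v_3}\subseteq B$ and $T_{v_1}\subseteq A$; therefore some $T_{v_j}$ with $4\leq j\leq k$ also contains both $a$ and $b$. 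The clique $a$ then contains $v_2$ and $v_j$, which are non-consecutive on the cycle, producing the forbidden chord. With this replacement (or with the alternative textbook induction via a simplicial vertex found in a leaf clique of $T$), your outline becomes a complete proof.
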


\begin{defn}
Let $G = (V,E)$ be chordal {and connected}.
Let $T = (\mcK,\mcE) \in \mcT^{ct}$ be a clique tree rooted in $C_1 \in \mcK$.
$T$ satisfies the \emph{Disjoint Intersection Property} (DIP) if $\forall C,C',C'' \in \mcK$, 
if $(C,C') \in \mcE$ and 
$(C,C'') \in \mcE$ then $C' = C''$ or $C' \cap C'' = \varnothing$.
In words, each clique has 
an empty intersection with all its siblings.
\end{defn}

We are now going to give a systematic way to enforce Assumption \ref{asm} and generate the associated clique trees. Let $(\mbg_1,\dots,\mbg_m)$ be a correlatively sparse family of polynomial vectors with a connected chordal correlation graph $G = (V,E)$. Let $\mcK$ be the set of maximal cliques of $G$. We construct the \textit{clique graph} $G_\mcK = (\mcK,\mcF)$ such that $(C,C') \in \mcF$ \textit{iff} $C \cap C' \neq \varnothing$. One can in turn define cliques (called \textit{meta-cliques}) for this new graph, and its correlative sparsity $CS_\mcK$ is the size of its biggest maximal meta-clique minus $1$. One can note that any clique tree is a subtree of $G_\mcK$ including all its vertices.

\begin{rqe}
If $G_\mcK$ itself is a tree (as in section \ref{ex1}), then it trivially satisfies the DIP and CIP, and Assumption \ref{asm} automatically holds.
\end{rqe}

%

\begin{lem} \label{ancestor} Let $T = (\mcK,\mcE)$ be a clique tree satisfying Assumption \ref{asm}.
\begin{itemize}
\item[1)] Let $C,C' \in \mcK$ such that $C \neq C'$ and $C \cap C' \neq \varnothing$. Then, up to permuting them, $C$ is a descendant of $C'$.
\item[2)] Let $K$ be a meta-clique. Then, $\langle K \rangle_{T}$ is an oriented path of $T$: the elements of $K$ are ancestor to one another and each $C \in K$ has its parent in $K$ except one of them.
\end{itemize}
\end{lem}
\begin{proof}
\begin{itemize}
\item[]
\item[1)] Let $C''$ be the last common ancestor of $C$ and $C'$, meaning that $C''$ is an ancestor of both $C$ and $C'$ but any child of $C''$ is the ancestor of at most one of them. Such ancestor exists since the root $C_1$ is a common ancestor to $C$ and $C'$. Then, $C''$ is on the path between $C$ and $C'$. Since $C \neq C'$, up to permuting them, we can suppose that $C \neq C''$.

By contradiction, we suppose that $C' \neq C''$. Then, let $\hat{C}$ be the child of $C''$ that is also the ancestor of $C$, and $\tilde{C}$ the child of $C''$ that is also the ancestor of $C'$. Both exist since $C''$ is an ancestor of $C$ and $C'$ and $C \neq C' \neq C''$. $C''$ being the latest common ancestor of $C$ and $C'$, we deduce that $\hat{C} \neq \tilde{C}$ so that $\hat{C}$ and $\tilde{C}$ are siblings. Then, the DIP ensures that $\hat{C} \cap \tilde{C} = \varnothing$. However, $\hat{C}$ and $\tilde{C}$ are on the path between $C$ and $C'$, so according to the CIP they both contain $C \cap C'$ which is nonempty. This is a contradiction.
\item[2)] According to point 1), all elements of $K$ are descendants of one another, so that they are all on the same path in $T$. We only have to show that any $C$ between two elements $C',C''$ of $K$ on this path is also an element of $K$. Indeed, let $C''' \in K$. Then, up to a permutation on $\{C',C'',C'''\}$ we can suppose that the unoriented path includes in this order: $(C',C,C'',C''')$. Then, $C$ is on the path between $C'$ and $C'''$, so the CIP implies that $C \supset C' \cap C'''$ is nonempty (since $C'$ and $C'''$ belong to the same clique $K$), and then $C$ has a nonempty intersection with $C'''$. This shows that $C$ has a nonempty intersection with any element of $K$, which by maximality of $K$ is the definition of $C \in K$.
\end{itemize}
\end{proof}

\begin{cor} If $G_\mcK$ is a complete graph (all pairs of maximal cliques have nonempty intersection as in section \ref{ex2}), then the only candidates for our clique tree are linear clique trees. In such case, Assumption \ref{asm} is equivalent to the existence of a reordering of $(\mbg_1,\dots,\mbg_m)$ such that Assumption \ref{ass-1b} holds.
\end{cor}

We now give an alogrithm to generate a clique tree $T = (\mcK,\mcE)$ that satisfies the DIP and the CIP. In case Assumption \ref{asm} does not hold, this algorithm automatically adds edges to $E$ until it finds an appropriate clique tree (see Algorithm \ref{algo}).

\begin{algorithm}
\SetAlgoLined
\caption{How to build an appropriate clique tree}
\label{algo}
\KwData{$G = (V,E)$ and its clique graph $G_\mcK = (\mcK,\mcF)$.}
\KwResult{$T = (\mcK,\mcE)$ satisfying CIP \& DIP if Assumption \ref{asm} holds, $G = (V,E)$ with additional edges else.}

Initialization: Choose $C_1 \in \mcK$ with minimal degree in $G_\mcK$\;
\qquad Initialize $i = j = k = 1$, $\mcP_1 := \{C_1\}$, $\mcE_1 := \varnothing$\;
\While {$k < |\mcK|$\label{whilk}}{
	\While {$\mathbb{M}_{ik} := \{K \text{\emph{ maximal meta-clique }} : K \cap \mcP_k^c \neq \varnothing, C_i \in K \} \neq \varnothing$\label{whili}}{
		Choose $K_j \in \underset{K \in \mathbb{M}_{ik}}{\argmax}|K|$\label{optclik}\;
		\While {$K_j \cap \mcP_k^c \neq \varnothing$\label{whilj}}{
		$l := \max \{r \leq k : C_r \in K_j\}$\;
			\eIf{$\exists (C, C') \in \argmax\{|\hat{C} \cap C_l| : (\tilde{C} , \hat{C}) \in \mcP_k \times K_j\cap\mcP_ k^c, \tilde{C} \cap \hat{C} \nsubseteq C_l\}$\label{ifcip}}{
				\eIf{$|C_l \cup C| > |C_l \cup C'|$\label{optcip}}{
					\lForEach{$v \in C',w \in C_l$}{$E \leftarrow E \cup \{(v,w),(w,v)\}$}
					\Return $G = (V,E)$ \;
					}{
					\lForEach{$v \in C,w \in C_l$}{$E \leftarrow E \cup \{(v,w),(w,v)\}$}
					\Return $G = (V,E)$ \;
				}
				}{Choose $C_{k+1} \in \underset{C \in K_j \cap \mcP_k^c}{\argmax}|C \cap C_l|$\label{optint}
			}
			\eIf{$\exists C \in \mcP_k$ s.t. $(C_l,C) \in \mcE_k$ \& $C \cap C_{k+1} \neq \varnothing$ \label{ifdip}}{
				\uIf{$|C \cup C_{k+1} < |C_l \cup C_{k+1}| \wedge |C_l \cup C|$\label{optdip}}{
					\lForEach{$v \in C,w \in C_{k+1}$}{$E \leftarrow E \cup \{(v,w),(w,v)\}$}
					\Return $G = (V,E)$ \;
					}
					\uElseIf{$|C_l \cup C| > |C_l \cup C_{k+1}|$}{
					\lForEach{$v \in C_{k+1},w \in C_l$}{$E \leftarrow E \cup \{(v,w),(w,v)\}$}
					\Return $G = (V,E)$ \;
					}
					\Else{
					\lForEach{$v \in C,w \in C_l$}{$E \leftarrow E \cup \{(v,w),(w,v)\}$}
					\Return $G = (V,E)$ \;
				}
				}{
				$\mcP_{k+1} := \mcP_k \cup \{C_{k+1}\}$\;
				$\mcE_{k+1} := \mcE_k \cup \{(C_l,C_{k+1})\}$\;
				$k \leftarrow k+1$ \;
			}
		}
		$j \leftarrow j+1$ \;
	}
	$i \leftarrow i+1$ \;
}
\Return $T := (\mcP_{|\mcK|},\mcE_{|\mcK|})$\;
\end{algorithm}

\begin{rqe}Algorithm \ref{algo} deserves some explanations:
\begin{itemize}
\item Minimizing $\deg C_1$ is a way to ensure Stokes constraints will be fully implementable, in contrast to the 2 step implementation in section \ref{ex1}.
\item Index $i$ denotes a clique that has already been added to the tree; the algorithm adds to the tree every clique that shares vertices with $C_i$, and then increments $i$.
\item Index $j$ denotes the meta-clique in which we are working; according to Lemma \ref{ancestor}, in such meta-clique the cliques should be added in line.
\item Index $k$ denotes the number of elements that have already been added to the tree; when $k$ is equal to the number of cliques, our clique tree is complete.
\item Index $l$ denotes the lates clique of the meta-clique $K_j$ that has been added to the tree; according to Lemma \ref{ancestor}, $C_l$ should then be the parent of $C_{k+1}$.
\item At line \ref{optclik} we maximize $|K_j|$ to favor linear configurations as they are the most compatible with Stokes constraints.
\item The if loop at line \ref{ifcip} checks whether it is possible to add the remaining cliques of $K_j$ to our tree without destroying the CIP.
\item The if loop at line \ref{ifdip} checks whether the clique we want to add destroys the DIP or not.
\item At line \ref{optint} we maximize $|C_{k+1} \cap C_l|$ so that it is less likely to pose problems with CIP and DIP in the future iterations.
\item The if loops at lines \ref{optcip} and \ref{optdip} are meant to minimize the correlative sparsity of the new graph $G = (V,E)$, since it is the limiting factor for the tractability of our algorithm.
\end{itemize}
\end{rqe}

\begin{thm}
Any clique tree returned by Algorithm \ref{algo} satisfies the DIP and the CIP.
\end{thm}
\begin{proof}
We are going to show by induction that at any step $k$, the graph $T_k := (\mcP_k,\mcE_k)$ is a tree that satisfies the CIP and the DIP. First, it is trivial that $T_1 = (\{C_1\},\varnothing)$ is a tree and satisfies the CIP and the DIP. Next, we suppose that we have constructed a tree $T_k$ satisfying CIP and DIP through iterations of our algorithm, and that the next iteration leads us to define a $T_{k+1} := (\mcP_k \cup \{C_{k+1}\}, \mcE_k \cup \{(C_l,C_{k+1}\})$. Since the only edge we added connected $C_l$ to a new vertex that was not in $T_k$, it did not introduce any cycle, thus $T_{k+1}$ is still a tree. We are now going to check the CIP and DIP.
\begin{itemize}
\item Let $C,C''\in\mcP_{k+1} ; C \cap C'' \neq \varnothing$. Let $C' \in \mcP_k$ be on the path between $C$ and $C''$ in $T_{k+1}$ ($C' \neq C_{k+1}$ because $C_{k+1}$ is on no path in $T_{k+1}$).
\begin{itemize}
\item If $C,C'' \in \mcP_k$ then by our induction hypothesis $C \cap C'' \subset C'$.
\item Else, without loss of generality we have $C'' = C_{k+1}$ and $C \in \mcP_k$.
\begin{itemize}
\item Since we successfully passed through the if loop of line \ref{ifcip}, we have $C \cap C_{k+1} \subset C_l$.
\item By our induction assumption ($T_k$ satisfies the CIP), we have $C \cap C_l \subset C'$ (because either $C' = C_l$ or $C'$ is on the path between $C$ and $C_l$, the parent of $C_{k+1}$).
\end{itemize}
This yields $C' \supset C \cap C_l \supset C \cap (C \cap C_{k+1}) = C \cap C_{k+1} = C \cap C''$.
\end{itemize}
Then, $T_{k+1}$ satisfies the CIP.
\item Let $C \in \mcP_k,C',C''\in\mcP_{k+1}$ such that $(C,C'),(C,C'')\in\mcE_{k+1}$.
\begin{itemize}
\item If $C',C'' \in \mcP_k$ then by our induction hypothesis $C' \cap C'' = \varnothing$.
\item Else, without loss of generality we have $C'' = C_{k+1}, C=C_l$, and since we successfully passed through the if loop of line \ref{ifdip}, we have $C' \cap C'' = C' \cap C_{k+1} = \varnothing$.
\end{itemize}
Then, $T_{k+1}$ satisfies the DIP.
\end{itemize}
\end{proof}

Finally, we conjecture that if Assumption \ref{asm} holds, then Algorithm \ref{algo} will directly return a clique tree satisfying the CIP and the DIP without adding any edge to $G$.

\section{On Monte Carlo simulations}
\label{app:MC}
We describe the very basic Monte Carlo approach used in section \ref{sec:highnonconvex}. Let $\xi_1,\ldots,\xi_N$ be i.i.d. samples from some law $\mu$. In our case $\mu$ is the uniform distribution on $[0,1]^n$. Further let $f$ be a function from the probability space into $\{0,1\}$. Again, in our case, $f$ would return $1$ if the sample $\xi_i$ is in the set $\mbK$, and $0$ else. By the strong law of large numbers
\[
X_N:= \frac{1}{N}\sum_{i=1}^N f(\xi_i) \to \int f d \mu \text{ for } N \to \infty .
\]
This makes $X_N$ a reasonable guess if $N$ is large. However, $X_N$ is still a guess, and it might happen  that $X_N$ is actually far away from the approximated quantity.

As a consequence of the Central Limit Theorem, the difference $X_N-\int f d \mu$ behaves (almost) like a normal distributed random variable with zero mean and variance $\sigma^2/N$ where $\sigma^2 = \int (f -\int f d \mu)^2 d \mu$. Note that the variance $\sigma^2$ can also be estimated based on the i.i.d. samples $\xi_1,\ldots,\xi_N$:
\[
S_N^2 := \frac{1}{N-1}\sum_{i=1}^N(\xi_i - X_N)^2.
\]
This allows to define a confidence interval for the approximated volume. Indeed, say we are interested in a 99\%-confidence interval. Then for $G$ a standard normal distributed random variable, we have $P(|G|< 2.58) \approx 0.99$ and consequently,
\[
P\left(X_N -\frac{2.58 S_N}{\sqrt{N}} \leq \int f d \mu \leq X_N +\frac{2.58 S_N}{\sqrt{N}}\right) \approx 0.99.
\]

\color{black}


\begin{thebibliography}{las}

\bibitem{bollobas}
B. Bollob\'as. Volume estimates and rapid mixing. Pages 151--180 in Flavors of geometry, MSRI Publ. 31, Cambridge University Press, 1997.
\bibitem{dyer1}
M. E. Dyer, A. M. Frieze. On the complexity of computing the volume of a polyhedron. SIAM J. Comput. 17:967--974, 1988.
\bibitem{elekes}
G. Elekes. A geometric inequality and the complexity of measuring the volume. Discrete Comput. Geom. 1:289--292, 1986.
\bibitem{sirev}
D. Henrion, J. B. Lasserre, C. Savorgnan. Approximate volume and integration for basic semialgebraic sets. SIAM Review 51(4):722--743, 2009.
\bibitem{bueler}
B. B\"ueler, A. Enge, K. Fukuda. Exact volume computation for polytopes: a practical
study. Pages 131--154 in Polytopes: combinatorics and computation, G. Kalai and G. M. Ziegler (Eds.), Birkh\"auser, 2000.
\bibitem{dyer2}
M. E. Dyer, A. M. Frieze, R. Kannan. A random polynomial-time algorithm for approximating the volume of convex bodies, J. ACM 38:1--17, 1991.
\bibitem{belisle1}
C. Belisle. Slow hit-and-run sampling, Statist. Probab. Lett. 47:33--43, 2000.
\bibitem{smith1}
C. Belisle, E. Romeijn, R. L. Smith. Hit-and-run algorithms for generating multivariate distributions. Math. Oper. Res. 18:255--266, 1993.
\bibitem{smith2}
R. L. Smith. Efficient Monte Carlo procedures for generating points uniformly distributed over bounded regions. Oper. Res. 32:1296--1308, 1984.
\bibitem{vempala}
B. Cousins, S. Vempala. A practical volume algorithm. Math. Program. Comput. 8:133--160, 2016.
\bibitem{lass-book1}
J. B. Lasserre. Moments, positive polynomials and their applications. Imperial College Press, 2010.
\bibitem{aam}
J. B. Lasserre. Computing Gaussian and exponential measures of semi-algebraic sets.  Adv. Appl. Math. 91:137--163, 2017.
\bibitem{roa}
D. Henrion, M. Korda. Convex computation of the region of attraction of polynomial control systems. IEEE Trans. Autom. Control 59(2):297-312, 2014.
\bibitem{josz}
C. Josz, D.K. Molzahn, M. Tacchi, S. Sojoudi. Transient stability analysis of power systems via occupation measures. Innovative Smart Grid Technologies, 2019.
\bibitem{waki}
{
H. Waki, S. Kim, M. Kojima, M. Muramatsu. Sums of Squares and Semidefinite Program Relaxations for Polynomial Optimization Problems with Structured Sparsity. SIAM J. Optim. 17(1):218–242, 2006.
}
\bibitem{diego}
{
D. Cifuentes, P. A. Parrilo. Exploiting Chordal Structure in Polynomial Ideals: A Gröbner Bases Approach. SIAM J. Discrete Math.
30(3):1534–1570, 2016.
}
\bibitem{siam-sparse}
J. B. Lasserre. Convergent SDP relaxations in polynomial optimization with sparsity. SIAM J. Optim. 17:822--843, 2006. 
\bibitem{glopti}
D. Henrion, J. B. Lasserre, J. Loefberg. GloptiPoly 3: moments, optimization and semidefinite programming. Optimization Methods and Software 24(4-5):761-779, 2009.
\bibitem{graphs}
J. R. S. Blair, B. Peyton. An introduction to chordal graphs and clique trees. Pages 1--29 in {Graph Theory and Sparse Matrix Computation}, Springer, 1993.
\bibitem{stocon}
J. B. Lasserre. Representation of chance-constraints with strong asymptotic guarantees. {IEEE Control Systems Letters} 1(1):50--55, 2017.
\bibitem{stokes}
J. H. Hubbard, B. Burke Hubbard. Vector calculus, linear algebra, and differential forms - A unified approach. 2nd Ed., Prentice Hall, 2002.
\bibitem{polytope}
R. Stanley, I. G. Macdonald, R. B. Nelsen. Solution of elementary problem E2701. {The American Mathematical Monthly} 86(5):396, 1979.
\bibitem{convergence}
C. Josz, D. Henrion. Strong duality in Lasserre's hierarchy for polynomial optimization. Optim. Lett. 10:3-10, 2016.
 \end{thebibliography}
\end{document}